\title{The provability logic of all provability predicates}
\author{Taishi Kurahashi\footnote{Email: kurahashi@people.kobe-u.ac.jp}
\footnote{Graduate School of System Informatics, Kobe University, 1-1 Rokkodai, Nada, Kobe 657-8501, Japan.}}
\date{}
\theoremstyle{plain}
\newtheorem{thm}{Theorem}[section]
\newtheorem{prop}[thm]{Proposition}
\newtheorem{cor}[thm]{Corollary}
\newtheorem{fact}[thm]{Fact}
\newtheorem{prob}[thm]{Problem}
\newtheorem{cl}[thm]{Claim}
\theoremstyle{definition}
\newtheorem{defn}[thm]{Definition}
\newcommand{\PA}{\mathsf{PA}}
\newcommand{\PR}{\mathrm{Pr}}
\newcommand{\Prf}{\mathrm{Prf}}
\newcommand{\Prov}{\mathrm{Prov}}
\newcommand{\Proof}{\mathrm{Proof}}
\newcommand{\Con}{\mathrm{Con}}
\newcommand{\gn}[1]{\ulcorner#1\urcorner}
\newcommand{\D}[1]{\mathbf{D#1}}
\newcommand{\num}{\overline}
\newcommand{\Fml}{\mathrm{Fml}_{\mathcal{L}_A}}
\newcommand{\LA}{\mathcal{L}_A}
\newcommand{\LB}{\mathcal{L}(\Box)}
\newcommand{\GL}{\mathsf{GL}}
\newcommand{\GR}{\mathsf{GR}}
\newcommand{\K}{\mathsf{K}}
\newcommand{\KD}{\mathsf{KD}}
\newcommand{\N}{\mathsf{N}}
\newcommand{\NR}{\mathsf{NR}}
\newcommand{\NF}{\mathsf{N4}}
\newcommand{\NRF}{\mathsf{NR4}}
\newcommand{\Sub}{\mathsf{Sub}}
\newcommand{\MF}{\mathsf{MF}}
\newcommand{\PL}{\mathsf{PL}}
\begin{document}

\maketitle

\begin{abstract}
We prove that the provability logic of all provability predicates is exactly Fitting, Marek, and Truszczy\'nski's pure logic of necessitation $\N$. 
Moreover, we introduce three extensions $\NF$, $\NR$, and $\NRF$ of $\N$ and investigate the arithmetical semantics of these logics. 
In fact, we prove that $\NF$, $\NR$, and $\NRF$ are the provability logics of all provability predicates satisfying the third condition $\D{3}$ of the derivability conditions, all Rosser provability predicates, and all Rosser provability predicates satisfying $\D{3}$, respectively. 
\end{abstract}

\section{Introduction}

Let $T$ be a consistent primitive recursively axiomatized $\LA$-theory containing Peano Arithmetic $\PA$, where $\LA$ is the language of first-order arithmetic. 
G\"odel's second incompleteness theorem states that if a provability predicate $\PR_T(x)$ of $T$ satisfies the following two conditions $\D{2}$ and $\D{3}$, then the consistency statement $\neg \PR_T(\gn{0=1})$ of $T$ cannot be proved in $T$: for any $\LA$-sentences $\varphi$ and $\psi$,  
\begin{description}
	\item [$\D{2}$:] $T \vdash \PR_T(\gn{\varphi \to \psi}) \to \bigl(\PR_T(\gn{\varphi}) \to \PR_T(\gn{\psi}) \bigr)$. 
	\item [$\D{3}$:] $T \vdash \PR_T(\gn{\varphi}) \to \PR_T(\gn{\PR_T(\gn{\varphi})})$. 
\end{description}
In particular, any conventional provability predicate $\Prov_T(x)$ of $T$, which naturally expresses that $x$ is the G\"odel number of a $T$-provable formula, satisfies $\D{2}$ and $\D{3}$. 
Therefore, $T \nvdash \neg \Prov_T(\gn{0=1})$ holds. 

Every provability predicate $\PR_T(x)$ is thought of as a kind of modality, and modal logical study of provability predicates has been developed. 
For each provability predicate $\PR_T(x)$ of $T$, the set of all $T$-verifiable modal formulas under the interpretation where $\Box$ is interpreted by $\PR_T$ is called the \textit{provability logic} of $\PR_T(x)$. 
The most striking result of this study is Solovay's arithmetical completeness theorem \cite{Sol} stating that if $T$ is $\Sigma_1$-sound, then the provability logic of $\Prov_T(x)$ is exactly the G\"odel--L\"ob modal logic $\GL$. 
There has been a wide variety of studies on provability logics such as uniform arithmetical completeness theorem, classification of provability logics, quantified provability logics, bimodal and polymodal provability logics, interpretability logics, and so on. 
See \cite{AB, Boo, JD, Smo} for details of these studies. 

On the other hand, not all provability logics are exactly $\GL$. 
In particular, there exist $\Sigma_1$ provability predicates for which the second incompleteness theorem does not hold. 
A typical example of such a provability predicate is the one that was essentially introduced by Rosser \cite{Ros}. 
Let $\PR_T^{\mathrm{R}}(x)$ be a Rosser provability predicate of $T$ saying that there exists a $T$-proof $y$ of $x$ such that there is no $T$-proof of the negation of $x$ less than $y$. 
It is known that $\neg \PR_T^{\mathrm{R}}(\gn{0=1})$ is provable in $\PA$. 
Hence, the provability logic of $\PR_T^{\mathrm{R}}(x)$ is completely different from $\GL$ because it contains the modal formula $\neg \Box \bot$ that is inconsistent with $\GL$. 
Also, by the proof of the second incompleteness theorem, $\PR_T^{\mathrm{R}}(x)$ does not satisfy at least one of the conditions $\D{2}$ and $\D{3}$. 
And, it has been shown that whether or not $\PR_T^{\mathrm{R}}(x)$ satisfies either $\D{2}$ or $\D{3}$ depends on the details of construction of $\PR_T^{\mathrm{R}}(x)$.
In other words, whether the corresponding provability logic contains either $\Box (A \to B) \to (\Box A \to \Box B)$ or $\Box A \to \Box \Box A$ depends on the choice of $\PR_T^{\mathrm{R}}(x)$.
Indeed, Bernardi and Montagna \cite{BM} and Arai \cite{Ara} proved that there exists a Rosser provability predicate $\PR_T^{\mathrm{R}}(x)$ of $T$ satisfying $\D{2}$, and hence such a predicate does not satisfy $\D{3}$. 
Arai also proved the existence of a Rosser provability predicate satisfying $\D{3}$. 
Observe that the provability logics of Rosser provability predicates satisfying $\D{2}$ contain the normal modal logic $\KD$. 
The author proved in \cite{Kur20} that there exists a Rosser provability predicate whose provability logic is exactly $\KD$.

Provability predicates that are not $\Sigma_1$ whose provability logics are different from $\GL$ have also been studied. 
For example, the author proved in \cite{Kur18_1} that there exists a $\Sigma_2$ provability predicate of $T$ whose provability logic is exactly the weakest normal modal logic $\K$. 
Also, for several normal modal logics, the existence of corresponding $\Sigma_2$ provability predicates has been shown (cf.~\cite{Kur18_2,Sha94,Vis}).

In previous studies, all unimodal logics that have been considered as provability logics are normal, that is, containing the logic $\K$. 
Provability predicates corresponding to such logics always satisfy the condition $\D{2}$. 
In general, however, not all provability predicates satisfy $\D{2}$. 
For example, Rosser provability predicates satisfying $\D{3}$, whose existence was proved by Arai, do not satisfy $\D{2}$. 
The provability logics corresponding to such predicates are non-normal. 

In the present paper, we discuss non-normal provability logics through the following questions: 
\begin{itemize}
	\item [Q1] What is the intersection of all provability logics, that is, the provability logic of all provability predicates?
	\item [Q2] What is the provability logic of all Rosser provability predicates?
\end{itemize}

The property common to all provability predicates is ``$T \vdash \varphi \Rightarrow T \vdash \PR_T(\gn{\varphi})$'' that corresponds to the closure under the Necessitation rule $\dfrac{A}{\Box A}$, and presumably no other. 
The non-normal modal logic $\N$, obtained by adding Necessitation $\dfrac{A}{\Box A}$ as an inference rule to classical propositional logic, was introduced by Fitting, Marek, and Truszczy\'nski \cite{FMT}. 
In that paper, $\N$ is called the \textit{pure logic of necessitation}.
This logic $\N$ is our candidate for the answer to Q1, but a problem arises. 
The usual proof of Solovay's theorem is to embed Kripke models into arithmetic, and similar techniques have been used in the proofs of the previously mentioned results for various normal modal logics. 
On the other hand, since the logic $\N$ is not a normal modal logic, $\N$ does not have Kripke semantics. 
However, Fitting, Marek, and Truszczy\'nski introduced a Kripke-like relational semantics corresponding to $\N$, and the soundness, completeness, and finite frame property of $\N$ with respect to that semantics were proved. 
Then, we can attempt to apply Solovay's method to that semantics.
Indeed, in Section \ref{Sec:N}, we prove that $\N$ is exactly the provability logic of all provability predicates. 
This is the answer to Q1. 
Moreover, we actually prove more: There exists a $\Sigma_1$ provability predicate of $T$ whose provability logic is exactly $\N$. 

Shavrukov \cite{Sha91} introduced the bimodal logic $\GR$ of the standard and Rosser provability predicates and proved the arithmetical completeness theorem for $\GR$. 
Thus, the unimodal fragment $L^R$ of $\GR$ is the answer to Q2, but no specific axiomatization of the logic $L^R$ has been made so far. 
In Section \ref{Sec:Compl}, we introduce the logic $\NR$ that is obtained from $\N$ by adding the inference rule $\dfrac{\neg A}{\neg \Box A}$. 
We prove the finite frame property of $\NR$ with respect to Fitting, Marek, and Truszczy\'nski's semantics. 
In Section \ref{Sec:NR}, we prove that $\NR$ precisely coincides with $L^R$ and that $\NR$ is exactly the provability logic of all Rosser provability predicates. 
This is the answer to Q2. 

Furthermore, in the present paper, we deal with provability predicates satisfying the condition $\D{3}$. 
In Section \ref{Sec:Compl}, we also introduce the logics $\NF$ and $\NRF$ that are obtained from $\N$ and $\NR$ by adding the axiom scheme $\Box A \to \Box \Box A$, respectively. 
We prove the finite frame property of $\NF$ and $\NRF$. 
Also, in Sections \ref{Sec:NF} and \ref{Sec:NRF}, we prove that $\NF$ and $\NRF$ are exactly the provability logics of all provability predicates satisfying $\D{3}$ and all Rosser provability predicates satisfying $\D{3}$, respectively.

In Appendix 1, as a related topic, we prove the existence of a $\Sigma_1$ provability predicate whose provability logic is exactly $\K$. 
In Appendix 2, we prove the interchangeability of $\Box$ and $\Diamond$ in $\NR$. 
As a continuation of the present paper, in \cite{KK}, non-normal provability logics closed under the rule $\dfrac{A \to B}{\Box A \to \Box B}$ are investigated.

\section{Preliminaries}\label{Sec:Pre}

Let $\LA$ be the language of first-order arithmetic. 
Also, let $\omega$ be the set of all natural numbers. 
We fix a natural G\"odel numbering such that if $\psi$ is a proper subformula of $\varphi$, then the G\"odel number of $\psi$ is smaller than that of $\varphi$. 
We may also assume that $0$ is not the G\"odel number of anything. 
Let $\{\xi_t\}_{t \in \omega}$ be the repetition-free primitive recursive enumeration of all $\LA$-formulas arranged in ascending order of G\"odel numbers. 
That is, if $\xi_s$ is a proper subformula of $\xi_u$, then $s < u$. 
For each $n \in \omega$, let $\num{n}$ be the numeral for $n$. 
We assume that for any natural number $n$ and any formula $\varphi(x)$ having $x$ as a free variable, the G\"odel number of $\varphi(\num{n})$ is larger than $n$. 
For each $\LA$-formula $\varphi$, let $\gn{\varphi}$ be the numeral for the G\"odel number of $\varphi$. 
For any two formulas $\varphi$ and $\psi$, $\varphi \equiv \psi$ means that $\varphi$ and $\psi$ are syntactically identical. 

Throughout the present paper, $T$ always denotes a consistent primitive recursively axiomatized $\LA$-theory containing Peano Arithmetic $\PA$. 
We say that an $\LA$-formula $\PR_T(x)$ is a \textit{provability predicate} of $T$ if for any $\LA$-formula $\varphi$, we have that $T \vdash \varphi$ if and only if $\PA \vdash \PR_T(\gn{\varphi})$.  
In his proof of the incompleteness theorems, G\"odel constructed a primitive recursive proof predicate $\Proof_T(x, y)$ of $T$ naturally saying that $y$ is the G\"odel number of a $T$-proof of an $\LA$-formula whose G\"odel number is $x$. 
The $\Sigma_1$ formula $\Prov_T(x)$ defined by $\exists y \Proof_T(x, y)$ is a standard provability predicate of $T$. 
Then, it is known that $\Prov_T(x)$ satisfies the conditions $\D{2}$ and $\D{3}$ given in the introduction. 
We naturally assume that $\PA \vdash \forall x \forall y (\Proof_T(x, y) \to x \leq y)$. 

Here, we comment on our definition of provability predicates. 
We say that a formula $\PR_T(x)$ satisfies the \textit{Kreisel Condition} for $T$ if for any $\LA$-formula $\varphi$, $T \vdash \varphi$ if and only if $T \vdash \PR_T(\gn{\varphi})$ (cf.~Visser \cite{Vis21}). 
In some cases, it is convenient to define provability predicates of $T$ as formulas satisfying the Kreisel Condition for $T$. 
However, since the standard provability predicate $\Prov_T(x)$ of $T$ does not generally satisfy the Kreisel Condition for $T$, we prefer to adopt our definition of provability predicates.
Moreover, a $\Sigma_1$ formula $\PR_T(x)$ being a provability predicate of $T$ is equivalent to a natural condition that for any $\LA$-formula $\varphi$, $T \vdash \varphi$ if and only if $\mathbb{N} \models \PR_T(\gn{\varphi})$, where $\mathbb{N}$ is the standard model of arithmetic. 

We say that a $\Sigma_1$ formula $\PR_T^{\mathrm{R}}(x)$ is a \textit{Rosser provability predicate} of $T$ if there exists a primitive recursive formula $\Prf_T(x, y)$ satisfying the following three conditions:
\begin{enumerate}
	\item For any $\LA$-formula $\varphi$ and $n \in \omega$, $\PA \vdash \Proof_T(\gn{\varphi}, \num{n}) \leftrightarrow \Prf_T(\gn{\varphi}, \num{n})$. 
	\item $\PA \vdash \forall x \Bigl(\Fml(x) \to \bigl(\Prov_T(x) \leftrightarrow \exists y \Prf_T(x, y) \bigr) \Bigr)$, where $\Fml(x)$ is a primitive recursive formula naturally expressing that $x$ is the G\"odel number of an $\LA$-formula. 
	\item $\PR_T^{\mathrm{R}}(x)$ is of the form $\exists y \bigl(\Fml(x) \land \Prf_T(x, y) \land \forall z < y \, \neg \Prf_T(\dot{\neg}(x), z) \bigr)$, where $\dot{\neg}(x)$ is a primitive recursive term corresponding to a primitive recursive function calculating the G\"odel number of $\neg \varphi$ from that of $\varphi$. 
\end{enumerate}
It is known that each Rosser provability predicate of $T$ is in fact a $\Sigma_1$ provability predicate of $T$. 
The idea of witness comparison, which is technically very important, is behind Rosser provability predicates (see~\cite{GS,Lin}).
Based on the witness comparison argument, it is shown that for any Rosser provability predicate $\PR_T^{\mathrm{R}}(x)$ of $T$ and any $\LA$-formula $\varphi$, if $T \vdash \neg \varphi$, then $\PA \vdash \neg \PR_T^{\mathrm{R}}(\gn{\varphi})$.

The language $\LB$ of modal propositional logic consists of propositional variables, the logical constant $\bot$, propositional connectives $\neg, \land, \lor, \to$, and the modal operator $\Box$. 
Let $\MF$ be the set of all $\LB$-formulas. 
The axioms of the modal logic $\K$ are all propositional tautologies in $\LB$ and the axiom scheme $\Box(A \to B) \to (\Box A \to \Box B)$. 
The inference rules of $\K$ are Modus Ponens (\textsc{MP}) $\dfrac{A \quad A \to B}{B}$ and Necessitation (\textsc{Nec}) $\dfrac{A}{\Box A}$.
The modal logics $\KD$, $\mathsf{K4}$, and $\GL$ are obtained from $\K$ by adding the axiom schemata $\neg \Box \bot$, $\Box A \to \Box \Box A$, and $\Box(\Box A \to A) \to \Box A$, respectively. 
When we interpret $\Box$ by a provability predicate $\PR_T(x)$, then the axiom schemata $\Box(A \to B) \to (\Box A \to \Box B)$, $\Box A \to \Box \Box A$, and $\Box(\Box A \to A) \to \Box A$ correspond to $\D{2}$, $\D{3}$, and formalized L\"ob's theorem, respectively. 

To state these correspondences precisely, we introduce the notion of arithmetical interpretations. 
For each provability predicate $\PR_T(x)$ of $T$, a mapping $f$ from $\MF$ to a set of $\LA$-sentences is called an \textit{arithmetical interpretation based on $\PR_T(x)$} if it satisfies the following conditions: 
\begin{enumerate}
	\item $f(\bot)$ is $0=1$, 
	\item $f(\neg A)$ is $\neg f(A)$, 
	\item $f(A \circ B)$ is $f(A) \circ f(B)$ for $\circ \in \{\land, \lor, \to\}$, 
	\item $f(\Box A)$ is $\PR_T(\gn{f(A)})$. 
\end{enumerate}
Let $\PL(\PR_T)$ be the set of all $\LB$-formulas $A$ satisfying that for any arithmetical interpretation $f$ based on $\PR_T(x)$, $T \vdash f(A)$. 
The set $\PL(\PR_T)$ is called the \textit{provability logic} of $\PR_T(x)$. 

It is obvious that for any provability predicate $\PR_T(x)$ of $T$, $\PL(\PR_T)$ is closed under \textsc{Nec}. 
If $\PR_T(x)$ satisfies $\D{2}$, then $\PL(\PR_T)$ contains the logic $\K$, that is, $\PL(\PR_T)$ is a normal modal logic. 

The study of provability logics can be approached from two directions corresponding to the following two problems, respectively. 

\begin{prob}\label{MProb1}
For each provability predicate $\PR_T(x)$ of $T$, how is $\PL(\PR_T)$ axiomatized and what properties does it have? 
\end{prob}

\begin{prob}\label{MProb2}
For which modal logics $L$ is there a provability predicate $\PR_T(x)$ such that $L = \PL(\PR_T)$?
\end{prob}

The most striking result concerning the first problem is Solovay's arithmetical completeness theorem \cite{Sol}. 
It states that if $T$ is $\Sigma_1$-sound, then $\PL(\Prov_T)$ is exactly $\GL$. 
Visser \cite{Vis} proved that if $T$ is not $\Sigma_1$-sound, then $\PL(\Prov_T)$ is either $\GL$ or $\GL + \Box^n \bot$ for some $n \geq 1$. 
As an interesting example regarding the first problem, we present here Shavrukov's result \cite{Sha94}.  
Let $\Pr_{\PA}^{\mathrm{Sh}}(x)$ be the $\Sigma_2$ provability predicate $\exists y \bigl(\Prov_{\mathbf{I\Sigma}_y}(x) \land \neg \Prov_{\mathbf{I\Sigma}_y}(\gn{0=1}) \bigr)$ of $\PA$, which was essentially introduced by Smory\'nski \cite{Smo89}. 
Shavrukov proved that $\PL(\PR_{\PA}^{\mathrm{Sh}})$ is the logic $\KD + (\Box A \to \Box((\Box B \to B) \lor \Box A))$. 

For the second problem, the following results have been obtained by previous studies. 

\begin{itemize}
	\item (Kurahashi \cite{Kur20}) There exists a Rosser provability predicate $\PR_T^{\mathrm{R}}(x)$ of $T$ such that $\PL(\PR_T^{\mathrm{R}}) = \KD$. 
	\item (Kurahashi \cite{Kur18_1}) There exists a $\Sigma_2$ provability predicate $\PR_T(x)$ of $T$ such that $\PL(\PR_T) = \K$. 
	\item (Kurahashi \cite{Kur18_2}) For each $n \geq 2$, there exists a $\Sigma_2$ provability predicate $\PR_T(x)$ of $T$ such that $\PL(\PR_T) = \K + (\Box(\Box^n A \to A) \to \Box A)$. 
	\item (Montague \cite{Mon63}) For any provability predicate $\PR_T(x)$ of $T$, $\PL(\PR_T) \nsupseteq \mathsf{KT}$ ($= \K + (\Box A \to A)$). 
	\item (L\"ob \cite{Lob}) For any provability predicate $\PR_T(x)$ of $T$, $\PL(\PR_T) \neq \mathsf{K4}$. 
	\item (Kurahashi \cite{Kur18_2}) For any provability predicate $\PR_T(x)$ of $T$, if $T$ does not prove $\PR_T(\gn{0=1})$, then $\PL(\PR_T) \nsupseteq \mathsf{KB}$ ($= \K + (A \to \Box \Diamond A)$) and $\PL(\PR_T) \nsupseteq \mathsf{K5}$ ($= \K + (\Diamond A \to \Box \Diamond A)$). 
\end{itemize}

All of the above results are for normal modal logics. 
On the other hand, there is a result concerning a non-normal modal logic. 
Shavrukov \cite{Sha91} introduced the bimodal logic $\GR$ of the standard and Rosser provability predicates. 
Let $\mathcal{L}(\Box, \blacksquare)$ be the language of modal propositional logic equipped with an additional modal operator $\blacksquare$. 
The axiom schemata of $\GR^-$ are as follows: 
\begin{enumerate}
	\item Those of $\GL$ for $\Box$, 
	\item $\blacksquare A \to \Box A$, 
	\item $\Box A \to \Box \blacksquare A$, 
	\item $\Box A \to (\Box \bot \lor \blacksquare A)$, 
	\item $\Box \neg A \to \Box \neg \blacksquare A$. 
\end{enumerate}
The inference rules of $\GR^-$ are \textsc{MP} and \textsc{Nec} for $\Box$. 
The logic $\GR$ is obtained from $\GR^-$ by adding the rule $\dfrac{\Box A}{A}$. 
The studies of $\GR^-$ and $\GR$ presented in \cite{Sha91} were based on those of the logics $\mathsf{R}^-$ and $\mathsf{R}$ developed by Guaspari and Solovay \cite{GS}. 
The logic $\GR$ can be embedded into $\GR^-$ as follows. 

\begin{thm}[{\cite[Corollary 1.10]{Sha91}}]\label{GRm}
For any $\mathcal{L}(\Box, \blacksquare)$-formula $A$, $\GR \vdash A$ if and only if $\GR^- \vdash \Box A$. 
\end{thm}

A \textit{bimodal arithmetical interpretation $f$ based on $(\PR_T, \PR_T^{\mathrm{R}})$} is an arithmetical interpretation based on $\PR_T(x)$ such that $f(\blacksquare A)$ is $\PR_T^{\mathrm{R}}(\gn{f(A)})$. 
Shavrukov proved the following arithmetical soundness and completeness theorems. 

\begin{thm}[The arithmetical soundness theorem of $\GR$ {\cite[Lemma 2.5]{Sha91}}]
For any Rosser provability predicate $\PR_T^{\mathrm{R}}(x)$ of $T$, any bimodal arithmetical interpretation $f$ based on $(\Prov_T, \PR_T^{\mathrm{R}})$, and any $\mathcal{L}(\Box, \blacksquare)$-formula $A$, if $\GR \vdash A$, then $\PA \vdash f(A)$. 
\end{thm}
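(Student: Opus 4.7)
The plan is to proceed by induction on the length of the $\GR$-derivation of $A$, fixing throughout an arbitrary bimodal arithmetical interpretation $f$ based on $(\Prov_T, \PR_T^{\mathrm{R}})$. For each axiom schema I verify that $\PA \vdash f(B)$ for every instance $B$, and for each inference rule I verify that this property is preserved.

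The $\GL$-axiom schemata interpreted via $\Prov_T$ are handled by the classical arithmetical soundness of $\GL$: $\Prov_T$ satisfies $\D{2}$ and $\D{3}$ provably in $\PA$, and formalized L\"ob's theorem is a theorem of $\PA$. The rule \textsc{MP} is immediate, and \textsc{Nec} follows from the chain $\PA \vdash f(B) \Rightarrow T \vdash f(B) \Rightarrow \PA \vdash \Prov_T(\gn{f(B)})$, valid for any provability predicate.

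The bimodal-specific axioms are verified using the properties of Rosser's provability predicates collected in Section~\ref{Sec:Pre}. Axiom 2, $\blacksquare A \to \Box A$, reduces inside $\PA$ to the implication $\PR_T^{\mathrm{R}}(\gn{\varphi}) \to \Prov_T(\gn{\varphi})$, which is a consequence of clauses 2 and 3 of the definition of a Rosser's predicate. Axiom 3, $\Box A \to \Box \blacksquare A$, uses the converse $\PA \vdash \Prov_T(\gn{\varphi}) \to \PR_T^{\mathrm{R}}(\gn{\varphi})$ for formulas $\varphi$ together with formalized $\Sigma_1$-completeness applied to the $\Sigma_1$ formula $\PR_T^{\mathrm{R}}(\gn{\varphi})$, yielding $\Prov_T(\gn{\varphi}) \to \Prov_T(\gn{\PR_T^{\mathrm{R}}(\gn{\varphi})})$. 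Axiom 4, $\Box A \to (\Box \bot \lor \blacksquare A)$, is the formalized Rosser trick: given a $\Prf_T$-proof $y$ of $\varphi$ and consistency of $T$, no $\Prf_T$-proof $z < y$ of $\neg \varphi$ can exist, so $\PR_T^{\mathrm{R}}(\gn{\varphi})$ holds — an argument straightforwardly formalizable in $\PA$. Axiom 5, $\Box \neg A \to \Box \neg \blacksquare A$, is obtained by inserting the dual Rosser property ``$T \vdash \neg \varphi \Rightarrow \PA \vdash \neg \PR_T^{\mathrm{R}}(\gn{\varphi})$'' noted in Section~\ref{Sec:Pre} inside $\Prov_T$ and appealing once more to provable $\Sigma_1$-completeness of $\Prov_T$ on the arithmetised equivalent.

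The main obstacle is the extra inference rule $\dfrac{\Box A}{A}$. A naive induction at this step would demand the $\PA$-provable reflection principle $\Prov_T(\gn{f(A)}) \to f(A)$, which is unavailable. The intended route is to combine the induction with a structural observation about $\GR$-derivations that controls uses of this rule: one either shows directly that the rule is admissible, so that each application can be eliminated in favour of a derivation using only the axiom schemata together with \textsc{MP} and \textsc{Nec} (for which the induction goes through), or one shows that every derivation of $\Box A$ in $\GR$ is accompanied by enough of the Rosser-specific axioms~2--5 to upgrade the $\PA$-proof of $\Prov_T(\gn{f(A)})$ into a $\PA$-proof of $f(A)$ itself. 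This reorganisation, where the interaction between the $\GL$-part of $\GR$ and the Rosser-specific axioms is decisive, is the technical heart of the argument.
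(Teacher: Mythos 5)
The paper does not prove this statement; it is imported from Shavrukov \cite{Sha91}, so there is no internal proof to compare against, and your proposal must stand on its own. It has a genuine gap exactly at the point you call ``the technical heart'': the rule $\dfrac{\Box A}{A}$ is left unproved, with two undeveloped strategies offered in its place, and your diagnosis of the obstacle is off. The induction statement is the \emph{meta-level} property ``$\PA \vdash f(A)$'', not a single $\PA$-provable implication, so this rule does not call for the reflection principle $\Prov_T(\gn{f(A)}) \to f(A)$ inside $\PA$. From the induction hypothesis $\PA \vdash f(\Box A)$, i.e.\ $\PA \vdash \Prov_T(\gn{f(A)})$, the defining property of a provability predicate (Section \ref{Sec:Pre}) immediately yields $T \vdash f(A)$; in Shavrukov's setting $T = \PA$ this is precisely the required conclusion, and this one line closes the case. (For the general $T$ of the present paper the same step only gives $T \vdash f(A)$, which is why the conclusion is best read as $T$-provability; that is a matter of how the statement is transcribed, and is not repaired by your detour.) Your first alternative --- that $\dfrac{\Box A}{A}$ is admissible from the remaining axioms and rules of $\GR$ --- is itself a nontrivial unproved claim, and your second alternative is too vague to check.

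There is also a concrete error in your treatment of axiom 3. The ``converse'' $\PA \vdash \Prov_T(\gn{\varphi}) \to \PR_T^{\mathrm{R}}(\gn{\varphi})$ is false: taking $\varphi$ to be $0=1$ and using $\PA \vdash \neg \PR_T^{\mathrm{R}}(\gn{0=1})$ (which follows from $T \vdash \neg(0=1)$ and the dual Rosser property you cite), it would give $\PA \vdash \Con_T$, contradicting the second incompleteness theorem. The correct route is a case split on $\PR_T^{\mathrm{R}}(\gn{\varphi})$ inside $\PA$: if it holds, provable $\Sigma_1$-completeness gives $\Prov_T(\gn{\PR_T^{\mathrm{R}}(\gn{\varphi})})$; if it fails while $\Prov_T(\gn{\varphi})$ holds, the formalized Rosser argument yields $\Prov_T(\gn{\neg\varphi})$, hence $\neg\Con_T$, hence $\Prov_T$ of everything. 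Your treatments of axioms 2, 4, and 5 and of \textsc{MP} and \textsc{Nec} are essentially right.
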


\begin{thm}[The uniform arithmetical completeness theorem for $\GR$ {\cite[Theorem 3.1]{Sha91}}]
Suppose that $T$ is $\Sigma_1$-sound. 
Then, there exist a Rosser provability predicate $\PR_T^{\mathrm{R}}(x)$ of $T$ and a bimodal arithmetical interpretation $f$ based on $(\Prov_T, \PR_T^{\mathrm{R}})$ such that for any $\mathcal{L}(\Box, \blacksquare)$-formula $A$, $\GR \vdash A$ if and only if $T \vdash f(A)$. 
\end{thm}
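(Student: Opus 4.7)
The plan is to adapt Solovay's arithmetical completeness method to the bimodal setting, combined with a Rosser-style diagonalization of the proof predicate. The first step is a Kripke-semantic completeness theorem for $\GR$: non-theorems should be refuted on finite frames $(W, <, \prec)$ where $<$ is irreflexive, transitive, and converse well-founded, and $\prec$ is a second accessibility relation whose interaction with $<$ encodes axioms (2)--(5). Using the resulting finite frame property of $\GR$, I would assemble all finite counter-models into a single recursively presented universal Kripke model $\mathcal{U}$, with a designated root, such that every non-theorem of $\GR$ is refuted at some node of $\mathcal{U}$. This universal model is what lets the completeness be uniform in $A$.

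The second step applies Solovay's classical construction to $\mathcal{U}$. A primitive recursive function $h$ approximates a path through $\mathcal{U}$, climbing only when $\Prov_T$ witnesses inconsistency of a certain finite diagram of sentences. Setting $L_i :\equiv \exists x\, \forall y \geq x\, h(y) = \num{i}$ and $f(p) :\equiv \bigvee_{i \Vdash p} L_i$, the usual Solovay lemmas are provable in $\PA$, so $\Prov_T(\gn{f(A)})$ behaves like the forcing of $\Box A$ along the approximated path. The $\Sigma_1$-soundness of $T$ forces $h$ to settle at the root in the standard model, delivering the desired equivalence for the $\Box$-fragment of $\GR$.

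The third and crucial step is the simultaneous construction of $\PR_T^{\mathrm{R}}(x)$. I would define a primitive recursive $\Prf_T(x, y)$ by self-reference so that $\PR_T^{\mathrm{R}}(\gn{f(A)})$ tracks the forcing of $\blacksquare A$ along $h$. Roughly, $\Prf_T(x, y)$ is arranged to agree with $\Proof_T(x, y)$ for all standard $y$ (so $\PR_T^{\mathrm{R}}$ is genuinely a Rosser predicate in the sense of Section~\ref{Sec:Pre}), while nonstandard ``proofs'' are inserted or suppressed according to whether $h$ has already moved past a node from which $\blacksquare A$ should be refuted, and the Rosser clause $\forall z < y\, \neg \Prf_T(\dot{\neg}(x), z)$ is made to match the temporal ordering of witnesses imposed by $\mathcal{U}$ via $\prec$. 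Axioms (2)--(5) are then to be verified inside $\PA$ by running the Solovay machinery simultaneously for $\Proof_T$ and $\Prf_T$: axiom (3) comes from a second-level diagonalization showing that once $\Prov_T$ has seen $f(A)$, $\PR_T^{\mathrm{R}}$ has seen it too relative to $h$, while (4) and (5) are consequences of the Rosser minimality clause.

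The main obstacle is the simultaneity of these fixed points. The definitions of $h$, $f$, and $\Prf_T$ all refer to one another, and one must arrange the recursion carefully so that $\Prf_T$ is primitive recursive, so that $\PR_T^{\mathrm{R}}$ satisfies the three defining clauses of a Rosser predicate uniformly in $A$, and so that $\PA$ proves the $\blacksquare$-axioms of $\GR$ without invoking soundness. Once this is set up, $\Sigma_1$-soundness of $T$ is needed only externally, to conclude that $h$ truly reaches the root in the standard model; this then yields the uniform equivalence $\GR \vdash A \iff T \vdash f(A)$.
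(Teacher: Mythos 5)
This statement is quoted by the paper from Shavrukov \cite[Theorem 3.1]{Sha91} and is not proved in the paper at all, so there is no in-paper proof to compare against; your proposal has to be judged as a reconstruction of Shavrukov's argument. At the level of architecture it is on target: Shavrukov does proceed via a Kripke-style completeness theorem for $\GR$ on finite $\GL$-frames equipped with a second relation for $\blacksquare$, a Solovay function climbing a recursively presented amalgam of countermodels, and a self-referential (recursion-theorem) definition of a proof predicate $\Prf_T$ that agrees with $\Proof_T$ on standard arguments while the Rosser clause $\forall z<y\,\neg\Prf_T(\dot{\neg}(x),z)$ is steered by the order in which witnesses appear along the Solovay path. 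This is also exactly the style of construction the present paper uses in Sections \ref{Sec:N}--\ref{Sec:NRF} (disjoint union of finite countermodels, a function $h$, and functions $g_i$ defined by the recursion theorem that reorder the enumeration of theorems to control the Rosser clause), so your plan is consistent with the methodology that is actually carried out elsewhere in the paper.

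That said, what you have written is a roadmap rather than a proof, and the part you defer is precisely the part that carries all the difficulty. The entire content of the theorem lives in your third step: one must exhibit a concrete simultaneous recursion for $h$ and $\Prf_T$, check that $\Prf_T$ is primitive recursive and satisfies the three defining clauses of a Rosser predicate from Section \ref{Sec:Pre}, and then verify \emph{inside} $\PA$ that $\PR_T^{\mathrm{R}}$ validates axioms (2)--(5) of $\GR$ for \emph{all} sentences, not just those of the form $f(A)$ --- in particular axiom (3), $\Box A\to\Box\blacksquare A$, requires a genuinely new formalized argument (compare the delicate Case 2 analysis in Claim \ref{Cl:g_3_4} of the paper, which handles only the analogous $\D{3}$ condition and already occupies a page). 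Saying that axioms (4) and (5) ``are consequences of the Rosser minimality clause'' is not accurate as stated: they hold for Shavrukov's specific construction because the ordering of outputs is engineered to make them hold, not for an arbitrary Rosser predicate. You acknowledge the circularity of the three fixed points as ``the main obstacle,'' but resolving that obstacle is the theorem; as it stands, the proposal identifies the right ingredients without establishing that they can be made to cohere.
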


Let $L^{\mathrm{R}}$ be the unimodal logic obtained by replacing all $\blacksquare$ in the $\Box$-free fragment of $\GR$ by $\Box$. 
The following corollary follows from Shavrukov's theorems. 

\begin{cor}\label{Cor:LR}
If $T$ is $\Sigma_1$-sound, then 
\[
	L^{\mathrm{R}} = \bigcap \{\PL(\PR_T^{\mathrm{R}}) \mid \PR_T^{\mathrm{R}}(x)\ \text{is a Rosser provability predicate of}\ T\}.
\] 
Furthermore, there exists a Rosser provability predicate $\PR_T^{\mathrm{R}}(x)$ of $T$ such that $L^{\mathrm{R}} = \PL(\PR_T^{\mathrm{R}})$. 
\end{cor}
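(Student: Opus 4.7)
The plan is to read the corollary off directly from Shavrukov's soundness and uniform arithmetical completeness theorems for $\GR$, combined with a straightforward translation between the unimodal and bimodal settings. For each $\LB$-formula $A$, let $A^\blacksquare$ denote the $\mathcal{L}(\Box, \blacksquare)$-formula obtained by replacing every occurrence of $\Box$ in $A$ by $\blacksquare$. Then $A^\blacksquare$ lies in the $\Box$-free fragment of $\mathcal{L}(\Box, \blacksquare)$, and by the very definition of $L^{\mathrm{R}}$ we have $L^{\mathrm{R}} \vdash A$ iff $\GR \vdash A^\blacksquare$. Moreover, any unimodal arithmetical interpretation $g$ based on a Rosser's provability predicate $\PR_T^{\mathrm{R}}(x)$ extends to a bimodal interpretation $f$ based on $(\Prov_T, \PR_T^{\mathrm{R}})$ with $g(A) = f(A^\blacksquare)$ for every $\LB$-formula $A$, and conversely the restriction of any bimodal $f$ to $\Box$-free formulas has this form.

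For the inclusion $L^{\mathrm{R}} \subseteq \bigcap \{\PL(\PR_T^{\mathrm{R}}) \mid \PR_T^{\mathrm{R}}(x) \text{ is a Rosser's provability predicate of } T\}$, I would take any $A \in L^{\mathrm{R}}$ and any Rosser's $\PR_T^{\mathrm{R}}(x)$ together with an arbitrary unimodal interpretation $g$ based on it; then $\GR \vdash A^\blacksquare$, so applying Shavrukov's soundness theorem to the bimodal lift $f$ of $g$ yields $\PA \vdash f(A^\blacksquare) = g(A)$, hence $A \in \PL(\PR_T^{\mathrm{R}})$. For the reverse inclusion, suppose $L^{\mathrm{R}} \nvdash A$, so that $\GR \nvdash A^\blacksquare$; the uniform arithmetical completeness theorem then supplies a specific Rosser's $\PR_T^{\mathrm{R}}(x)$ and a bimodal $f$ with $T \nvdash f(A^\blacksquare)$, and restricting $f$ to the $\Box$-free fragment gives a unimodal $g$ witnessing $A \notin \PL(\PR_T^{\mathrm{R}})$.

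The ``furthermore'' clause exploits the uniformity in the completeness theorem: it produces a single Rosser's $\PR_T^{\mathrm{R}}(x)$ and a single interpretation $f$ that work simultaneously for every formula. With this fixed choice, the containment $L^{\mathrm{R}} \subseteq \PL(\PR_T^{\mathrm{R}})$ is again soundness, while $\PL(\PR_T^{\mathrm{R}}) \subseteq L^{\mathrm{R}}$ follows by evaluating on the distinguished $f$: if $T \vdash g(A)$ for the unimodal restriction $g$, then $T \vdash f(A^\blacksquare)$, hence $\GR \vdash A^\blacksquare$ and therefore $L^{\mathrm{R}} \vdash A$.

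I do not foresee any substantive obstacle, as the corollary is essentially a bookkeeping consequence of the two Shavrukov theorems. The only point deserving care is checking that the translation $A \mapsto A^\blacksquare$ commutes with arithmetical evaluation, that is, $g(A)$ and $f(A^\blacksquare)$ are literally the same $\LA$-sentence whenever $f$ is the bimodal lift of $g$; this is immediate by induction on $A$ from the clauses defining arithmetical interpretations, since only the $\Box$/$\blacksquare$ clause is nontrivial and both sides reduce to $\PR_T^{\mathrm{R}}(\gn{g(B)})$.
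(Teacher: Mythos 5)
Your proposal is correct and is exactly the argument the paper intends: the paper states Corollary \ref{Cor:LR} as an immediate consequence of Shavrukov's soundness and uniform arithmetical completeness theorems, and your write-up supplies precisely the routine translation between the $\Box$-free bimodal fragment and the unimodal setting (soundness for one inclusion, uniform completeness for the other and for the ``furthermore'' clause) that this deduction requires.
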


Corollary \ref{Cor:LR} states that $L^{\mathrm{R}}$ is the provability logic of all Rosser provability predicates. 
The logic $L^{\mathrm{R}}$ is a non-normal modal logic because there are Rosser provability predicates that do not satisfy $\D{2}$. 
However, since no specific axiomatization for $L^{\mathrm{R}}$ is obtained, Corollary \ref{Cor:LR} is not sufficient for us in view of Problem \ref{MProb1}.
In this context, our purpose in the present paper is to axiomatize the following four logics: 
\begin{enumerate}
	\item $\bigcap \{\PL(\PR_T) \mid \PR_T(x)\ \text{is a provability predicate of}\ T\}$,
	\item $\bigcap \{\PL(\PR_T) \mid \PR_T(x)\ \text{is a provability predicate of}\ T\ \text{satisfying}\ \D{3}\}$,
	\item $\bigcap \{\PL(\PR_T^{\mathrm{R}}) \mid \PR_T^{\mathrm{R}}(x)\ \text{is a Rosser provability predicate of}\ T\}$,
	\item $\bigcap \{\PL(\PR_T^{\mathrm{R}}) \mid \PR_T^{\mathrm{R}}(x)\ \text{is a Rosser provability predicate of}\ T\ \text{satisfying}\ \D{3}\}$. 
\end{enumerate}
In the next section, we introduce the logics $\N$, $\NF$, $\NR$, and $\NRF$ which are candidates for axiomatizations of these logics. 
We study these logics from the point of view of Problems \ref{MProb1} and \ref{MProb2}.

\section{The logic $\N$ and its extensions}\label{Sec:Compl}

For any provability predicate $\PR_T(x)$, the provability logic $\PL(\PR_T)$ is closed under \textsc{Nec}. 
Thus, the provability logic 
\[
	\bigcap \{\PL(\PR_T) \mid \PR_T(x)\ \text{is a provability predicate of}\ T\}
\]
of all provability predicates is also closed under \textsc{Nec}. 
On the other hand, there seems to be no other non-trivial modal logical principle that is common to all provability predicates. 
Our candidate for the axiomatization of the provability logic of all provability predicates is the pure logic of necessitation $\N$ that was introduced by Fitting, Marek, and Truszczy\'nski \cite{FMT}. 
The axioms of $\N$ are propositional tautologies in the language $\LB$ and the inference rules of $\N$ are \textsc{MP} and \textsc{Nec}. 

Fitting, Marek, and Truszczy\'nski introduced the following natural relational semantics for $\N$. 

\begin{defn}[$\N$-frames]\leavevmode
\begin{itemize}
	\item We say that a tuple $(W, \{\prec_B\}_{B \in \MF})$ is an \textit{$\N$-frame} if $W$ is a non-empty set and for each $B \in \MF$, $\prec_B$ is a binary relation on $W$.  
	\item We say that a triple $(W, \{\prec_B\}_{B \in \MF}, \Vdash)$ is an \textit{$\N$-model} if $(W, \{\prec_B\}_{B \in \MF})$ is an $\N$-frame and $\Vdash$ is a satisfaction relation on $W \times \MF$ fulfilling the usual conditions for propositional connectives and
	\[
		x \Vdash \Box B \iff \forall y \in W(x \prec_B y \Rightarrow y \Vdash B).
	\]
	\item A formula $A$ is \textit{valid} in an $\N$-model $(W, \{\prec_B\}_{B \in \MF}, \Vdash)$ if for any $x \in W$, $x \Vdash A$.
	\item A formula $A$ is \textit{valid} in an $\N$-frame $(W, \{\prec_B\}_{B \in \MF})$ if $A$ is valid in any $\N$-model $(W, \{\prec_B\}_{B \in \MF}, \Vdash)$ based on $(W, \{\prec_B\}_{B \in \MF})$.
	\item We say that an $\N$-frame $(W, \{\prec_B\}_{B \in \MF})$ is \textit{finite} if $W$ is a finite set. 
\end{itemize}
\end{defn}

Fitting, Marek, and Truszczy\'nski proved that $\N$ is sound and complete and has the finite frame property with respect to this semantics. 

\begin{fact}[Fitting, Marek, and Truszczy\'nski {\cite[Theorems 3.6 and 4.10]{FMT}}]\label{Fact1}
For any $A \in \MF$, the following are equivalent:
\begin{enumerate}
	\item $\N \vdash A$. 
	\item $A$ is valid in all $\N$-frames. 
	\item $A$ is valid in all finite $\N$-frames. 
\end{enumerate}
\end{fact}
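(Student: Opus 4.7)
The plan is to handle the three implications separately: (1)$\Rightarrow$(2) is soundness, (2)$\Rightarrow$(3) is immediate since every finite frame is a frame, and the heart of the theorem is the finite model property (3)$\Rightarrow$(1), which I would prove by an explicit refutation construction over the subformula set $\Sub(A)$.

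For soundness (1)$\Rightarrow$(2), I would induct on the length of a derivation in $\N$. Propositional tautologies are valid in every $\N$-model by the Boolean clauses on $\Vdash$, and \textsc{MP} preserves validity pointwise. For \textsc{Nec}, if $A$ is valid in the $\N$-model $(W,\{\prec_B\}_{B\in\MF},\Vdash)$, then at any $x \in W$ every $y$ with $x \prec_A y$ satisfies $A$, so $x \Vdash \Box A$; here the fact that each box is interpreted through its own formula-indexed relation $\prec_A$ is crucial.

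For (3)$\Rightarrow$(1), I would argue contrapositively. Assume $\N \nvdash A$, put $S = \Sub(A)$, and let $A_S$ be the finite set of \emph{atoms of $S$}, namely the propositional variables occurring in $A$ together with all formulas $\Box B$ with $\Box B \in S$. Take $W$ to be the set of valuations $v : A_S \to \{0,1\}$ such that
\[
\sigma_v \;:=\; \bigwedge_{v(a)=1} a \;\land\; \bigwedge_{v(a)=0} \lnot a
\]
is $\N$-consistent. Each $v$ extends to $\bar v : S \to \{0,1\}$ via the Boolean clauses, and we declare $v \Vdash p$ iff $v(p)=1$ on propositional variables. For each $\Box B \in S$ for which some $v \in W$ has $v(\Box B)=0$, fix a witness $u^*_B \in W$ with $\bar{u^*_B}(B)=0$, and set
\[
v \prec_B u \;\iff\; \bigl(v(\Box B)=1 \land \bar u(B)=1\bigr) \;\lor\; \bigl(v(\Box B)=0 \land u = u^*_B\bigr);
\]
for all remaining $B \in \MF$ set $\prec_B = \emptyset$. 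A routine induction on $B \in S$ then shows $v \Vdash B \iff \bar v(B)=1$, and since $\{\lnot A\}$ is $\N$-consistent it extends to some $v_0 \in W$ with $\bar v_0(A)=0$, refuting $A$ in the finite $\N$-model $(W,\{\prec_B\}_{B\in\MF},\Vdash)$.

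The main obstacle is producing the witness $u^*_B$. The key observation is that if $v \in W$ has $v(\Box B)=0$, then $\sigma_v$ contains $\lnot\Box B$ as a conjunct, so $\N\nvdash \Box B$; contraposing \textsc{Nec} yields $\N\nvdash B$, whence $\{\lnot B\}$ is $\N$-consistent and can be Lindenbaum-extended inside the finite atom set $A_S$ to some $u^*_B \in W$ with $\bar{u^*_B}(B)=0$. The inductive step at $\Box B$ then works as designed: when $v(\Box B)=1$ the successors of $v$ are exactly those $u$ with $\bar u(B)=1$, equivalently (by the induction hypothesis) those $u$ with $u \Vdash B$, forcing $v \Vdash \Box B$; when $v(\Box B)=0$ the single successor $u^*_B$ falsifies $B$, forcing $v \nVdash \Box B$. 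This is precisely the place where the freedom to choose a separate relation $\prec_B$ per formula $B$ is exploited, and it is what makes the non-normal system $\N$ amenable to a Kripke-style finite model argument despite lacking the $\K$-axiom.
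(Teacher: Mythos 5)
Your argument is correct and follows essentially the same route as the paper: the paper itself only cites Fitting--Marek--Truszczy\'nski for this Fact, but its own proof of the analogous Theorems \ref{Thm:complNR}, \ref{Thm:complNF}, and \ref{Thm:complNRF} is exactly your finite canonical-model construction over $\num{\Sub(A)}$ (with $A$-maximally consistent sets playing the role of your consistent atom-valuations), and the box case of the truth lemma hinges on the same contrapositive of \textsc{Nec}. The only cosmetic difference is that when a world refutes $\Box B$ the paper's relation $X \prec_B Y \iff \Box B \notin X$ or $B \in Y$ makes that world see \emph{every} world, so the consistent extension of $\{{\sim}B\}$ is automatically a successor, whereas you route it to a single pre-selected witness $u^*_B$; both choices validate the truth lemma.
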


Each $\N$-model has infinitely many binary relations $\{\prec_B\}_{B \in \MF}$, but the truth of each $\LB$-formula in each element of the model is determined by referring to only a finite number of those relations. 
Let $\Sub(A)$ be the set of all subformulas of $A \in \MF$. 

\begin{fact}[Fitting, Marek, and Truszczy\'nski {\cite[Theorem 4.11]{FMT}}]\label{Fact2}
Let $A \in \MF$. 
Let $(W, \{\prec_B\}_{B \in \MF}, \Vdash)$ and $(W, \{\prec^*_B\}_{B \in \MF}, \Vdash^*)$ be any $\N$-models satisfying the following two conditions:
\begin{enumerate}
	\item For each $x \in W$ and each propositional variable $p \in \Sub(A)$, we have that $x \Vdash p \iff x \Vdash^* p$; 
	\item For each $\Box B \in \Sub(A)$, $\prec_B = \prec^*_B$. 
\end{enumerate}
Then, for every $x \in W$, $x \Vdash A \iff x \Vdash^* A$. 
\end{fact}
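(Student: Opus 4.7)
The plan is to prove the statement by induction on the structure of $C$ ranging over $\Sub(A)$, establishing the strengthened claim: for every $C \in \Sub(A)$ and every $x \in W$, $x \Vdash C \iff x \Vdash^* C$. The original assertion is then the special case $C = A$. Restricting the induction to subformulas of $A$ is essential, because both hypotheses of the fact are indexed by $\Sub(A)$; if we tried to induct on all $\LB$-formulas, there would be no leverage to compare $\Vdash$ and $\Vdash^*$.

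First I would dispose of the base cases. If $C$ is a propositional variable belonging to $\Sub(A)$, then hypothesis 1 delivers $x \Vdash C \iff x \Vdash^* C$ directly. If $C$ is $\bot$, the equivalence is trivial since both models falsify $\bot$ at every world. For the Boolean inductive step, when $C$ is of the form $\neg D$ or $D \circ E$ with $\circ \in \{\land, \lor, \to\}$, every immediate subformula of $C$ also lies in $\Sub(A)$, so the induction hypothesis applies to them, and the usual truth clauses for classical connectives preserve the equivalence pointwise at each $x \in W$.

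The only substantive case is $C = \Box B$ with $\Box B \in \Sub(A)$. Here $B \in \Sub(A)$ as well, so the induction hypothesis gives $y \Vdash B \iff y \Vdash^* B$ for every $y \in W$. Moreover, hypothesis 2 yields $\prec_B = \prec^*_B$. Chaining these,
\[
  x \Vdash \Box B \iff \forall y \in W(x \prec_B y \Rightarrow y \Vdash B) \iff \forall y \in W(x \prec^*_B y \Rightarrow y \Vdash^* B) \iff x \Vdash^* \Box B,
\]
which closes the induction.

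I do not anticipate any real obstacle: the argument is a routine structural induction whose entire content is the bookkeeping observation that the truth of $A$ at a world depends only on the valuations of the propositional letters appearing in $A$ and on the relations $\prec_B$ indexed by boxed subformulas of $A$, both of which are fixed across the two models by hypothesis. The one mild subtlety is making sure the induction is framed over $\Sub(A)$, so that the hypotheses keep applying when we descend through connectives or under a box.
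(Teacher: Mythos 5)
Your proof is correct. The paper itself does not prove this statement---it is imported as a Fact from Fitting, Marek, and Truszczy\'nski \cite[Theorem 4.11]{FMT} without proof---and your structural induction over $\Sub(A)$, with the base case from hypothesis 1 and the $\Box B$ case combining the induction hypothesis for $B$ with $\prec_B = \prec^*_B$ from hypothesis 2, is exactly the standard argument one would give for it.
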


We introduce the three extensions $\NR$, $\NF$, and $\NRF$ of $\N$. 

\begin{defn}\leavevmode
\begin{itemize}
	\item The logic $\NR$ is obtained from $\N$ by adding the inference rule $\dfrac{\neg B}{\neg \Box B}$. 
	\item The logics $\NF$ and $\NRF$ are obtained from $\N$ and $\NR$, respectively, by adding the axiom scheme $\Box B \to \Box \Box B$. 
\end{itemize}
\end{defn}

We call the rule $\dfrac{\neg B}{\neg \Box B}$ the Rosser rule (\textsc{Ros}). 
Before proving the completeness theorems for these logics, we show that the validity of these logics is related to some appropriate conditions on $\N$-frames. 

\begin{defn}
Let $A \in \MF$ and $\Gamma \subseteq \MF$. 
Let $\mathcal{F} = (W, \{\prec_B\}_{B \in \MF})$ be any $\N$-frame. 
\begin{itemize}
	\item $\mathcal{F}$ is called \textit{$A$-serial} if for every $x \in W$, there exists a $y \in W$ such that $x \prec_A y$. 
	\item $\mathcal{F}$ is said to be \textit{$\Gamma$-serial} if $\mathcal{F}$ is $A$-serial for every $\Box A \in \Gamma$.  
	\item $\mathcal{F}$ is called \textit{serial} if $\mathcal{F}$ is $\MF$-serial. 
\end{itemize}
\end{defn} 

\begin{prop}
Let $A \in \MF$ and $\mathcal{M} = (W, \{\prec_B\}_{B \in \MF}, \Vdash)$ be any $\N$-model. 
Suppose that the $\N$-frame $\mathcal{F} = (W, \{\prec_B\}_{B \in \MF})$ is $A$-serial. 
If $\neg A$ is valid in $\mathcal{M}$, then $\neg \Box A$ is also valid in $\mathcal{M}$. 
\end{prop}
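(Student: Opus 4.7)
The plan is to give a direct semantic argument: fix an arbitrary world $x \in W$ and produce a $\prec_A$-successor that falsifies $A$, which immediately falsifies $\Box A$ at $x$.

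More concretely, I would proceed as follows. Let $x \in W$ be arbitrary; we want to show $x \Vdash \neg \Box A$. By the $A$-seriality assumption on $\mathcal{F}$, there exists some $y \in W$ with $x \prec_A y$. Since $\neg A$ is by hypothesis valid in $\mathcal{M}$, we have $y \Vdash \neg A$, i.e.\ $y \not\Vdash A$. Applying the defining clause
\[
	x \Vdash \Box A \iff \forall z \in W(x \prec_A z \Rightarrow z \Vdash A),
\]
the pair $(x,y)$ witnesses the failure of the right-hand side, so $x \not\Vdash \Box A$, that is, $x \Vdash \neg \Box A$. Since $x$ was arbitrary, $\neg \Box A$ is valid in $\mathcal{M}$.

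There is no real obstacle here; the statement is essentially the soundness of the Rosser rule $\textsc{Ros}$ with respect to $A$-serial frames, and it is verified by a single application of the truth clause for $\Box$ together with the seriality witness. The only thing worth emphasising in the write-up is that we use the \emph{specific} relation $\prec_A$ (not some other $\prec_B$), which is exactly why the appropriate notion of seriality is indexed by the formula $A$.
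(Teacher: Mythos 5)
Your argument is correct and coincides with the paper's own proof: both fix an arbitrary $x$, use $A$-seriality to obtain a $\prec_A$-successor $y$, use validity of $\neg A$ to get $y \nVdash A$, and conclude $x \Vdash \neg \Box A$ via the truth clause for $\Box$. Nothing to add.
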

\begin{proof}
Suppose that $\mathcal{F}$ is $A$-serial and $\neg A$ is valid in $\mathcal{M}$. 
Let $x \in W$ be any element. 
Since $\mathcal{F}$ is $A$-serial, there exists a $y \in W$ such that $x \prec_A y$. 
Since $\neg A$ is valid in $\mathcal{M}$, we have $y \Vdash \neg A$. 
Thus, $x \Vdash \neg \Box A$. 
Therefore, $\neg \Box A$ is valid in $\mathcal{M}$. 
\end{proof}

\begin{cor}\label{Cor:NR}
Let $A \in \MF$. 
If $\NR \vdash A$, then $A$ is valid in all serial $\N$-frames. 
\end{cor}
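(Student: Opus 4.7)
The plan is to argue by induction on the length of a proof of $A$ in $\NR$, showing that every formula derivable in $\NR$ is valid in every serial $\N$-frame $\mathcal{F} = (W, \{\prec_B\}_{B \in \MF})$. I would fix such an $\mathcal{F}$ and an arbitrary $\N$-model $\mathcal{M} = (W, \{\prec_B\}_{B \in \MF}, \Vdash)$ based on it, and show $x \Vdash A$ for every $x \in W$.

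For the base case, axioms of $\NR$ are propositional tautologies, so their validity in $\mathcal{M}$ is immediate. For the inductive step, the rules \textsc{MP} and \textsc{Nec} are handled exactly as in the soundness direction of Fact~\ref{Fact1}: \textsc{MP} preserves validity in $\mathcal{M}$ by the standard truth-functional argument, and \textsc{Nec} preserves validity because if $B$ is valid in $\mathcal{M}$ then for every $x \in W$ and every $y$ with $x \prec_B y$ we have $y \Vdash B$, hence $x \Vdash \Box B$. The one new case is the Rosser rule \textsc{Ros}: suppose $\neg B$ is valid in $\mathcal{M}$ by the induction hypothesis. Since $\mathcal{F}$ is serial, it is in particular $B$-serial, so the Proposition just proved applies and yields that $\neg \Box B$ is valid in $\mathcal{M}$.

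This completes the induction, and since $\mathcal{M}$ was arbitrary among models based on $\mathcal{F}$, we conclude that $A$ is valid in $\mathcal{F}$. As $\mathcal{F}$ ranged over all serial $\N$-frames, the corollary follows.

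There is no genuine obstacle here; the corollary is essentially a bookkeeping consequence of the preceding proposition together with the soundness part of Fact~\ref{Fact1}. The only conceptual point to note is that seriality of $\mathcal{F}$ (i.e.\ $\MF$-seriality) is stronger than what the Proposition requires for any single instance of \textsc{Ros} (namely $B$-seriality for the particular $B$ involved), which is exactly what makes a uniform induction on $\NR$-proofs go through without any need to track which formulas appear.
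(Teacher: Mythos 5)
Your proof is correct and is exactly the argument the paper intends: the corollary is stated without proof as an immediate consequence of the preceding Proposition together with the soundness of $\N$, and your induction on $\NR$-proof length (with the Rosser rule handled by the Proposition via $B$-seriality of a serial frame) is the routine way to spell that out. Note that it is indeed important, as you do, to carry the induction at the level of validity in a fixed model $\mathcal{M}$ rather than validity in the frame, since that is the form in which the Proposition gives soundness of \textsc{Ros}.
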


\begin{defn}
Let $A \in \MF$ and $\Gamma \subseteq \MF$. 
Let $\mathcal{F} = (W, \{\prec_B\}_{B \in \MF})$ be any $\N$-frame. 
\begin{itemize}
	\item $\mathcal{F}$ is called \textit{$A$-transitive} if for every $x, y, z \in W$, if $x \prec_{\Box A} y$ and $y \prec_A z$, then $x \prec_A z$. 
	\item $\mathcal{F}$ is said to be \textit{$\Gamma$-transitive} if $\mathcal{F}$ is $A$-transitive for every $\Box \Box A \in \Gamma$.  
	\item $\mathcal{F}$ is called \textit{transitive} if $\mathcal{F}$ is $\MF$-transitive. 
\end{itemize}
\end{defn}

\begin{prop}\label{Prop:trans}
Let $A \in \MF$ and $\mathcal{F} = (W, \{\prec_B\}_{B \in \MF})$ be any $\N$-frame. 
If $\mathcal{F}$ is $A$-transitive, then $\Box A \to \Box \Box A$ is valid in $\mathcal{F}$. 
\end{prop}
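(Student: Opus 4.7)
The plan is a direct unfolding of the semantic clause for $\Box$, just as in the normal-modal case, except that the different boxes now live on different relations $\prec_{B}$. Fix an arbitrary $\N$-model $\mathcal{M} = (W, \{\prec_B\}_{B \in \MF}, \Vdash)$ based on the $A$-transitive frame $\mathcal{F}$, and let $x \in W$. I would assume $x \Vdash \Box A$ and show $x \Vdash \Box \Box A$.

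To unwind the conclusion, pick any $y \in W$ with $x \prec_{\Box A} y$; the goal becomes $y \Vdash \Box A$. For this, pick any $z \in W$ with $y \prec_A z$; it suffices to show $z \Vdash A$. Now the $A$-transitivity hypothesis applies exactly to the situation $x \prec_{\Box A} y$ and $y \prec_A z$, yielding $x \prec_A z$. Combined with the assumption $x \Vdash \Box A$ (which by definition means $w \Vdash A$ whenever $x \prec_A w$), this gives $z \Vdash A$, as required.

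There is no real obstacle here: the proposition is essentially the statement that the definition of $A$-transitivity has been chosen precisely so that the semantic argument for $\Box A \to \Box \Box A$ goes through. The only subtlety worth flagging in the write-up is that one must use the relation $\prec_{\Box A}$ in the outer step (because the outer modality in $\Box \Box A$ is $\Box(\Box A)$) and $\prec_A$ in the inner step, which is exactly the pair of relations mentioned in the definition of $A$-transitivity. No finiteness, no induction, and no appeal to Facts~\ref{Fact1} or \ref{Fact2} are needed.
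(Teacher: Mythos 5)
Your proof is correct and follows exactly the same route as the paper's: fix a model on the $A$-transitive frame, assume $x \Vdash \Box A$, unwind $x \prec_{\Box A} y$ and $y \prec_A z$, apply $A$-transitivity to get $x \prec_A z$, and conclude $z \Vdash A$. Your remark about which relation governs which occurrence of $\Box$ is the right point to flag, and nothing further is needed.
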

\begin{proof}
Suppose that $\mathcal{F}$ is $A$-transitive. 
Let $(\mathcal{F}, \Vdash)$ be any $\N$-model based on $\mathcal{F}$. 
Let $x \in W$ be any element with $x \Vdash \Box A$. 
Let $y, z \in W$ be such that $x \prec_{\Box A} y$ and $y \prec_A z$. 
Since $\mathcal{F}$ is $A$-transitive, we have $x \prec_A z$. 
Then, $z \Vdash A$. 
Since $z$ is an arbitrary element with $y \prec_A z$, we have $y \Vdash \Box A$. 
Also, we obtain $x \Vdash \Box \Box A$. 
We conclude that $\Box A \to \Box \Box A$ is valid in $\mathcal{F}$. 
\end{proof}

\begin{cor}\label{Cor:NF}
Let $A \in \MF$. 
\begin{enumerate}
	\item If $\NF \vdash A$, then $A$ is valid in all transitive $\N$-frames. 
	\item If $\NRF \vdash A$, then $A$ is valid in all transitive and serial $\N$-frames. 
\end{enumerate}
\end{cor}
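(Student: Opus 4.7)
The proof is a routine soundness-by-induction argument that combines Fact \ref{Fact1} with the two propositions immediately preceding the corollary. The plan is to verify that every axiom of $\NF$ (respectively $\NRF$) is valid in every transitive (respectively transitive and serial) $\N$-frame, and that every inference rule preserves such validity.

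For part (1), I would induct on the length of a proof in $\NF$. The base cases split into two groups. Propositional tautologies are valid in every $\N$-model since the satisfaction relation $\Vdash$ respects the Boolean connectives by definition, so they are a fortiori valid in every transitive $\N$-frame. For the axiom scheme $\Box B \to \Box \Box B$, note that if $\mathcal{F} = (W, \{\prec_B\}_{B \in \MF})$ is transitive then it is $B$-transitive for every $B \in \MF$; hence the proposition immediately preceding the corollary applies and gives the validity of $\Box B \to \Box \Box B$ in $\mathcal{F}$. The inductive step covers the two rules of $\N$: \textsc{MP} preserves validity in any single $\N$-model (hence in any frame), and \textsc{Nec} preserves frame-validity since if $B$ is valid in every $\N$-model based on $\mathcal{F}$, then for each $x \in W$ and each $y$ with $x \prec_B y$ one has $y \Vdash B$, so $x \Vdash \Box B$, for every model and every world.

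For part (2), I would extend the induction to the additional rule \textsc{Ros}. A transitive and serial $\N$-frame is in particular $A$-serial for every $A \in \MF$, so the proposition preceding Corollary \ref{Cor:NR} shows that the rule $\dfrac{\neg B}{\neg \Box B}$ preserves validity in such frames: if $\neg B$ is valid in every $\N$-model based on $\mathcal{F}$, then for each $x \in W$ seriality provides a $y$ with $x \prec_B y$, and then $y \Vdash \neg B$, so $x \Vdash \neg \Box B$. Combined with part (1) and with the validity of the $\N$-rules already checked, this completes the inductive argument.

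There is no real obstacle here; the corollary is essentially a bookkeeping consequence of the two validity propositions and Fact \ref{Fact1}. The only point worth being explicit about is that frame-validity quantifies over all $\N$-models over the given frame, which is exactly what makes \textsc{Nec} and \textsc{Ros} preserve frame-validity in the inductive step.
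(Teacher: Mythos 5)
Your proof is correct and is exactly the argument the paper intends: the corollary is left implicit there as an immediate consequence of the two unlabelled validity propositions, via the standard induction on proof length in which axioms are checked against the frame conditions and \textsc{MP}, \textsc{Nec}, and \textsc{Ros} are checked to preserve validity. The only cosmetic remark is that Fact \ref{Fact1} is not actually needed for this soundness direction, since propositional tautologies are valid in every $\N$-model directly from the definition of $\Vdash$.
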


Unlike the case of Kripke frames, the validity of $\Box A \to \Box \Box A$ in an $\N$-frame is not equivalent to the $A$-transitivity in general. 

\begin{prop}
There exists an $\N$-frame $\mathcal{F}$ satisfying the following conditions: 
\begin{enumerate}
	\item $\Box B \to \Box \Box B$ is valid in $\mathcal{F}$ for all $B \in \MF$. 
	\item For any $B, C_0, \ldots, C_{k-1} \in \MF$, if $\N \vdash \Box C_0 \land \cdots \land \Box C_{k-1} \to B$, then $\mathcal{F}$ is not $B$-transitive.
\end{enumerate} 
\end{prop}
\begin{proof}
Let $\mathcal{F} = (W, \{\prec_B\}_{B \in \MF})$ be the $\N$-frame defined as follows: 
\begin{itemize}
	\item $W : = \{a, b, c\}$, 
	\item \begin{itemize}
		\item If $\N \vdash \Box C_0 \land \cdots \land \Box C_{k-1} \to B$ for some $k$ and $C_0, \ldots, C_{k-1} \in \MF$, then $\prec_{B} : = \{(a, b), (b, c)\}$. 
		\item Otherwise, $\prec_{B} : = \{(a, b), (a, c), (b, c)\}$. 
	\end{itemize}
\end{itemize}

The second clause of the proposition immediately follows from the definition because $a \prec_{\Box B} b$ and $b \prec_B c$ for each $B \in \MF$. 
It suffices to show that $\Box B \to \Box \Box B$ is valid in $\mathcal{F}$ for all $B \in \MF$. 
Assume that $\N \nvdash \Box C_0 \land \cdots \land \Box C_{k-1} \to B$ for any $k$ and $C_0, \ldots, C_{k-1} \in \MF$. 
Then, it is shown that $\mathcal{F}$ is $B$-transitive. 
By Proposition \ref{Prop:trans}, $\Box B \to \Box \Box B$ is valid in $\mathcal{F}$. 

So, we may assume that $\N \vdash \Box C_0 \land \cdots \land \Box C_{k-1} \to B$ for some $C_0, \ldots, C_{k-1} \in \MF$. 
In this case, $\prec_{B} = \{(a, b), (b, c)\}$. 
Let $(\mathcal{F}, \Vdash)$ be any $\N$-model based on $\mathcal{F}$. 
For each $C \in \MF$, we have $c \Vdash \Box C$ because there is no $w \in W$ such that $c \prec_C w$. 
So, we have $c \Vdash \Box B$ and $c \Vdash \Box \Box B$. 
Moreover, we have $c \Vdash B$ since $c \Vdash \Box C_i$ for $i < k$. 
Then, we obtain $b \Vdash \Box B$ and $b \Vdash \Box \Box B$. 
Also, we obtain $a \Vdash \Box \Box B$. 
We conclude that $\Box B \to \Box \Box B$ is valid in $(\mathcal{F}, \Vdash)$. 
\end{proof}

We prove the completeness and finite frame property of the logics $\NR$, $\NF$, and $\NRF$. 
I also simultaneously give an alternative proof of Fact \ref{Fact1}.

\begin{thm}[The completeness and finite frame property of $\NR$]\label{Thm:complNR}
For any $A \in \MF$, the following are equivalent: 
\begin{enumerate}
	\item $\NR \vdash A$. 
	\item $A$ is valid in all serial $\N$-frames. 
	\item $A$ is valid in all finite serial $\N$-frames. 
	\item $A$ is valid in all finite $\Sub(A)$-serial $\N$-frames. 
\end{enumerate}
\end{thm}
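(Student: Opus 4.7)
The plan is to establish the cycle $1 \Rightarrow 2 \Rightarrow 3 \Rightarrow 4 \Rightarrow 1$. The implication $1 \Rightarrow 2$ is exactly Corollary \ref{Cor:NR}, and $2 \Rightarrow 3$ is immediate since every finite serial $\N$-frame is a serial $\N$-frame.

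For $3 \Rightarrow 4$, I would argue by contrapositive. Given a finite $\Sub(A)$-serial frame $\mathcal{F} = (W, \{\prec_B\}_{B \in \MF})$ and a model $(\mathcal{F}, \Vdash)$ in which $A$ fails at some world, define a new frame $\mathcal{F}^* = (W, \{\prec^*_B\}_{B \in \MF})$ by $\prec^*_B = \prec_B$ when $\Box B \in \Sub(A)$ and $\prec^*_B = W \times W$ otherwise. Then $\mathcal{F}^*$ is finite and serial. By Fact \ref{Fact2}, the model $(\mathcal{F}^*, \Vdash)$ assigns the same truth value to $A$ at every world, so $A$ fails in $\mathcal{F}^*$, contradicting 3.

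The substantive direction is $4 \Rightarrow 1$, which I would prove by contrapositive via a maximal consistent sets construction. Assume $\NR \nvdash A$, and set $\Sub^*(A) := \Sub(A) \cup \{\neg B : B \in \Sub(A)\}$. Let $W$ be the (finite) set of maximal $\NR$-consistent subsets of $\Sub^*(A)$. For each $\Box B \in \Sub(A)$, define
\[
	x \prec_B y \iff (\Box B \in x \Rightarrow B \in y),
\]
and fix $\prec_B$ arbitrarily when $\Box B \notin \Sub(A)$. Define $\Vdash$ on propositional variables by $x \Vdash p$ iff $p \in x$. A standard induction yields the truth lemma $x \Vdash B \iff B \in x$ for every $B \in \Sub^*(A)$; the nontrivial $\Box B$-clause uses that if $\Box B \notin x$, then (by maximality) $\neg \Box B \in x$, and since $\nvdash_\NR B$ (else \textsc{Nec} would force $\Box B$ into every consistent world), the set $\{\neg B\}$ extends to some $y \in W$ with $\neg B \in y$, which witnesses $x \prec_B y$ and $y \not\Vdash B$. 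Since $\{\neg A\}$ is $\NR$-consistent, some $x_0 \in W$ falsifies $A$.

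The main obstacle, and the whole point of introducing \textsc{Ros}, is verifying $\Sub(A)$-seriality of the constructed frame. Fix $x \in W$ and $\Box B \in \Sub(A)$. If $\Box B \notin x$, any $y \in W$ satisfies $x \prec_B y$ vacuously. If $\Box B \in x$, I need some $y \in W$ with $B \in y$; for this I apply the Rosser rule contrapositively. Since $\Box B \in x$ and $x$ is consistent, $\nvdash_\NR \neg \Box B$; were $\vdash_\NR \neg B$ to hold, then \textsc{Ros} would give $\vdash_\NR \neg \Box B$, a contradiction. Hence $\{B\}$ is $\NR$-consistent and extends to a maximal $\NR$-consistent $y \subseteq \Sub^*(A)$ with $B \in y$, giving $x \prec_B y$ as required.
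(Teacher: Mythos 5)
Your proposal is correct and follows essentially the same route as the paper: the same cycle of implications, the same reduction of $3 \Rightarrow 4$ via Fact \ref{Fact2} after padding the relations $\prec_B$ for $\Box B \notin \Sub(A)$ (you use $W \times W$ where the paper uses the diagonal; both are serial), and the same canonical-model construction for $4 \Rightarrow 1$ with $X \prec_B Y$ defined as $\Box B \in X \Rightarrow B \in Y$ and the Rosser rule invoked exactly where the paper invokes it to secure $\Sub(A)$-seriality when $\Box B \in X$. The remaining differences (working with $\neg B$ rather than the paper's $\sim B$ closure, phrasing $3 \Rightarrow 4$ contrapositively) are cosmetic.
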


\begin{thm}[The completeness and finite frame property of $\NF$]\label{Thm:complNF}
For any $A \in \MF$, the following are equivalent: 
\begin{enumerate}
	\item $\NF \vdash A$. 
	\item $A$ is valid in all transitive $\N$-frames. 
	\item $A$ is valid in all finite transitive $\N$-frames. 
	\item $A$ is valid in all finite $\Sub(A)$-transitive $\N$-frames. 
\end{enumerate}
\end{thm}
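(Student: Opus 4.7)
The plan is to close the cycle $(1) \Rightarrow (2) \Rightarrow (3) \Rightarrow (4) \Rightarrow (1)$. The implication $(1) \Rightarrow (2)$ is just Corollary \ref{Cor:NF}, and $(2) \Rightarrow (3)$ is immediate since every finite transitive $\N$-frame is transitive. For $(3) \Rightarrow (4)$, given a finite $\Sub(A)$-transitive frame $\mathcal{F} = (W, \{\prec_B\}_{B \in \MF})$, I would pass to the modified frame $\mathcal{F}^\ast$ in which $\prec^{\ast}_B = \prec_B$ whenever $\Box B \in \Sub(A)$ and $\prec^{\ast}_B = \emptyset$ otherwise. For any $B$, if $\Box\Box B \in \Sub(A)$ then transitivity at $B$ is inherited from $\Sub(A)$-transitivity, while if $\Box\Box B \notin \Sub(A)$ then $\prec^{\ast}_{\Box B} = \emptyset$ makes the condition vacuous, so $\mathcal{F}^\ast$ is fully transitive. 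Fact \ref{Fact2} then guarantees that every model on $\mathcal{F}$ refutes $A$ at some world precisely when the corresponding model on $\mathcal{F}^\ast$ does.

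The main step is $(4) \Rightarrow (1)$, which I would prove contrapositively via a finite canonical model over subformulas. Assuming $\NF \nvdash A$, let $W$ be the collection of all subsets $x \subseteq \Sub(A)$ that are maximally $\NF$-consistent relative to $\Sub(A)$, i.e.\ such that every $C \in \Sub(A)$ is decided by $x$. For each $\Box B \in \Sub(A)$ set
\[
  x \prec_B y \iff \bigl(\Box B \in x \Rightarrow B \in y\bigr),
\]
and $\prec_B = \emptyset$ for all other $B$; on propositional variables declare $x \Vdash p$ iff $p \in x$. Since $\NF \nvdash A$ there is some $x_0 \in W$ with $A \notin x_0$, so it suffices to prove the truth lemma $x \Vdash C \iff C \in x$ for every $C \in \Sub(A)$.

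The Boolean cases of the truth lemma are standard; for $C = \Box B$, the forward direction is immediate from the definition of $\prec_B$. For the backward direction, if $\Box B \notin x$ then $\NF \nvdash B$ (otherwise \textsc{Nec} would force $\Box B$ into every maximally consistent set), so $\{\neg B\}$ is $\NF$-consistent and extends to some $y \in W$ with $B \notin y$; since the defining condition of $\prec_B$ is vacuous when $\Box B \notin x$, this $y$ witnesses $x \nVdash \Box B$. For $\Sub(A)$-transitivity, suppose $\Box\Box B \in \Sub(A)$, $x \prec_{\Box B} y$, and $y \prec_B z$. If $\Box B \in x$, then the $\NF$-axiom $\Box B \to \Box\Box B$ together with maximality of $x$ and $\Box\Box B \in \Sub(A)$ yields $\Box\Box B \in x$; the first hypothesis gives $\Box B \in y$, and the second gives $B \in z$, so $x \prec_B z$.

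The main obstacle is the interplay between the $\NF$-axiom and the restriction to $\Sub(A)$: pushing $\Box B \to \Box\Box B$ through the maximal consistent set $x$ requires $\Box\Box B \in \Sub(A)$, and transitivity at $B$ is indeed required only in that case, so the matching is automatic but deserves careful verification. The remaining calculations are routine bookkeeping on maximal consistent sets over a finite fragment, and assembling everything gives a finite $\Sub(A)$-transitive $\N$-model in which $A$ fails at $x_0$, completing $(4) \Rightarrow (1)$.
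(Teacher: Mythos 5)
Your proof is correct and follows essentially the same route as the paper: the same cycle of implications, the same relation-modification argument for $(3 \Rightarrow 4)$ (the paper uses the diagonal relation where you use $\emptyset$ for $B$ with $\Box B \notin \Sub(A)$, which makes no difference for transitivity), and the same finite canonical model over $\Sub(A)$ with the accessibility relation $x \prec_B y \iff (\Box B \in x \Rightarrow B \in y)$ for $(4 \Rightarrow 1)$. The paper merely proves the $\NR$, $\NF$, and $\NRF$ cases simultaneously, which is why it needs the diagonal rather than the empty relation.
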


\begin{thm}[The completeness and finite frame property of $\NRF$]\label{Thm:complNRF}
For any $A \in \MF$, the following are equivalent: 
\begin{enumerate}
	\item $\NRF \vdash A$. 
	\item $A$ is valid in all transitive and serial $\N$-frames. 
	\item $A$ is valid in all finite transitive and serial $\N$-frames. 
	\item $A$ is valid in all finite $\Sub(A)$-transitive and $\Sub(A)$-serial $\N$-frames. 
\end{enumerate}
\end{thm}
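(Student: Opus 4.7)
My plan is to establish the cycle $(1)\Rightarrow(2)\Rightarrow(3)\Rightarrow(4)\Rightarrow(1)$. The soundness direction $(1)\Rightarrow(2)$ is already contained in Corollary \ref{Cor:NF}(2), which covers the scheme $\Box B \to \Box \Box B$ on transitive frames, together with the proposition preceding Corollary \ref{Cor:NR}, which validates the Rosser rule on serial frames. The implication $(2)\Rightarrow(3)$ is immediate since every finite transitive and serial $\N$-frame belongs to the class of all transitive and serial $\N$-frames.

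For $(3)\Rightarrow(4)$, let $\mathcal{F}=(W,\{\prec_B\}_{B\in\MF})$ be a finite $\Sub(A)$-transitive and $\Sub(A)$-serial $\N$-frame, and extend it to $\mathcal{F}^*=(W,\{\prec_B^*\}_{B\in\MF})$ by setting $\prec_B^* := \prec_B$ when $\Box B\in\Sub(A)$ and $\prec_B^* := \{(x,x)\mid x\in W\}$ otherwise. Since $\Box\Box C\in\Sub(A)$ forces $\Box C\in\Sub(A)$, a brief case analysis on whether $\Box C$ and $\Box\Box C$ lie in $\Sub(A)$ shows that $\mathcal{F}^*$ is fully transitive and serial. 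By Fact \ref{Fact2}, any $\N$-model based on $\mathcal{F}^*$ agrees with the corresponding $\N$-model on $\mathcal{F}$ in the truth value of $A$ at every world, so validity of $A$ transfers from $\mathcal{F}^*$ to $\mathcal{F}$.

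The substantive direction is $(4)\Rightarrow(1)$. Assume $\NRF\nvdash A$. I would build a finite $\Sub(A)$-transitive and $\Sub(A)$-serial $\N$-model refuting $A$ by combining the canonical-model constructions used for Theorems \ref{Thm:complNR} and \ref{Thm:complNF}. Let $W$ consist of the maximal $\NRF$-consistent subsets of a suitable closure of $\Sub(A)$ under subformulas and negation, and set $x\Vdash p \iff p\in x$ for propositional variables. For each $\Box B\in\Sub(A)$, I would define a relation $\prec_B$ securing the truth-lemma $x\Vdash\Box B \iff \Box B\in x$. Seriality is handled by the Rosser rule: its contrapositive ensures that if $\Box B\in x$ for some consistent $x$ then $\neg B$ is not an $\NRF$-theorem, so a maximal consistent set containing $B$ exists; dually, when $\neg\Box B\in x$, \textsc{Nec} guarantees that $B$ is not a theorem, so a maximal consistent set containing $\neg B$ exists. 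Transitivity on $\Sub(A)$ is forced by $\Box B\to\Box\Box B$: whenever $\Box B\in x$ one also has $\Box\Box B\in x$, so any $y$ with $x\prec_{\Box B}y$ must contain $\Box B$, whence any $z$ with $y\prec_B z$ contains $B$, which is exactly what is required to place $z$ among the $\prec_B$-successors of $x$.

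The main obstacle is coordinating the family $\{\prec_B\}_{\Box B\in\Sub(A)}$ so that the truth-lemma, $\Sub(A)$-seriality, and $\Sub(A)$-transitivity all hold simultaneously; the inclusion $\prec_{\Box B}\circ\prec_B\subseteq\prec_B$ links two distinct relations and can conflict with the witness clauses needed for seriality. I expect to process the relations in order of increasing $\Box$-depth of $B$, first fixing a preliminary $\prec_B$ yielding the truth-lemma, then enlarging it to close under the $\prec_{\Box B}$-composition, and finally checking via $\Box B\to\Box\Box B$ that this closure neither breaks the truth-lemma for $\Box\Box B$ nor invalidates the seriality witnesses. Finiteness of $W$ is automatic from the finiteness of the closure of $\Sub(A)$.
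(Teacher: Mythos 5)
Your overall architecture --- the cycle $(1)\Rightarrow(2)\Rightarrow(3)\Rightarrow(4)\Rightarrow(1)$, the soundness step via the two frame-condition propositions, and the $(3)\Rightarrow(4)$ reduction that pads the frame with identity relations for $\prec_B$ with $\Box B \notin \Sub(A)$ and invokes Fact \ref{Fact2} --- coincides with the paper's proof. The gap is in $(4)\Rightarrow(1)$: you never define the accessibility relations. You say you ``would define a relation $\prec_B$ securing the truth-lemma,'' correctly identify that the truth lemma, $\Sub(A)$-seriality, and $\Sub(A)$-transitivity must hold simultaneously, call their coordination ``the main obstacle,'' and then propose an iterated repair (build a preliminary $\prec_B$, close it under composition with $\prec_{\Box B}$, then ``check'' that neither the truth lemma nor the seriality witnesses break). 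That checking step is exactly where the mathematical content lies, and it is not carried out; as stated, enlarging $\prec_B$ after the truth lemma has been established could in principle add a pair $(x,z)$ with $\Box B \in x$ but $B \notin z$, and you give no argument ruling this out for whatever preliminary relation you start from.

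The paper shows that no coordination is needed at all: take $W$ to be the $A$-maximally consistent subsets of $\Sub(A) \cup \{{\sim}B \mid B \in \Sub(A)\}$ and define, uniformly for every $B$,
\[
X \prec_B Y \;:\iff\; \Box B \notin X \ \text{or}\ B \in Y.
\]
With this single definition the truth lemma is immediate in both directions (the $\Rightarrow$ direction uses \textsc{Nec} to get a consistent $\{{\sim}C\}$ and the vacuous truth of $X \prec_C Y$ when $\Box C \notin X$); seriality for $\Box B \in \Sub(A)$ follows by your own case split, with \textsc{Ros} supplying the witness when $\Box B \in X$; and transitivity follows because $\Box B \in X$ gives $\Box\Box B \in X$ by the axiom and maximality, whence $X \prec_{\Box B} Y$ forces $\Box B \in Y$ and $Y \prec_B Z$ forces $B \in Z$, which is literally the membership condition for $X \prec_B Z$. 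Your sketched arguments for seriality and transitivity are correct in content but tacitly presuppose this very definition; supplying it (and dropping the depth-stratified closure procedure) turns your outline into the paper's proof.
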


\begin{proof}
We prove Fact \ref{Fact1}, Theorems \ref{Thm:complNR}, \ref{Thm:complNF}, and \ref{Thm:complNRF} simultaneously. 
Let $L$ be one of $\NR$, $\NF$, and $\NRF$. 
Assume, however, that the statement (4) is the same as (3) when $L = \N$.

$(1 \Rightarrow 2)$: This is already proved in Corollaries \ref{Cor:NR} and \ref{Cor:NF}. 

$(2 \Rightarrow 3)$: Obvious. 

$(3 \Rightarrow 4)$: Suppose that $A$ is valid in all finite $\N$-frames satisfying the corresponding conditions. 
Let $\mathcal{M} = (W, \{\prec_B\}_{B \in \MF}, \Vdash)$ be any finite $\N$-model whose frame $\mathcal{F} =  (W, \{\prec_B\}_{B \in \MF})$ satisfies the corresponding conditions restricted to $\Sub(A)$. 
For example, if $L = \NF$, then $\mathcal{F}$ is $\Sub(A)$-transitive. 
For each $B \in \MF$, let $\prec^*_B$ be the binary relation on $W$ defined as follows: 
\[
	\prec^*_B : = \begin{cases} \prec_B & \text{if}\ \Box B \in \Sub(A), \\
	\{(x, x) \mid x \in W\} & \text{otherwise}.
	\end{cases}
\]
Let $\mathcal{F}^* := (W, \{\prec^*_B\}_{B \in \MF})$. 

\begin{cl}
If $L \in \{\NR, \NRF\}$, then $\mathcal{F}^*$ is serial. 
\end{cl}
\begin{proof}
Let $x \in W$ and $B \in \MF$. 
\begin{itemize}
	\item If $\Box B \in \Sub(A)$, then there exists a $y \in W$ such that $x \prec_B y$ because $\mathcal{F}$ is $\Sub(A)$-serial. 
	Thus, $x \prec^*_B y$. 
	\item If $\Box B \notin \Sub(A)$, then $x \prec^*_B x$. 
\end{itemize}
We have proved that $\mathcal{F}^*$ is $B$-serial. 
\end{proof}

\begin{cl}
If $L \in \{\NF, \NRF\}$, then $\mathcal{F}^*$ is transitive. 
\end{cl}
\begin{proof}
Let $x, y, z \in W$ and $B \in \MF$ be such that $x \prec^*_{\Box B} y$ and $y \prec^*_B z$. 
\begin{itemize}
	\item If $\Box \Box B \in \Sub(A)$, then $\Box B \in \Sub(A)$, and hence $x \prec_{\Box B} y$ and $y \prec_B z$. 
	Since $\mathcal{F}$ is $\Sub(A)$-transitive, we have $x \prec_B z$. 
	Thus, $x \prec^*_B z$. 
	\item If $\Box \Box B \notin \Sub(A)$, then $x = y$ by the definition of $\prec^*_{\Box B}$. 
		Since $y \prec^*_B z$, we obtain $x \prec^*_B z$.  
\end{itemize}
We have proved that $\mathcal{F}^*$ is $B$-transitive. 
\end{proof}

Therefore, $\mathcal{F}^*$ is a finite $\N$-frame satisfying the corresponding conditions. 
Let $\Vdash^*$ be the satisfaction relation on $\mathcal{F}^*$ defined by $x \Vdash^* p : \iff x \Vdash p$. 
By the supposition, $A$ is valid in $\mathcal{F}^*$. 
In particular, $A$ is valid in $(\mathcal{F}^*, \Vdash^*)$. 
Since $\prec^*_B = \prec_B$ for any $B \in \MF$ with $\Box B \in \Sub(A)$, by Fact \ref{Fact2}, $A$ is also valid in $\mathcal{M}$. 

$(4 \Rightarrow 1)$: We prove the contrapositive. 
Suppose $L \nvdash A$, and we would like to find a corresponding finite $\N$-frame in which $A$ is not valid. 

For each formula $B \in \MF$, let ${\sim}B$ be $C$ if $B$ is of the form $\neg C$ and $\neg B$ otherwise. 
Let $\num{\Sub(A)} : = \Sub(A) \cup \{{\sim}B \mid B \in \Sub(A)\}$. 
We say that $X \subseteq \num{\Sub(A)}$ is \textit{$L$-consistent} if $L \nvdash \neg \bigwedge X$ where $\bigwedge X$ is a conjunction of all elements of $X$. 
Also, $X$ is called \textit{$A$-maximally $L$-consistent} if $X$ is maximal among $L$-consistent subsets of $\num{\Sub(A)}$. 
It is easily shown that every $L$-consistent subset $X$ of $\num{\Sub(A)}$ is extended to an $A$-maximally $L$-consistent set. 

We define the $\N$-model $\mathcal{M} = (W, \{\prec_B\}_{B \in \MF}, \Vdash)$ as follows: 
\begin{itemize}
	\item $W: = \{X \subseteq \num{\Sub(A)} \mid X$ is $A$-maximally $L$-consistent$\}$; 
	\item For $X, Y \in W$, $X \prec_B Y : \iff \Box B \notin X$ or $B \in Y$; 
	\item For each propositional variable $p$ and $X \in W$, $X \Vdash p : \iff p \in X$. 
\end{itemize}

Let $n$ be the number of elements of $\num{\Sub(A)}$. 
We have that the number of elements of $W$ is smaller than $2^n$. 
Since $L \nvdash A$, $\{{\sim}A\}$ is $L$-consistent. 
So, we have $X_A \in W$ such that ${\sim}A \in X_A$. 

\begin{cl}\label{TL}
For any $X \in W$ and $B \in \num{\Sub(A)}$, 
\[
	X \Vdash B \iff B \in X.
\]
\end{cl}
\begin{proof}
We prove the claim by induction on the construction of $B$. 
We only give a proof of the case that $B$ is of the form $\Box C$. 

$(\Rightarrow)$: We prove the contrapositive. 
Suppose $\Box C \notin X$. 
Since $X$ is maximal, $\neg \Box C \in X$. 
Assume, towards a contradiction, that $\{{\sim}C\}$ is $L$-inconsistent. 
Then, $L \vdash C$. 
By \textsc{Nec}, $L \vdash \Box C$. 
This contradicts the $L$-consistency of $X$. 

We proved that $\{{\sim}C\}$ is $L$-consistent. 
Let $Y \in W$ be such that $\{{\sim}C\} \subseteq Y$. 
Since $\Box C \notin X$, we have $X \prec_C Y$ by the definition of $\prec_C$. 
Since ${\sim}C \in Y$, we have $C \notin Y$. 
By the induction hypothesis, $Y \nVdash C$.
We conclude that $X \nVdash \Box C$. 

$(\Leftarrow)$: Suppose $\Box C \in X$. 
Let $Y \in W$ be such that $X \prec_C Y$. 
By the definition of $\prec_C$, we have $C \in Y$. 
By the induction hypothesis, $Y \Vdash C$. 
Hence, $X \Vdash \Box C$. 

\end{proof}

Since $A \notin X_A$, by Claim \ref{TL}, we obtain $X_A \nVdash A$. 
Therefore, $A$ is not valid in $\mathcal{M}$. 
This completes the proof of Fact \ref{Fact1} for $L = \N$.

\begin{cl}
If $L \in \{\NR, \NRF\}$, then $(W, \{\prec_B\}_{B \in \MF})$ is $\Sub(A)$-serial. 
\end{cl}
\begin{proof}
Let $X \in W$ and $\Box B \in \Sub(A)$. 
We distinguish the following two cases: 
\begin{itemize}
	\item Case 1: $\Box B \notin X$. \\
	By the definition of $\prec_B$, we have $X \prec_B X$. 
	
	\item Case 2: $\Box B \in X$. \\
	Suppose, towards a contradiction, that $\{B\}$ is $L$-inconsistent. 
	Then, $L \vdash \neg B$. 
	By the rule \textsc{Ros}, we have $L \vdash \neg \Box B$. 
	This contradicts the $L$-consistency of $X$. 
	Hence, $\{B\}$ is $L$-consistent and there exists a $Y \in W$ such that $B \in Y$. 
	By the definition of $\prec_B$, $X \prec_B Y$. 
\end{itemize}
In either case, we have a $Y \in W$ such that $X \prec_B Y$. 
We conclude that $(W, \{\prec_B\}_{B \in \MF})$ is $\Sub(A)$-serial. 
\end{proof}

\begin{cl}
If $L \in \{\NF, \NRF\}$, then $(W, \{\prec_B\}_{B \in \MF})$ is $\Sub(A)$-transitive. 
\end{cl}
\begin{proof}
Let $X, Y, Z \in W$ and $\Box \Box B \in \Sub(A)$ be such that $X \prec_{\Box B} Y$ and $Y \prec_B Z$. 
If $\Box B \notin X$, then trivially $X \prec_B Z$ by the definition of $\prec_B$. 
If $\Box B \in X$, then $\Box \Box B \in X$ because $L \vdash \Box B \to \Box \Box B$. 
Since $X \prec_{\Box B} Y$, we have $\Box B \in Y$. 
Also, since $Y \prec_B Z$, we have $B \in Z$. 
By the definition of $\prec_B$, we obtain $X \prec_B Z$. 
Therefore, $(W, \{\prec_B\}_{B \in \MF})$ is $\Sub(A)$-transitive. 
\end{proof}

Our proof is finished. 
\end{proof}

Furthermore, from our proofs of Fact \ref{Fact1} and Theorems \ref{Thm:complNR}, \ref{Thm:complNF}, and \ref{Thm:complNRF}, we obtain that the sets of all theorems of $\N$, $\NR$, $\NF$, and $\NRF$ are primitive recursive. 
For example, to show that $A \in \MF$ is $\NF$-unprovable, it is sufficient to find a finite fragment $(W, \{\prec_B\}_{\Box B \in \Sub(A)}, \Vdash)$ of a $\Sub(A)$-transitive $\N$-model in which $A$ is false such that the cardinality of $W$ is smaller than $2^{2n}$ where $n$ is the number of subformulas of $A$. 
Thus, a primitive recursive algorithm that searches for such finite structures determines whether each $A \in \MF$ is provable in $\NF$ or not. 

\section{Arithmetical completeness of $\N$}\label{Sec:N}

It is easy to see that for any provability predicate $\PR_T(x)$ of $T$, $\N \subseteq \PL(\PR_T)$. 
Moreover, by our definition of provability predicates, we have the following theorem: 

\begin{thm}[The arithmetical soundness of $\N$]\label{Thm:AS_N}
For any $A \in \MF$, any provability predicate $\PR_T(x)$ of $T$, and any arithmetical interpretation $f$ based on $\PR_T(x)$, if $\N \vdash A$, then $\PA \vdash f(A)$. 
\end{thm}

In this section, we prove that $\N$ is exactly the provability logic of all provability predicates. 
Moreover, we prove that $\N$ is one of the logics considered in Problem \ref{MProb2}, namely, there exists a $\Sigma_1$ provability predicate $\PR_T(x)$ of $T$ such that $\N = \PL(\PR_T)$.

\begin{thm}[The uniform arithmetical completeness of $\N$]\label{Thm:N}
There exist a $\Sigma_1$ provability predicate $\PR_T(x)$ of $T$ and an arithmetical interpretation $f$ based on $\PR_T(x)$ such that for any $A \in \MF$, $\N \vdash A$ if and only if $T \vdash f(A)$.
\end{thm}

Before proving the theorem, we prepare a primitive recursive function $h$ which plays an important role in our proofs of the theorems in this paper. 
The function $h$ was originally introduced in \cite{Kur20} to prove the existence of a Rosser provability predicate whose provability logic is exactly the logic $\mathsf{KD}$. 

We say that an $\LA$-formula is \textit{propositionally atomic} if it is not a Boolean combination of its proper subformulas. 
For each propositionally atomic formula $\varphi$, we prepare a propositional variable $p_\varphi$. 
We define the primitive recursive mapping $I$ from $\LA$-formulas to propositional formulas as follows: 
\begin{enumerate}
	\item For each propositionally atomic formula $\varphi$, $I(\varphi)$ is $p_\varphi$; 
	\item $I(\neg \varphi)$ is $\neg I(\varphi)$; 
	\item $I(\varphi \circ \psi)$ is $I(\varphi) \circ I(\psi)$ for $\circ \in \{\land, \lor, \to\}$. 
\end{enumerate}
It is clear that $I$ is an injection. 
Let $\varphi$ be an $\LA$-formula and $X$ be a finite set of $\LA$-formulas. 
We say that $\varphi$ is a \textit{tautological consequence} (\textit{t.c.}) of $X$ if $\bigwedge_{\psi \in X} I(\psi) \to I(\varphi)$ is a tautology. 
The method of constructing Rosser provability predicates satisfying $\D{2}$ using truth assignments of classical propositional logic is due to Arai \cite{Ara}, and the idea of using t.c.'s is from \cite{Kur20}.

For each natural number $n$, let $P_{T, n}$ be the set of all $\LA$-formulas having a $T$-proof with the G\"odel number less than or equal to $n$. 
It is proved that the set $\{(n, \varphi) \mid \varphi$ is a t.c.~of $P_{T, n}\}$ is primitive recursive. 
The above notions and sets are formalized in $\PA$. 
In particular, we suppose that $P_{T, n}$ is formalized by using the proof predicate $\Proof_T(x, y)$.

The function $h$ is defined as follows by using the formalized recursion theorem: 

\begin{itemize}
	\item $h(0) = 0$. 
	\item $h(m+1) = \begin{cases} i & \text{if}\ h(m) = 0\\
				& \quad \&\ i = \min \{j \in \omega \setminus \{0\} \mid \neg S(\num{j}) \ \text{is a t.c.~of}\ P_{T, m}\}, \\
			h(m) & \text{otherwise}.
		\end{cases}$
\end{itemize}
Here, $S(x)$ is the $\Sigma_1$ formula $\exists y(h(y) = x)$. 
Unlike the Solovay function by the same name used in the proof of Solovay's arithmetical completeness theorem, our function $h$ does not track the structure of models, but is simply used to refer to the numbers $m$ and $i$ such that $h(m) = 0$ and $h(m + 1) = i \neq 0$. 
For such $m$ and $i$, it is shown that $i \leq m$. 
It follows that $h$ is a primitive recursive function (See \cite[p.~603]{Kur20} for details). 
It is also shown that the following proposition holds. 

\begin{prop}[Cf.~{\cite[Lemma 3.2.]{Kur20}}]\label{Prop:h}
\leavevmode
\begin{enumerate}
	\item $\PA \vdash \forall x \forall y(0 < x < y \land S(x) \to \neg S(y))$. 
	\item $\PA \vdash \neg \Con_T \leftrightarrow \exists x(S(x) \land x \neq 0)$, where $\Con_T$ is the $\Pi_1$ consistency statement $\neg \Prov_T(\gn{0=1})$. 
	\item For each $i \in \omega \setminus \{0\}$, $T \nvdash \neg S(\num{i})$. 
	\item For each $n \in \omega$, $\PA \vdash \forall x \forall y(h(x) = 0 \land h(x+1) = y \land y \neq 0 \to x > \num{n})$. 
\end{enumerate}
\end{prop}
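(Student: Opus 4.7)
The plan is to establish the four clauses in sequence, exploiting the key monotonicity property of $h$: by definition, once $h(m) \neq 0$, the recursion falls into the ``otherwise'' clause at stage $m+1$, so $h(m+1) = h(m)$, and by induction $h$ attains at most one non-zero value throughout its run. For clause (1), I would formalize this monotonicity in $\PA$ by induction to show $h(z) = x \land x \neq 0 \land z' \geq z \to h(z') = x$, from which the uniqueness of non-zero values, and hence the stated implication, follows immediately. For clause (2), the forward direction reasons inside $\PA$ from $\neg \Con_T$: every $\LA$-formula is then $T$-provable, so $\neg S(\num{1})$ acquires a $T$-proof and becomes a tautological consequence of $P_{T, m}$ for some $m$, forcing $h$ to transition to a non-zero value by stage $m+1$. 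The backward direction uses formalized $\Sigma_1$-completeness: if $h(m+1) = x \neq 0$ with $h(m) = 0$, the defining clause gives $T \vdash \neg S(\num{x})$; simultaneously $S(x)$ is witnessed by $m+1$, so $\Sigma_1$-completeness yields $T \vdash S(\num{x})$, producing $\neg \Con_T$.

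Clause (3) is proved externally by contradiction. Suppose $T \vdash \neg S(\num{i})$ for some $i \neq 0$; take $n_0$ large enough that $\neg S(\num{i}) \in P_{T, n_0}$, so $\neg S(\num{i})$ is a tautological consequence thereof. Now examine the run of $h$ through stage $n_0+1$: either $h$ has already become non-zero at some earlier stage, or $h(n_0) = 0$, in which case the defining clause forces $h(n_0+1)$ to be the minimum $j \leq i$ for which $\neg S(\num{j})$ is a tautological consequence of $P_{T, n_0}$. In either case, $h$ attains some non-zero value $j$, so we obtain both $T \vdash \neg S(\num{j})$ (from the tautological consequence) and $T \vdash S(\num{j})$ (from $\Sigma_1$-completeness applied to the true sentence $S(j)$), contradicting the consistency of $T$.

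For clause (4), observe externally that $T \nvdash \neg S(\num{j})$ holds for every $j \in \omega$: for $j \neq 0$ this is exactly clause (3), and for $j = 0$ it follows from the consistency of $T$ together with the truth of $S(0)$ (witnessed by $h(0) = 0$). Hence no $\neg S(\num{j})$ is a tautological consequence of any $P_{T, m}$, so externally $h(m) = 0$ for every $m \in \omega$. Then for each fixed standard $n$, the finitely many $\Delta_0$ facts $h(\num{m}) = \num{0}$ for $m \leq n$ are each provable in $\PA$, and their conjunction immediately implies the desired bounded universal statement. The main subtlety throughout will be the careful formalization of the $\Sigma_1$-completeness principle inside $\PA$ that is used in clauses (2) and (3), together with the verification that $h$ is genuinely primitive recursive via the recursion theorem, so that both the ``otherwise'' clause and the inner minimum operator are handled inside $\PA$ as intended.
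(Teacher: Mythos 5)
Your proof is correct and follows the standard argument for this lemma (the paper itself gives no proof, only the citation to \cite{Kur20}): provable monotonicity of $h$ once it leaves $0$ for clause 1, formalized $\Sigma_1$-completeness combined with the defining clause of $h$ for clause 2, and the external observation that $h$ is identically $0$ (else $T$ would prove both $S(\num{j})$ and $\neg S(\num{j})$ for some $j \neq 0$) for clauses 3 and 4. The only nit is an off-by-one in clause 4: to rule out $h(x+1) \neq 0$ for $x \leq \num{n}$ you need $\PA \vdash h(\num{m}) = 0$ for all $m \leq n+1$, not just for $m \leq n$.
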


We are ready to prove Theorem \ref{Thm:N}. 

\begin{proof}[Proof of Theorem \ref{Thm:N}]
Let $\langle A_n \rangle_{n \in \omega}$ be a primitive recursive enumeration of all $\N$-unprovable $\LB$-formulas. 
For each $n \in \omega$, let $(W_n, \{\prec_{n, B}\}_{B \in \MF}, \Vdash_n)$ be a primitive recursively constructed finite $\N$-model falsifying $A_n$ (See Fact \ref{Fact2} and the comments in the last paragraph of Section \ref{Sec:Compl}). 
We may assume that $\{W_n\}_{n \in \omega}$ is a pairwise disjoint family of subsets of $\omega$ and $\bigcup_{n \in \omega} W_n = \omega \setminus \{0\}$. 
We may also assume that for each $i > 0$, we can primitive recursively find the unique $n$ such that $i \in W_n$. 
Let $\mathcal{M} = (W, \{\prec_B\}_{B \in \MF}, \Vdash)$ be an $\N$-model defined as follows: 
\begin{itemize}
	\item $W : = \bigcup_{n \in \omega} W_n = \omega \setminus \{0\}$. 
	\item $x \prec_B y : \iff x, y \in W_n$ and $x \prec_{n, B} y$ for some $n \in \omega$. 
	\item $x \Vdash p :\iff x \in W_n$ and $x \Vdash_n p$ for some $n \in \omega$. 
\end{itemize}
We may assume that $\mathcal{M}$ is primitive recursively represented in $\PA$. 
Moreover, we assume that $\PA$ proves basic properties of $\mathcal{M}$. 

For each primitive recursive function $g$ enumerating all theorems of $T$, let $\PR_g(x)$ be the $\Sigma_1$ formula $\exists y(g(y) = x \land \Fml(x))$. 
Then, $\PR_g(x)$ is a provability predicate of $T$. 
We define the arithmetical interpretation $f_g$ based on $\PR_g(x)$ by $f_g(p): \equiv \exists x(S(x) \land x \neq 0 \land x \Vdash p)$. 
From an index of such a function $g$ and $A \in \MF$, the $\LA$-sentence $f_g(A)$ is primitive recursively computed. 
Furthermore, it is shown that each $f_g$ is an injective mapping, and so from an index of $g$ and $f_g(A)$, the $\LB$-formula $A$ is recovered primitive recursively.

Next, we define the primitive recursive function $g_0$ enumerating all theorems of $T$. 
The definition of $g_0$ consists of two procedures. 
The definition starts with Procedure 1. 
The values of $g_0$ are defined step by step in the procedure by referring to $T$-proofs according to the proof predicate $\Proof_T(x, y)$. 
At the first time the value of the function $h$ is non-zero, the definition of $g_0$ switches to Procedure 2. 
By using the formalized recursion theorem, the arithmetical interpretation $f_{g_0}$ based on the provability predicate $\PR_{g_0}(x)$ is used in the definition of $g_0$. 
In the construction, we identify each $\LA$-formula with its G\"odel number. 

\vspace{0.1in}

\textsc{Procedure 1}\\
Stage $m$. 
\begin{itemize}
	\item If $h(m+1) = 0$, then
\[
	g_0(m) = \begin{cases} \varphi & \text{if} \ m\ \text{is a}\ T\text{-proof of}\ \varphi, \\
	0 & \text{otherwise.}
	\end{cases}
\]
Go to Stage $m+1$. 

	\item If $h(m+1) \neq 0$, then go to Procedure 2. 
\end{itemize}

\textsc{Procedure 2}\\
Let $m$, $i \neq 0$ and $n$ be such that $h(m) = 0$, $h(m+1) = i$, and $i \in W_n$. 
Recall from Section \ref{Sec:Pre} that $\{\xi_t\}_{t \in \omega}$ is the primitive recursive enumeration of all $\LA$-formulas arranged in ascending order of G\"odel numbers. 
Define 
\[
	g_0(m+t) = \begin{cases} \xi_t & \text{if}\ \xi_t \equiv f_{g_0}(B)\ \&\ i \Vdash_n \Box B\ \text{for some}\ \Box B \in \Sub(A_n), \\
	0 & \text{otherwise.}
	\end{cases}
\]
The definition of $g_0$ is finished.

\begin{cl}\label{NCL}
$\PA + \Con_T \vdash \forall x \forall y \Bigl(\Fml(x) \to \bigl(\Proof_T(x, y) \leftrightarrow x = g_0(y) \bigr) \Bigr)$. 
\end{cl}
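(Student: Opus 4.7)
The plan is to argue that under $\Con_T$ the auxiliary function $h$ vanishes identically, which forces the definition of $g_0$ to stay in Procedure 1 for every stage; once we know this, $g_0(y)$ is literally the conclusion of the $T$-proof coded by $y$ (or $0$ when $y$ is not a proof code), so the equivalence with $\Proof_T$ is immediate.

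First, I would reason inside $\PA + \Con_T$ and apply Proposition \ref{Prop:h}(2): from $\neg \Con_T \leftrightarrow \exists x(S(x) \land x \neq 0)$ we obtain $\forall x(S(x) \to x = 0)$, and unfolding $S(x) \equiv \exists y(h(y) = x)$ yields $\forall m\, h(m) = 0$. In particular $h(m+1) = 0$ for every $m$, so at every stage the branching condition in the definition of $g_0$ selects the first clause of Procedure 1; the ``go to Procedure 2'' branch is never triggered. This reduces the definition, provably in $\PA + \Con_T$, to
\[
    g_0(m) = \begin{cases} \varphi & \text{if } \Proof_T(\varphi, m), \\ 0 & \text{otherwise.} \end{cases}
\]

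Next, I would fix $x, y$ with $\Fml(x)$ and verify both directions. Since G\"odel numbers of $\LA$-formulas are nonzero, $\Fml(x)$ implies $x \neq 0$. If $\Proof_T(x, y)$ holds, then by the first clause $g_0(y) = x$. Conversely, if $g_0(y) = x$, then $g_0(y) \neq 0$, so the first clause must have applied at stage $y$, giving some $\varphi$ with $\Proof_T(\varphi, y)$ and $g_0(y) = \varphi$; hence $\varphi = x$ and $\Proof_T(x, y)$. Both implications are straightforward $\PA$-verifiable consequences of the definition of $g_0$ together with the well-known functionality of the proof predicate (each proof code determines a unique conclusion).

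I do not expect any genuine obstacle here: the only thing that needs care is making sure the case analysis on $h(m+1) = 0$ versus $h(m+1) \neq 0$ is carried out uniformly in $m$ inside $\PA + \Con_T$, which is handled once and for all by the universal consequence $\forall m\, h(m) = 0$ obtained from Proposition \ref{Prop:h}(2). Everything else is a routine unfolding of Procedure 1 and of the standard properties of $\Proof_T$.
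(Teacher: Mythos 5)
Your proposal is correct and takes essentially the same route as the paper's own proof: both derive $\forall m\, h(m)=0$ inside $\PA+\Con_T$ from Proposition \ref{Prop:h}.2, conclude that the construction of $g_0$ never leaves Procedure 1, and then read off the equivalence with $\Proof_T$ from the first clause of that procedure. The paper leaves the final unfolding implicit (``it is shown that\dots''), while you spell out both directions, including the point that $\Fml(x)$ forces $x\neq 0$ so that the output $0$ is unambiguous --- exactly the detail the paper's terse phrasing presupposes.
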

\begin{proof}
We argue in $\PA + \Con_T$: 
By Proposition \ref{Prop:h}.2, $h(x) = 0$ for all $x$. 
Thus, the construction of $g_0$ never switches to Procedure 2. 
Then, for any $\LA$-formula $\varphi$ and number $a$, we have that $a$ is a $T$-proof of $\varphi$ if and only if $\varphi = g_0(a)$. 
\end{proof}

Then, it is shown that for any $\LA$-formula $\varphi$ and $n \in \omega$, $\PA \vdash \Proof_T(\gn{\varphi}, \num{n})$ if and only if $\PA \vdash \gn{\varphi} = g_0(\num{n})$. 
It follows that $\PR_{g_0}(x)$ is a $\Sigma_1$ provability predicate of $T$. 

\begin{cl}\label{Cl:g_0}
Let $i \in W_n$ and $B \in \Sub(A_n)$. 
\begin{enumerate}
	\item If $i \Vdash_n B$, then $\PA \vdash S(\num{i}) \to f_{g_0}(B)$. 
	\item If $i \nVdash_n B$, then $\PA \vdash S(\num{i}) \to \neg f_{g_0}(B)$. 
\end{enumerate}
\end{cl}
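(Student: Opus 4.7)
The plan is to prove both parts simultaneously by induction on $B \in \Sub(A_n)$, reasoning inside $\PA$ under the hypothesis $S(\num{i})$. From this hypothesis, the monotonic behaviour of $h$ gives a unique stage $m_0$ with $h(m_0) = 0$ and $h(m_0 + 1) = i$, and Proposition \ref{Prop:h}.1 guarantees that $\num{i}$ is the only positive witness to $S$. All atomic facts about the primitive recursively represented model $\mathcal{M}$ are decided correctly by $\PA$.

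For the atomic case $B = p$, when $i \Vdash_n p$ the assignment $x := \num{i}$ witnesses $f_{g_0}(p) = \exists x(S(x) \land x \Vdash p)$; when $i \nVdash_n p$ any candidate witness $x$ is either $0$ (excluded since $0 \notin W$) or equal to $\num{i}$ by Proposition \ref{Prop:h}.1 (excluded since $i \nVdash_n p$). The propositional connective cases follow immediately from the induction hypothesis. For the modal case $B = \Box C$ with $i \Vdash_n \Box C$, Procedure 2 at stage $m_0$ directly assigns $g_0(m_0 + t) = \xi_t = f_{g_0}(C)$, where $t$ is the index of $f_{g_0}(C)$ in $\{\xi_t\}_{t \in \omega}$; formalising this inside $\PA$ yields $\PR_{g_0}(\gn{f_{g_0}(C)})$.

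The main obstacle is the modal case with $i \nVdash_n \Box C$. Working in $\PA$ under $S(\num{i})$, I suppose for contradiction that $g_0(y_0) = f_{g_0}(C)$ for the least such $y_0$. If $y_0 \geq m_0$, write $y_0 = m_0 + t$; by the definition of Procedure 2, $\xi_t = f_{g_0}(B')$ for some $\Box B' \in \Sub(A_n)$ with $i \Vdash_n \Box B'$, and by injectivity of $f_{g_0}$ this forces $B' = C$, contradicting $i \nVdash_n \Box C$. So $y_0 < m_0$, meaning $y_0$ is an actual $T$-proof of $f_{g_0}(C)$, placing $f_{g_0}(C) \in P_{T, m_0 - 1}$.

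To extract a contradiction from this sub-case, fix $j \in W_n$ with $i \prec_{n, C} j$ and $j \nVdash_n C$, which exists because $i \nVdash_n \Box C$. By the induction hypothesis applied to $C$ and $j$, $\PA \vdash f_{g_0}(C) \to \neg S(\num{j})$; let $k_0$ be a standard G\"odel number of a $T$-proof of this implication. Proposition \ref{Prop:h}.4 forces $m_0 > k_0$ under $S(\num{i})$, so $f_{g_0}(C) \to \neg S(\num{j}) \in P_{T, m_0 - 1}$ as well. Tautologically combining the two members of $P_{T, m_0 - 1}$ makes $\neg S(\num{j})$ a tautological consequence of $P_{T, m_0 - 1}$; since $j \neq 0$, this forces the minimum in the defining clause of $h(m_0)$ to exist, so $h(m_0) \neq 0$, contradicting $h(m_0) = 0$. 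Hence no such $y_0$ exists, and we conclude $\PA \vdash S(\num{i}) \to \neg \PR_{g_0}(\gn{f_{g_0}(C)})$.
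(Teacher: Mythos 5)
Your proposal is correct and follows essentially the same route as the paper: the same induction, the same direct computation of $g_0(m+t)$ in Procedure~2 for the positive modal case, and for the negative modal case the same two-way split (output at a stage $\geq m$ contradicts $i \nVdash_n \Box C$ via injectivity of $f_{g_0}$; output at a stage $< m$ puts $f_{g_0}(C)$ into $P_{T,m-1}$, which together with the $T$-proof of $S(\num{j}) \to \neg f_{g_0}(C)$ and Proposition~\ref{Prop:h}.4 makes $\neg S(\num{j})$ a tautological consequence of $P_{T,m-1}$, contradicting $h(m)=0$). The only differences are cosmetic: you spell out the atomic case and organize the case split by the least output stage $y_0$ rather than by "Procedure 1 versus Procedure 2."
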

\begin{proof}
Clauses 1 and 2 are proved simultaneously by induction on the construction of $B \in \Sub(A_n)$. 
Firstly, we prove the base step of the induction. 
The case that $B$ is $\bot$ is trivial. 
We prove the case that $B$ is a propositional variable $p$. 

1. Suppose $i \Vdash_n p$. 
We have $\PA \vdash S(\num{i}) \to \exists x(S(x) \land x \neq 0 \land x \Vdash p)$, and hence $\PA \vdash S(\num{i}) \to f_{g_0}(p)$. 

2. Suppose $i \nVdash_n p$. 
Since $\PA \vdash S(\num{i}) \to \forall x(S(x) \land x \neq 0 \to x = \num{i})$ by Proposition \ref{Prop:h}.1, we have that $\PA \vdash S(\num{i}) \to \forall x(S(x) \land x \neq 0 \to x \nVdash p)$. 
Equivalently, $\PA \vdash S(\num{i}) \to \neg f_{g_0}(p)$. 

Secondly, we prove the induction step. 
The cases of $\neg$, $\lor$, $\land$, and $\to$ are easy. 
So, we give only a proof of the case that $B$ is of the form $\Box C$, where the claim holds for $C$. 

1. Suppose $i \Vdash_n \Box C$. 
We reason in $\PA + S(\num{i})$: 
Let $m$ be such that $h(m) = 0$ and $h(m+1) = i$. 
Let $t$ be the number such that $\xi_t \equiv f_{g_0}(C)$. 
Since $i \Vdash_n \Box C$ and $\Box C \in \Sub(A_n)$, we have $g_0(m+t) = f_{g_0}(C)$. 
Thus, $\PR_{g_0}(\gn{f_{g_0}(C)})$ holds. 
This means that $f_{g_0}(\Box C)$ holds. 

2. Suppose $i \nVdash_n \Box C$. 
Then, there exists a $j \in W_n$ such that $i \prec_{n, C} j$ and $j \nVdash_n C$. 
By the induction hypothesis, $\PA \vdash S(\num{j}) \to \neg f_{g_0}(C)$. 
Let $p$ be a $T$-proof of $S(\num{j}) \to \neg f_{g_0}(C)$. 

We argue in $\PA + S(\num{i})$: 
Let $m$ be such that $h(m) = 0$ and $h(m+1) = i$. 

If $f_{g_0}(C)$ is output in Procedure 1, then there exists a $T$-proof $q < m$ of $f_{g_0}(C)$. 
It follows that $f_{g_0}(C) \in P_{T, m-1}$. 
Since $m > p$ by Proposition \ref{Prop:h}.4, we have that $S(\num{j}) \to \neg f_{g_0}(C)$ is in $P_{T, m-1}$. 
Hence, $\neg S(\num{j})$ is a t.c.~of $P_{T, m-1}$. 
This contradicts $h(m) = 0$. 

If $f_{g_0}(C)$ is output in Procedure 2, then $f_{g_0}(C) \equiv f_{g_0}(D)$ and $i \Vdash_n \Box D$ for some $\Box D \in \Sub(A_n)$. 
Since $f_{g_0}$ is injective, we have $C \equiv D$. 
So $i \Vdash_n \Box C$, this is a contradiction. 

We have proved that $f_{g_0}(C)$ is not output by $g_0$. 
Thus, $\neg \PR_{g_0}(\gn{f_{g_0}(C)})$ holds, and hence $\neg f_{g_0}(\Box C)$ holds. 
\end{proof}

We finish our proof of Theorem \ref{Thm:N}. 
The implication $\Rightarrow$ is obvious. 
We prove the implication $\Leftarrow$. 
Suppose that $\N \nvdash A$. 
We have that $A \equiv A_n$ for some $n \in \omega$ and $i \nVdash_n A$ for some $i \in W_n$. 
By Claim \ref{Cl:g_0}, $\PA \vdash S(\num{i}) \to \neg f_{g_0}(A)$. 
Since $T \nvdash \neg S(\num{i})$ by Proposition \ref{Prop:h}.3, we obtain $T \nvdash f_{g_0}(A)$. 
\end{proof}

\begin{cor}
\begin{align*}
	\N & = \bigcap \{\PL(\PR_T) \mid \PR_T(x)\ \text{is a provability predicate of}\ T\}, \\
	& = \bigcap \{\PL(\PR_T) \mid \PR_T(x)\ \text{is a}\ \Sigma_1\ \text{provability predicate of}\ T\}. 
\end{align*}
Moreover, there exists a $\Sigma_1$ provability predicate $\PR_T(x)$ of $T$ such that $\N = \PL(\PR_T)$. 
\end{cor}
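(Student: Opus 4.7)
The plan is that this corollary is essentially a repackaging of Theorem \ref{Thm:N}, with no new ideas required. I would split it into the two inclusions, plus the ``moreover'' clause, and observe that they all fall out of what has already been established.

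For the lower bound $\N \subseteq \bigcap \PL(\PR_T)$, I would recall the observation already made in Section \ref{Sec:Pre}: for any provability predicate $\PR_T(x)$ of $T$, the set $\PL(\PR_T)$ contains every propositional tautology and is closed under \textsc{MP} (since $T$ is closed under modus ponens) and under \textsc{Nec} (since $T \vdash \varphi$ implies $\PA \vdash \PR_T(\gn{\varphi})$, hence $T \vdash \PR_T(\gn{\varphi})$). Therefore $\N \subseteq \PL(\PR_T)$ for each such predicate, and so $\N$ is contained in both intersections displayed in the corollary.

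For the upper bound and the ``moreover'' clause simultaneously, I would invoke the $\Sigma_1$ provability predicate $\PR_T(x)$ and arithmetical interpretation $f$ furnished by clause 2 of Theorem \ref{Thm:N}, which satisfies $\N \vdash A \iff T \vdash f(A)$. Contrapositively, if $A \notin \N$ then $T \nvdash f(A)$, and since $\PL(\PR_T)$ demands $T \vdash f(A)$ for \emph{every} arithmetical interpretation (in particular this specific $f$), we obtain $A \notin \PL(\PR_T)$. Combined with the reverse inclusion from the previous paragraph, this gives $\PL(\PR_T) = \N$, which is precisely the ``moreover'' statement.

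It remains only to note that this single $\Sigma_1$ provability predicate simultaneously witnesses both upper-bound claims. The intersection over all $\Sigma_1$ provability predicates is at most $\PL(\PR_T) = \N$ since our witness is a member of that family, and the intersection over all provability predicates is no larger than the intersection over the $\Sigma_1$ subfamily. I do not expect any obstacle here; the entire content of the corollary has been packed into Theorem \ref{Thm:N}, and what remains is bookkeeping about intersections of families.
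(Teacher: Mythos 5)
Your proposal is correct and matches the paper's (implicit) derivation exactly: the paper leaves this corollary as an immediate consequence of Theorem \ref{Thm:N}, using the remark at the start of Section \ref{Sec:N} that $\N \subseteq \PL(\PR_T)$ for every provability predicate together with clause 2 of the theorem for the upper bound and the ``moreover'' clause. Nothing is missing.
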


\section{Arithmetical completeness of $\NF$}\label{Sec:NF}

In this section, we investigate provability predicates satisfying the condition $\D{3}$. 
It is easy to show that for any provability predicate $\PR_T(x)$ satisfying $\D{3}$, $\NF \subseteq \PL(\PR_T)$. 
We prove that $\NF$ is exactly the provability logic of all provability predicates satisfying $\D{3}$. 
Moreover, we prove the following uniform version of arithmetical completeness.

\begin{thm}[The uniform arithmetical completeness of $\NF$]\label{Thm:NF}
There exists a $\Sigma_1$ provability predicate $\PR_T(x)$ of $T$ such that
\begin{enumerate}
	\item for any $A \in \MF$ and any arithmetical interpretation $f$ based on $\PR_T(x)$, if $\NF \vdash A$, then $\PA \vdash f(A)$; and
	\item there exists an arithmetical interpretation $f$ based on $\PR_T(x)$ such that for any $A \in \MF$, $\NF \vdash A$ if and only if $T \vdash f(A)$.
\end{enumerate}
\end{thm}

\begin{proof}
Let $\langle A_n \rangle_{n \in \omega}$ be a primitive recursive enumeration of all $\NF$-unprovable $\LB$-formulas. 
For each $n \in \omega$, let $(W_n, \{\prec_{n, B}\}_{B \in \MF}, \Vdash_n)$ be a primitive recursively constructed finite $\Sub(A_n)$-transitive $\N$-model falsifying $A_n$. 
Let $\mathcal{M}$ be the primitive recursively representable $\N$-model defined as the disjoint union of these finite $\N$-models as in the proof of Theorem \ref{Thm:N}. 

We define the primitive recursive function $g_1$ corresponding to this theorem. 
By the formalized recursion theorem, we use $\PR_{g_1}$ and $f_{g_1}$ in the definition of $g_1$ where $\PR_{g_1}(x)$ is the formula $\exists y(g_1(y) = x \land \Fml(x))$ and $f_{g_1}$ is the arithmetical interpretation based on $\PR_{g_1}(x)$ defined by $f_{g_1}(p) \equiv \exists x(S(x) \land x \neq 0 \land x \Vdash p)$. 
As in the definition of the function $g_0$, the definition of $g_1$ consists of Procedures 1 and 2. 
Moreover, the definition of Procedure 1 is completely same as that of $g_0$, so here we only give the definition of Procedure 2. 
Unlike the function $g_0$, to ensure that $\PR_{g_1}(x)$ satisfies $\D{3}$, in Procedure 2, the function $g_1$ outputs the sentences of the form $\PR_{g_1}(\gn{\varphi})$ for already output formulas $\varphi$.

\vspace{0.1in}

\textsc{Procedure 2}\\
Let $m$, $i \neq 0$, and $n$ be such that $h(m) = 0$, $h(m+1) = i$, and $i \in W_n$. 
Define 
\[
	g_1(m+t) = \begin{cases} \xi_t & \text{if}\ \xi_t \equiv f_{g_1}(B)\ \&\ i \Vdash_n \Box B\ \text{for some}\ \Box B \in \Sub(A_n) \\
											& \quad\ \text{or}\ \xi_t \equiv \PR_{g_1}(\gn{\varphi})\ \&\ g_1(l) = \varphi\ \text{for some}\ \varphi\ \text{and}\ l < m+t, \\
	0 & \text{otherwise.}
	\end{cases}
\]

Since Procedure 1 in the definition of $g_1$ is same as that of $g_0$, the following claim is proved as in the proof of Theorem \ref{Thm:N}. 

\begin{cl}\label{Cl:g_1_eq}
$\PA + \Con_T \vdash \forall x \forall y \Bigl(\Fml(x) \to \bigl(\Proof_T(x, y) \leftrightarrow x = g_1(y) \bigr) \Bigr)$. 
\end{cl}

Hence, $\PR_{g_1}(x)$ is a $\Sigma_1$ provability predicate of $T$. 
We prove that $\PR_{g_1}(x)$ satisfies the condition $\D{3}$. 

\begin{cl}\label{NFCL}
For any $\LA$-formula $\varphi$, $\PA \vdash \PR_{g_1}(\gn{\varphi}) \to \PR_{g_1}(\gn{\PR_{g_1}(\gn{\varphi})})$. 
\end{cl}
\begin{proof}
Since $\PR_{g_1}(\gn{\varphi})$ is a $\Sigma_1$ sentence, $\PA \vdash \PR_{g_1}(\gn{\varphi}) \to \Prov_T(\gn{\PR_{g_1}(\gn{\varphi})})$. 
By Claim \ref{Cl:g_1_eq}, $\PA + \Con_T \vdash \forall x \Bigl(\Fml(x) \to \bigl(\Prov_T(x) \leftrightarrow \PR_{g_1}(x) \bigr) \Bigr)$. 
Thus, we have $\PA + \Con_T \vdash \PR_{g_1}(\gn{\varphi}) \to \PR_{g_1}(\gn{\PR_{g_1}(\gn{\varphi})})$. 

We reason in $\PA + \neg \Con_T + \PR_{g_1}(\gn{\varphi})$: 
By Proposition \ref{Prop:h}.2, there exists an $i \neq 0$ such that $S(i)$ holds. 
Let $m$ and $n$ be such that $h(m) = 0$, $h(m+1) = i$, and $i \in W_n$. 
Since $\PR_{g_1}(\gn{\varphi})$ holds, $\varphi$ is output by $g_1$. 
Let $s$ be such that $\xi_s \equiv \varphi$. 

If $\varphi$ is output in Procedure 1, then $\varphi = g_1(k)$ for some $k < m$.  
If $\varphi$ is output in Procedure 2, then $\varphi = g_1(m + s)$. 
In either case, we have that $\varphi \in \{g_1(0), \ldots, g_1(m+s)\}$. 
Let $u$ be such that $\xi_u \equiv \PR_{g_1}(\gn{\varphi})$. 
Since the G\"odel number of $\PR_{g_1}(\gn{\varphi})$ is larger than that of $\varphi$, we have $s < u$ by the choice of the enumeration $\langle \xi_t \rangle_{t \in \omega}$. 
Since there is an $l < m + u$ such that $g_1(l) = \varphi$, we have that $g_1(m + u) = \PR_{g_1}(\gn{\varphi})$. 
Thus, $\PR_{g_1}(\gn{\PR_{g_1}(\gn{\varphi})})$ holds. 

We have proved $\PA + \neg \Con_T \vdash \PR_{g_1}(\gn{\varphi}) \to \PR_{g_1}(\gn{\PR_{g_1}(\gn{\varphi})})$. 
By the law of excluded middle, we conclude $\PA \vdash \PR_{g_1}(\gn{\varphi}) \to \PR_{g_1}(\gn{\PR_{g_1}(\gn{\varphi})})$. 
\end{proof}

\begin{cl}\label{Cl:g_1}
Let $i \in W_n$ and $B \in \Sub(A_n)$. 
\begin{enumerate}
	\item If $i \Vdash_n B$, then $\PA \vdash S(\num{i}) \to f_{g_1}(B)$. 
	\item If $i \nVdash_n B$, then $\PA \vdash S(\num{i}) \to \neg f_{g_1}(B)$. 
\end{enumerate}
\end{cl}
\begin{proof}
This is proved by induction on the construction of $B \in \Sub(A_n)$. 
We only prove the case that $B$ is of the form $\Box C$. 
Clause 1 is proved in the similar way as in the proof of Claim \ref{Cl:g_0}. 

2. Suppose $i \nVdash_n \Box C$. 
We prove in $\PA + S(\num{i})$ that $f_{g_1}(C)$ is not output by $g_1$. 
We distinguish the following two cases: 

\begin{itemize}
	\item Case 1: $C$ is not of the form $\Box D$. \\
	There exists a $j \in W_n$ such that $i \prec_{n, C} j$ and $j \nVdash_n C$. 
	By the induction hypothesis, $\PA \vdash S(\num{j}) \to \neg f_{g_1}(C)$. 
	Let $p$ be a $T$-proof of $S(\num{j}) \to \neg f_{g_1}(C)$. 

	We proceed in $\PA + S(\num{i})$: 
	Let $m$ be such that $h(m) = 0$ and $h(m+1) = i$. 
	By Proposition \ref{Prop:h}.4, we have $m > p$, and hence $S(\num{j}) \to \neg f_{g_1}(C)$ is in $P_{T, m-1}$. 

	If $f_{g_1}(C)$ is output in Procedure 1, then $f_{g_1}(C) \in P_{T, m-1}$, and hence $\neg S(\num{j})$ is a t.c.~of $P_{T, m-1}$. 
	This contradicts $h(m) = 0$. 

	If $f_{g_1}(C)$ is output in Procedure 2, then $\xi_t \equiv f_{g_1}(C)$ and $g_1(m+t) = f_{g_1}(C)$ for some $t$. 
	Since $C$ is not of the form $\Box D$, there is no $\varphi$ such that $f_{g_1}(C) \equiv \PR_{g_1}(\gn{\varphi})$. 
	By the definition of $g_1$, $f_{g_1}(C) \equiv f_{g_1}(D)$ and $i \Vdash_n \Box D$ for some $\Box D \in \Sub(A_n)$. 
	It follows that $C \equiv D$ and this contradicts $i \nVdash_n \Box C$.  

	\item Case 2: $C$ is of the form $\Box D$. \\
	Then, $\Box \Box D \in \Sub(A_n)$. 
	Since $(W_n, \{\prec_{n, B}\}_{B \in \MF}, \Vdash_n)$ is $\Sub(A_n)$-transitive, $\Box D \to \Box \Box D$ is valid in the model. 
	Since $i \nVdash_n \Box \Box D$, we have $i \nVdash_n \Box D$. 
	By the induction hypothesis, 
	\begin{equation}\label{eq1}
		\PA \vdash S(\num{i}) \to \neg f_{g_1}(\Box D). 
	\end{equation}

	We reason in $\PA + S(\num{i})$: 
	If $f_{g_1}(\Box D)$ is output in Procedure 1, then $f_{g_1}(\Box D) \in P_{T, m-1}$. 
	By (\ref{eq1}) and Proposition \ref{Prop:h}.4, $S(\num{i}) \to \neg f_{g_1}(\Box D)$ is also in $P_{T, m-1}$. 
	Hence, $\neg S(\num{i})$ is a t.c.~of $P_{T, m-1}$, a contradiction. 

	If $f_{g_1}(\Box D)$ is output in Procedure 2, then $\xi_t \equiv f_{g_1}(\Box D)$ and $g_1(m+t) = f_{g_1}(\Box D)$ for some $t$. 
	If $f_{g_1}(\Box D) \equiv f_{g_1}(E)$ and $i \Vdash_n \Box E$ for some $\Box E \in \Sub(A_n)$, then $\Box D \equiv E$ and $i \Vdash_n \Box \Box D$, a contradiction. 
	Thus, we have that $f_{g_1}(\Box D) \equiv \PR_{g_1}(\gn{\varphi})$ and $g_1(l) = \varphi$ for some $\varphi$ and $l < m + t$. 
	Since $g_1(l) = \varphi$, we have that $\PR_{g_1}(\gn{\varphi})$ holds, and hence $f_{g_1}(\Box D)$ holds. 
	This contradicts (\ref{eq1}). 

\end{itemize}
In either case, we have shown in $\PA + S(\num{i})$ that $f_{g_1}(C)$ is not output by $g_1$. 
Therefore, $\neg f_{g_1}(\Box C)$ holds in $\PA + S(\num{i})$. 
\end{proof}

The first clause of the theorem follows from Claims \ref{Cl:g_1_eq} and \ref{NFCL}. 
The second clause follows from Proposition \ref{Prop:h}.3 and Claim \ref{Cl:g_1}. 
\end{proof}

\begin{cor}
\begin{align*}
	\NF & = \bigcap \{\PL(\PR_T) \mid \PR_T(x)\ \text{is a provability predicate of}\ T\ \text{satisfying}\ \D{3}\}, \\
	& = \bigcap \{\PL(\PR_T) \mid \PR_T(x)\ \text{is a}\ \Sigma_1\ \text{provability predicate of}\ T\ \text{satisfying}\ \D{3}\}. 
\end{align*}
Moreover, there exists a $\Sigma_1$ provability predicate $\PR_T(x)$ of $T$ such that $\NF = \PL(\PR_T)$. 
\end{cor}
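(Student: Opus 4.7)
The plan is to derive the corollary directly from Theorem \ref{Thm:NF} together with an easy soundness observation, sandwiching the two intersections between $\NF$ on one side and a single witness logic on the other.

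First I would establish the inclusion $\NF \subseteq \PL(\PR_T)$ for every provability predicate $\PR_T(x)$ of $T$ satisfying $\D{3}$. This is the soundness half and it is essentially already noted in Section \ref{Sec:Pre}: $\PL(\PR_T)$ is closed under Modus Ponens and Necessitation for any provability predicate, and the condition $\D{3}$ says exactly that for every $\LA$-formula $\varphi$, $T \vdash \PR_T(\gn{\varphi}) \to \PR_T(\gn{\PR_T(\gn{\varphi})})$, so under any arithmetical interpretation $f$ based on $\PR_T(x)$, $T \vdash f(\Box A \to \Box\Box A)$. Hence the axiom scheme $\Box A \to \Box\Box A$ and both inference rules of $\NF$ are preserved, giving $\NF \subseteq \PL(\PR_T)$. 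Taking intersections yields $\NF$ as a lower bound for both intersections displayed in the corollary.

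Next I would use Theorem \ref{Thm:NF} to obtain the matching upper bound. That theorem produces a $\Sigma_1$ provability predicate $\PR_T(x)$, call it $\PR_{g_1}(x)$, together with an arithmetical interpretation $f$ such that $\NF \vdash A$ if and only if $T \vdash f(A)$. By Claim \ref{NFCL} this $\PR_{g_1}(x)$ satisfies $\D{3}$, so it is a legitimate member of both families over which the corollary takes intersections. From the equivalence $\NF \vdash A \iff T \vdash f(A)$ I obtain $\PL(\PR_{g_1}) \subseteq \NF$: if $A \in \PL(\PR_{g_1})$ then in particular $T \vdash f(A)$, hence $\NF \vdash A$. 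Combined with the soundness inclusion of the previous paragraph, $\PL(\PR_{g_1}) = \NF$, which proves the ``Moreover'' clause.

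Finally I would chain the inclusions. Since the collection of $\Sigma_1$ provability predicates satisfying $\D{3}$ is contained in the collection of all provability predicates satisfying $\D{3}$, the corresponding intersection of provability logics is a superset, and $\PR_{g_1}(x)$ witnesses both. Thus
\[
\NF \;\subseteq\; \bigcap\{\PL(\PR_T) \mid \PR_T\ \text{satisfies}\ \D{3}\} \;\subseteq\; \bigcap\{\PL(\PR_T) \mid \PR_T\ \text{is}\ \Sigma_1\ \text{and satisfies}\ \D{3}\} \;\subseteq\; \PL(\PR_{g_1}) = \NF,
\]
so all four are equal. No step here is a genuine obstacle: the real work is already absorbed into the uniform arithmetical completeness theorem, so the corollary is essentially bookkeeping around the existence of the witness $\PR_{g_1}(x)$.
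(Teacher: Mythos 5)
Your proposal is correct and follows essentially the same route as the paper: the soundness inclusion $\NF \subseteq \PL(\PR_T)$ for predicates satisfying $\D{3}$ is exactly the remark the paper makes at the start of Section \ref{Sec:NF}, and the upper bound comes from clause 2 of Theorem \ref{Thm:NF} together with Claim \ref{NFCL}, which certifies that $\PR_{g_1}(x)$ satisfies $\D{3}$ and hence lies in both families being intersected. The chain of inclusions you write down is precisely the bookkeeping the paper leaves implicit.
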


\section{Arithmetical completeness of $\NR$}\label{Sec:NR}

It is known that for any Rosser provability predicate $\PR_T^{\mathrm{R}}(x)$ of $T$ and any $\LA$-formula $\varphi$, if $T \vdash \neg \varphi$, then $\PA \vdash \neg \PR_T^{\mathrm{R}}(\gn{\varphi})$. 
This fact corresponds to the closure under the rule \textsc{Ros} $\dfrac{\neg A}{\neg \Box A}$, and hence it is shown that $\NR \subseteq \PL(\PR_T^{\mathrm{R}})$. 
Moreover, we obtain the following theorem: 

\begin{thm}[The arithmetical soundness of $\NR$]\label{Thm:AS_NR}
For any $A \in \MF$, any Rosser provability predicate $\PR_T^{\mathrm{R}}(x)$ of $T$, and any arithmetical interpretation $f$ based on $\PR_T^{\mathrm{R}}(x)$, if $\NR \vdash A$, then $\PA \vdash f(A)$.
\end{thm}

Our logic $\NR$ is a candidate for the axiomatization of the logic $L^{\mathrm{R}}$ introduced in Section \ref{Sec:Pre}. 
In this section, we prove that this is the case. 
Namely, we prove that $\NR$ is exactly the provability logic of all Rosser provability predicates. 

Firstly, we prove the coincidence of $\NR$ and $L^{\mathrm{R}}$ without going through arithmetic. 
For each $\mathcal{L}(\Box)$-formula $A$, let $A^{\blacksquare}$ be the $\mathcal{L}(\blacksquare)$-formula obtained from $A$ by replacing every $\Box$ in $A$ with $\blacksquare$. 

\begin{thm}\label{Coincidence}
For any $\mathcal{L}(\Box)$-formula $A$, the following are equivalent: 
\begin{enumerate}
	\item $\NR \vdash A$. 
	\item $\mathsf{GR} \vdash A^\blacksquare$. 
\end{enumerate}
\end{thm}
\begin{proof}
Since the rules $\dfrac{A}{\blacksquare A}$ and $\dfrac{\neg A}{\neg \blacksquare A}$ are admissible in $\GR$, the implication ($1 \Rightarrow 2$) is straightforward. 

$(2 \Rightarrow 1)$: We prove the contrapositive. 
Suppose $\NR \nvdash A$. 
By Theorem \ref{Thm:complNR}, there exists a serial $\N$-model $(W, \{\prec_B\}_{B \in \MF}, \Vdash)$ and an element $w \in W$ such that $w \nVdash A$. 
Let $r$ be any object not in $W$ and define $W^* : = W \cup \{r\}$. 
We define binary relations $\prec_B^*$ on $W^*$ for every $\mathcal{L}(\Box, \blacksquare)$-formula $B$ as follows: 
\[
	\prec_B^* : = \begin{cases} \prec_C & B\ \text{is of the form}\ C^\blacksquare\ \text{for some}\ \mathcal{L}(\Box)\text{-formula}\ C, \\ W^2 & \text{otherwise.} \end{cases} 
\]
We define the satisfaction relation $\Vdash^*$ between the elements of $W^*$ and $\mathcal{L}(\Box, \blacksquare)$-formulas as follows: 
Let $x \in W$, $p$ be any propositional variable, and $B$ be any $\mathcal{L}(\Box, \blacksquare)$-formula, 
\begin{itemize}
	\item $x \Vdash^* p \iff x \Vdash p$; 
	\item $r \Vdash^* p$; 
	\item $x \Vdash^* \Box B$; 
	\item $r \Vdash^* \Box B \iff y \Vdash^* B$ for all $y \in W$; 
	\item $x \Vdash^* \blacksquare B \iff y \Vdash^* B$ for all $y \in W$ such that $x \prec_B^* y$; 
	\item $r \Vdash^* \blacksquare B \iff y \Vdash^* B$ for all $y \in W$; 
	\item $\Vdash^*$ fulfills the usual conditions for $\bot, \neg, \land$, and $\lor$. 
\end{itemize}
By the definition of $\Vdash^*$, it is easy to see that all axioms of $\GR^-$ are true in all $x \in W$. 
Also, it can be shown that all axioms of $\GR^-$ are true in $r$. 
We give only a proof of the fact that $\Box \neg B \to \Box \neg \blacksquare B$ is true in $r$. 

Suppose $r \Vdash^* \Box \neg B$. 
Let $x$ be any element of $W$. 
If $B$ is of the form $C^\blacksquare$ for some $C \in \MF$, then $\prec_B^* = \prec_C$. 
Since $\prec_C$ is serial, we find a $y \in W$ such that $x \prec_C y$. 
Thus, $x \prec_B^* y$. 
Otherwise, we have $\prec_B^* = W^2$, and hence $x \prec_B^* x$. 
In either case, we obtain a $y \in W$ such that $x \prec_B^* y$. 
By the supposition, we have $y \Vdash^* \neg B$, and hence $x \Vdash^* \neg \blacksquare B$. 
We conclude $r \Vdash^* \Box \neg \blacksquare B$. 

It is easy to show that the rules \textsc{MP} and \textsc{Nec} for $\Box$ preserve validity in the model $(W^*, \{\prec_B^*\}, \Vdash^*)$. 
So, we obtain that all theorems of $\GR^-$ are valid in the model. 

By induction on the construction of $\mathcal{L}(\Box)$-formula $B$, we can prove that for any $x \in W$, $x \Vdash B$ if and only if $x \Vdash^* B^\blacksquare$. 
Since $w \nVdash A$, we get $w \nVdash^* A^\blacksquare$. 
Thus, $r \nVdash^* \Box (A^\blacksquare)$. 
Since every theorem of $\GR^-$ is true in all elements of $W^*$, we obtain $\GR^- \nvdash \Box (A^\blacksquare)$. 
By Theorem \ref{GRm}, we conclude that $\GR \nvdash A^\blacksquare$. 
\end{proof}

Therefore, $\NR$ is exactly the $\Box$-free fragment $L^{\mathrm{R}}$ of $\mathsf{GR}$. 
By Corollary \ref{Cor:LR} and Theorem \ref{Coincidence}, we obtain that the provability logic of all Rosser provability predicates of $T$ coincides with $\NR$. 

\begin{cor}\label{Cor:PLRos1}
If $T$ is $\Sigma_1$-sound, then 
\begin{align*}
	\NR & = \bigcap \{\PL(\PR_T^{\mathrm{R}}) \mid \PR_T^{\mathrm{R}}(x)\ \text{is a Rosser provability predicate of}\ T\}.  
\end{align*}
Moreover, there exists a Rosser provability predicate $\PR_T^{\mathrm{R}}(x)$ of $T$ such that $\NR = \PL(\PR_T^{\mathrm{R}})$. 
\end{cor}

Secondly, we directly prove the arithmetical completeness theorem for $\NR$ without using the arithmetical completeness theorem for $\GR$, to show that Corollary \ref{Cor:PLRos1} holds without assuming the $\Sigma_1$-soundness of $T$.

\begin{thm}[The uniform arithmetical completeness of $\NR$]\label{Thm:NR}
There exist a Rosser provability predicate $\PR_T^{\mathrm{R}}(x)$ of $T$ and an arithmetical interpretation $f$ based on $\PR_T^{\mathrm{R}}(x)$ such that for any $A \in \MF$, $\NR \vdash A$ if and only if $T \vdash f(A)$.
\end{thm}
\begin{proof}
Let $\langle A_n \rangle_{n \in \omega}$ be a primitive recursive enumeration of all $\NR$-unprovable $\LB$-formulas. 
For each $n \in \omega$, let $(W_n, \{\prec_{n, B}\}_{B \in \MF}, \Vdash_n)$ be a primitive recursively constructed finite $\Sub(A)$-serial $\N$-model in which $A_n$ is not valid. 
Let $\mathcal{M} = (W, \{\prec_B\}_{B \in \MF}, \Vdash)$ be the $\N$-model defined as in the previous sections. 
We define the corresponding primitive recursive function $g_2$ enumerating all theorems of $T$. 
Let $\PR_{g_2}^{\mathrm{R}}(x)$ be the formula 
\[
	\exists y \bigl(\Fml(x) \land x = g_2(y) \land \forall z < y\, \dot{\neg}(x) \neq g_2(z) \bigr).
\] 
In the definition of $g_2$, we use the arithmetical interpretation $f_{g_2}$ based on $\PR_{g_2}^{\mathrm{R}}(x)$ defined as $f_{g_2}(p) \equiv \exists x (S(x) \land x \neq 0 \land x \Vdash p)$. 
Procedure 1 in the construction of $g_2$ is same as that of $g_0$, and so we only give the definition of Procedure 2. 
Roughly speaking, $g_2$ is defined so that if $S(\num{i})$ holds for $i \in W_n$, then in Procedure 2, for $\Box B \in \Sub(A_n)$ which is false in $i$, $g_2$ outputs $\neg f_{g_2}(B)$ before any output of $f_{g_2}(B)$. 

\vspace{0.1in}

\textsc{Procedure 2}\\
Let $m$, $i \neq 0$, and $n$ be such that $h(m) = 0$, $h(m+1) = i$, and $i \in W_n$. 
We define the finite set $X$ of $\LA$-sentences as follows:
\[
	X : = \{\neg f_{g_2}(B) \mid i \nVdash_n \Box B\ \&\ \Box B \in \Sub(A_n)\}.
\]
Let $\chi_0, \ldots, \chi_{k-1}$ be the listing of all elements of $X$ arranged in descending order of G\"odel numbers. 
For $l < k$, define
\[
	g_2(m + l) = \chi_l. 
\]
And define 
\[
	g_2(m + k + t) = \xi_t. 
\]
The definition of $g_2$ is finished.

\begin{cl}\label{Cl:g_2_eq}\leavevmode
\begin{enumerate}
	\item $\PA + \Con_T \vdash \forall x \forall y \Bigl(\Fml(x) \to \bigl(\Proof_T(x, y) \leftrightarrow x = g_2(y) \bigr) \Bigr)$. 
	\item $\PA \vdash \forall x \Bigl(\Fml(x) \to \bigl(\Prov_T(x) \leftrightarrow \PR_{g_2}(x) \bigr) \Bigr)$. 
\end{enumerate}
\end{cl}
\begin{proof}
Clause 1 is proved similarly as in the proof of Theorem \ref{Thm:N}. 

2. By Clause 1, $\PA + \Con_T \vdash \forall x \Bigl(\Fml(x) \to \bigl(\Prov_T(x) \leftrightarrow \PR_{g_2}(x) \bigr) \Bigr)$. 
Also, $\PA + \neg \Con_T \vdash \forall x \bigl(\Fml(x) \to \Prov_T(x) \bigr)$. 
Proposition \ref{Prop:h}.2 says that $\PA$ verifies that if $T$ is inconsistent, then the construction of $g_2$ eventually switches to Procedure 2. 
Since $g_2$ outputs all $\LA$-formulas in Procedure 2, we have $\PA + \neg \Con_T \vdash \forall x \bigl(\Fml(x) \to \PR_{g_2}(x) \bigr)$. 
Hence, $\PA + \neg \Con_T \vdash \forall x \Bigl(\Fml(x) \to \bigl(\Prov_T(x) \leftrightarrow \PR_{g_2}(x) \bigr) \Bigr)$. 
By the law of excluded middle, we conclude $\PA \vdash \forall x \Bigl(\Fml(x) \to \bigl(\Prov_T(x) \leftrightarrow \PR_{g_2}(x) \bigr) \Bigr)$. 
\end{proof}

It follows from this claim, $\PR_{g_2}^{\mathrm{R}}(x)$ is a Rosser provability predicate of $T$. 

\begin{cl}\label{Cl:g_2}
Let $i \in W_n$ and $B \in \Sub(A_n)$. 
\begin{enumerate}
	\item If $i \Vdash_n B$, then $\PA \vdash S(\num{i}) \to f_{g_2}(B)$. 
	\item If $i \nVdash_n B$, then $\PA \vdash S(\num{i}) \to \neg f_{g_2}(B)$. 
\end{enumerate}
\end{cl}
\begin{proof}
This claim is proved by induction on the construction of $B \in \Sub(A_n)$. 
We only give a proof of the case $B \equiv \Box C$. 

1. Suppose that $i \Vdash_n \Box C$. 
Since $\Box C \in \Sub(A_n)$ and $(W_n, \{\prec_{n, B}\}_{B \in \MF}, \Vdash_n)$ is $\Sub(A_n)$-serial, there is a $j \in W_n$ such that $i \prec_{n, C} j$. 
Then, $j \Vdash_n C$. 
By the induction hypothesis, $\PA \vdash S(\num{j}) \to f_{g_2}(C)$. 
Let $p$ be a $T$-proof of $S(\num{j}) \to f_{g_2}(C)$. 

We argue in $\PA + S(\num{i})$: 
Let $m$ be such that $h(m) = 0$ and $h(m+1) = i$. 
Also, let $X$ be the finite set of $\LA$-formulas as in Procedure 2 and let $k$ be the cardinality of $X$. 
By Proposition \ref{Prop:h}.4, $m > p$, and hence $S(\num{j}) \to f_{g_2}(C)$ is in $P_{T, m-1}$. 

If $\neg f_{g_2}(C) \in P_{T, m-1}$, then $\neg S(\num{j})$ is a t.c.~of $P_{T, m-1}$, a contradiction. 
Hence, $\neg f_{g_2}(C) \notin P_{T, m-1}$, and so $\neg f_{g_2}(C) \notin \{g_2(0), \ldots, g_2(m-1)\}$. 
If $\neg f_{g_2}(C) \in X$, then there exists a $\Box D \in \Sub(A_n)$ such that $\neg f_{g_2}(C) \equiv \neg f_{g_2}(D)$ and $i \nVdash_n \Box D$. 
Then, we have $C \equiv D$, and this contradicts $i \Vdash_n \Box C$. 
Thus, we have $\neg f_{g_2}(C) \notin X$, that is, $\neg f_{g_2}(C) \notin \{g_2(m), \ldots, g_2(m + k -1)\}$. 
Therefore, we obtain $\neg f_{g_2}(C) \notin \{g_2(0), \ldots, g_2(m + k -1)\}$. 

Let $s$ and $u$ be such that $\xi_s \equiv f_{g_2}(C)$ and $\xi_u \equiv \neg f_{g_2}(C)$. 
Then, $s < u$, $g_2(m + k + s) = f_{g_2}(C)$, and $g_2(m + k + u) = \neg f_{g_2}(C)$. 
In particular, $g_2(m + k + u)$ is the first output of $\neg f_{g_2}(C)$ by $g_2$. 
Therefore, $\PR_{g_2}^{\mathrm{R}}(\gn{f_{g_2}(C)})$ holds. 
That is, $f_{g_2}(\Box C)$ holds. 

2. Suppose $i \nVdash_n \Box C$. 
Then, there exists a $j \in W_n$ such that $i \prec_{n, C} j$ and $j \nVdash_n C$. 
By the induction hypothesis, $\PA \vdash S(\num{j}) \to \neg f_{g_2}(C)$. 

We reason in $\PA + S(\num{i})$: 
Let $m$ be such that $h(m) = 0$ and $h(m+1) = i$. 
Also, let $X$ and $k$ be as in the definition of $g_2$. 
As above, $S(\num{j}) \to \neg f_{g_2}(C)$ is in $P_{T, m-1}$. 
If $f_{g_2}(C) \in P_{T, m-1}$, then $\neg S(\num{j})$ is a t.c.~of $P_{T, m-1}$, and this is a contradiction. 
Thus, $f_{g_2}(C) \notin P_{T, m-1}$, and hence $f_{g_2}(C) \notin \{g_2(0), \ldots, g_2(m-1)\}$. 

On the other hand, since $\Box C \in \Sub(A_n)$ and $i \nVdash_n \Box C$, we obtain $\neg f_{g_2}(C) \in X$. 
That is, $\neg f_{g_2}(C) \in \{g_2(m), \ldots, g_2(m+k-1)\}$. 
Since the G\"odel number of $\neg f_{g_2}(C)$ is larger than that of $f_{g_2}(C)$, even if $f_{g_2}(C) \in X$, $\neg f_{g_2}(C)$ is listed earlier than $f_{g_2}(C)$ in the listing $\chi_0, \ldots, \chi_{k-1}$ of $X$.
Thus, $\neg f_{g_2}(C)$ is output by $g_2$ earlier than any output of $f_{g_2}(C)$. 
Therefore, $\neg \PR_{g_2}^{\mathrm{R}}(\gn{f_{g_2}(C)})$ holds. 
This means that $\neg f_{g_2}(\Box C)$ holds. 
\end{proof}

The theorem follows from Proposition \ref{Prop:h}.3 and Claims \ref{Cl:g_2_eq} and \ref{Cl:g_2}. 
\end{proof}

Finally, we obtain that Corollary \ref{Cor:PLRos1} holds regardless of whether $T$ is $\Sigma_1$-sound or not.

\begin{cor}\label{Cor:PLRos2}
\begin{align*}
	\NR & = \bigcap \{\PL(\PR_T^{\mathrm{R}}) \mid \PR_T^{\mathrm{R}}(x)\ \text{is a Rosser provability predicate of}\ T\}.  
\end{align*}
Moreover, there exists a Rosser provability predicate $\PR_T^{\mathrm{R}}(x)$ of $T$ such that $\NR = \PL(\PR_T^{\mathrm{R}})$. 
\end{cor}

\section{Arithmetical completeness of $\NRF$}\label{Sec:NRF}

Arai \cite{Ara} proved the existence of Rosser provability predicates satisfying the condition $\D{3}$. 
For such Rosser provability predicates $\PR_T^{\mathrm{R}}(x)$, one has $\NRF \subseteq \PL(\PR_T^{\mathrm{R}})$. 
In this section, we investigate the provability logic of Arai's predicates, and prove that $\NRF$ is exactly the provability logic of all Rosser provability predicates satisfying $\D{3}$.

\begin{thm}[The uniform arithmetical completeness of $\NRF$]\label{Thm:NRF}
There exists a Rosser provability predicate $\PR_T^{\mathrm{R}}(x)$ of $T$ such that
\begin{enumerate}
	\item for any $A \in \MF$ and any arithmetical interpretation $f$ based on $\PR_T^{\mathrm{R}}(x)$, if $\NRF \vdash A$, then $\PA \vdash f(A)$; and
	\item there exists an arithmetical interpretation $f$ based on $\PR_T^{\mathrm{R}}(x)$ such that for any $A \in \MF$, $\NRF \vdash A$ if and only if $T \vdash f(A)$.
\end{enumerate}
\end{thm}

\begin{proof}
Let $\langle A_n \rangle_{n \in \omega}$ be a primitive recursive enumeration of all $\NRF$-unprovable $\LB$-formulas. 
For each $n \in \omega$, let $(W_n, \{\prec_{n, B}\}_{B \in \MF}, \Vdash_n)$ be a primitive recursively constructed finite $\Sub(A_n)$-transitive and $\Sub(A_n)$-serial $\N$-model falsifying $A_n$. 
Let $\mathcal{M} = (W, \{\prec_B\}_{B \in \MF}, \Vdash)$ be the primitive recursively representable infinite model constructed as the disjoint union of $\{(W_n, \{\prec_{n, B}\}_{B \in \MF}, \Vdash_n)\}_{n \in \omega}$ as in the proof of Theorem \ref{Thm:N}. 
In particular, $W = \omega \setminus \{0\}$. 

Unlike the proofs in the previous sections, our proof of this theorem uses a different function $h'$ instead of the function $h$. 
By using the double recursion theorem, we simultaneously define the primitive recursive functions $h'$ and $g_3$. 
Firstly, we define the function $h'$. 

\begin{itemize}
	\item $h'(0) = 0$. 
	\item $h'(m+1) = \begin{cases}
				i & \text{if}\ h'(m) = 0\\
					& \&\ i = \min \bigl\{j \in \omega \setminus \{0\} \mid \neg S'(\num{j}) \ \text{is a t.c.~of}\ P_{T, m}\\
					& \quad\ \text{or}\ \exists \varphi < m \ \bigl[\neg \varphi \notin P_{T, m} \cup X_{j, m} \\
					& \quad \quad \quad \quad \&\ S'(\num{j}) \to \neg \PR_{g_3}^{\mathrm{R}}(\gn{\varphi})\ \text{is a t.c.~of}\ P_{T, m}\bigr]\bigr\}\\
			h'(m) & \text{otherwise}.
		\end{cases}$
\end{itemize}
Here, $S'(x)$ is the $\Sigma_1$ formula $\exists y(h'(y) = x)$. 
Also, $\varphi < m$ means that the G\"odel number of $\varphi$ is smaller than $m$. 
Also, for each $j \in W_n$ and number $m$, $X_{j, m}$ is the finite set 
\begin{equation*}
	\left\{ \neg f_{g_3}(D) \left|
	\begin{array}{l}
	\Box D \in \Sub(A_n) \\
		\&\ \exists l \in W_n \ \bigl[j \prec_{n, D} l \ \&\ S'(\num{l}) \to \neg f_{g_3}(D)\ \text{is a t.c.~of}\ P_{T, m} \bigr]
	\end{array}
	\right.\right\}, 
\end{equation*}
where $f_{g_3}$ is the arithmetical interpretation based on $\PR_{g_3}^{\mathrm{R}}(x)$ defined as $f_{g_3}(p) : \equiv \exists x (S'(x) \land x \neq 0 \land x \Vdash p)$. 

We find some $r \in \omega$ such that $A_r \equiv \bot$ since $\NRF \nvdash \bot$, and we fix any $j_0 \in W_r$. 
Here, we prove that if $h'(m) = 0$ and $h'(m+1) = i \neq 0$, then $i \leq \max\{j_0, m\}$. 
It follows that $h'$ is actually a primitive recursive function. 
Suppose $h'(m) = 0$ and $h'(m+1) = i \neq 0$. 
If $P_{T, m}$ is propositionally unsatisfiable, then $\neg S'(\num{1})$ is a t.c.~of $P_{T, m}$, and then $i = 1 \leq \max\{j_0, m\}$. 
So, we may assume that $P_{T, m}$ is propositionally satisfiable. 
\begin{itemize}
	\item Suppose that $\neg S'(\num{i})$ is a t.c.~of $P_{T, m}$, then $S'(\num{i})$ is a subformula of a formula contained in $P_{T, m}$ because $S'(\num{i})$ is propositionally atomic. 
Then, the G\"odel number of $S'(\num{i})$ is smaller than $m$, and hence $i \leq m \leq \max\{j_0, m\}$. 
	\item Suppose that there exists a sentence $\varphi$ such that $\neg \varphi \notin P_{T, m} \cup X_{i, m}$ and $S'(\num{i}) \to \neg \PR_{g_3}^{\mathrm{R}}(\gn{\varphi})$ is a t.c.~of $P_{T, m}$. 
	\begin{itemize}
		\item If $\neg \PR_{g_3}^{\mathrm{R}}(\gn{\varphi})$ is a t.c.~of $P_{T, m}$, then so is $S'(\num{j_0}) \to \neg \PR_{g_3}^{\mathrm{R}}(\gn{\varphi})$. 
		Since $A_r$ has no subformulas of the form $\Box D$, we have that $X_{j_0, m} = \emptyset$. 
		So, $\neg \varphi \notin P_{T, m} \cup X_{j_0, m}$. 
		We obtain $i \leq j_0 \leq \max\{j_0, m\}$. 
		
		\item If $\neg \PR_{g_3}^{\mathrm{R}}(\gn{\varphi})$ is not a t.c.~of $P_{T, m}$, then $\neg S'(\num{i})$ is a t.c.~of the propositionally satisfiable set $P_{T,m} \cup \{\PR_{g_3}^{\mathrm{R}}(\gn{\varphi})\}$.  
		Since $S'(\num{i})$ is distinct from propositionally atomic $\PR_{g_3}^{\mathrm{R}}(\gn{\varphi})$, we have that $S'(\num{i})$ is a subformula of a formula in $P_{T,m}$ as above. 
		We have $i \leq m \leq \max\{j_0, m\}$. 
	\end{itemize}
\end{itemize}
We have shown $i \leq \max\{j_0, m\}$. 
Moreover, the above argument can be carried out in $\PA$.

Secondly, we define the function $g_3$. 
We only give the definition of Procedure 2 of the construction of $g_3$. 

\vspace{0.1in}

\textsc{Procedure 2}\\
Let $m$, $i \neq 0$, and $n$ be such that $h'(m) = 0$, $h'(m+1) = i$, and $i \in W_n$. 
Let $\chi_0, \ldots, \chi_{k-1}$ be the listing of all elements of the set $X_{i, m-1}$ arranged in descending order of G\"odel numbers. 
For $l < k$, define
\[
	g_3(m + l) = \chi_l. 
\]
And define 
\[
	g_3(m + k + t) = \xi_t. 
\]
The definition of $g_3$ is finished. 
The construction of $g_3$ is exactly the same as that of $g_2$ in the proof of Theorem \ref{Thm:NR}, except that it is based on the family of $\N$-models corresponding to the logic $\NRF$ and uses $X_{i, m-1}$ and $h'$ instead of $X$ and $h$, respectively.

Similarly to Proposition \ref{Prop:h}, the following claim holds. 

\begin{cl}\label{Cl:h}
\leavevmode
\begin{enumerate}
	\item $\PA \vdash \forall x \forall y(0 < x < y \land S'(x) \to \neg S'(y))$. 
	\item $\PA \vdash \neg \Con_T \leftrightarrow \exists x(S'(x) \land x \neq 0)$. 
	\item For each $i \in \omega \setminus \{0\}$, $T \nvdash \neg S'(\num{i})$. 
	\item For each $n \in \omega$, $\PA \vdash \forall x \forall y(h'(x) = 0 \land h'(x + 1) = y \land y \neq 0 \to x > \num{n})$. 
\end{enumerate}
\end{cl}
\begin{proof}
1. This is straightforward from the definition of $h'$. 

2. The implication $\to$ is easy, and so we prove the implication $\leftarrow$. 
Argue in $\PA$: 
Suppose that $S'(i)$ holds for some $i \neq 0$. 
Let $m$ and $n$ be such that $h'(m) = 0$, $h'(m+1) = i$, and $i \in W_n$. 
Also, let $k$ be the cardinality of the set $X_{i, m-1}$. 
We would like to show that $T$ is inconsistent. 
We distinguish the following two cases: 
\begin{itemize}
	\item Case 1: $\neg S'(\num{i})$ is a t.c.~of $P_{T, m}$. \\
	Then, $\neg S'(\num{i})$ is $T$-provable. 
	Since $S'(\num{i})$ is a true $\Sigma_1$ sentence, it is provable in $T$. 
	Therefore, $T$ is inconsistent. 

	\item Case 2: There exists a $\varphi$ such that $\neg \varphi \notin P_{T, m} \cup X_{i, m}$ and $S'(\num{i}) \to \neg \PR_{g_3}^{\mathrm{R}}(\gn{\varphi})$ is a t.c.~of $P_{T, m}$. \\
	Then, $\neg \varphi \notin P_{T, m-1} \cup X_{i, m-1}$. 
	Hence, $\neg \varphi \notin \{g_3(0), \ldots, g_3(m + k -1)\}$. 
	Let $s$ and $u$ be such that $\xi_s \equiv \varphi$ and $\xi_u \equiv \neg \varphi$. 
	We have that $s < u$, $g_3(m + k + s) = \varphi$, and $g_3(m + k + u) = \neg \varphi$. 
	In particular, $g_3(m + k + u)$ is the first output of $\neg \varphi$ by $g_3$. 
	Hence, $\PR_{g_3}^{\mathrm{R}}(\gn{\varphi})$ holds. 
	Then, $S'(\num{i}) \land \PR_{g_3}^{\mathrm{R}}(\gn{\varphi})$ is a true $\Sigma_1$ sentence, and so it is provable in $T$.
	Since $S'(\num{i}) \to \neg \PR_{g_3}^{\mathrm{R}}(\gn{\varphi})$ is also provable in $T$, we have that $T$ is inconsistent. 
\end{itemize}

3. Suppose $T \vdash \neg S'(\num{i})$ for $i \neq 0$. 
Let $p$ be a $T$-proof of $\neg S'(\num{i})$. 
Then, $\neg S'(\num{i}) \in P_{T, p}$, and thus $h'(p+1) \neq 0$. 
This means that $\exists x(S'(x) \land x \neq 0)$ is true. 
By clause 2, $T$ is inconsistent, a contradiction. 

4. Since $T$ is consistent, we have that $h'(m+1) = 0$ for all $m \in \omega$ by clause 2. 
So, clause 4 is immediately obtained. 
\end{proof}

The following claim is proved as in the proof of Theorem \ref{Thm:NR}.

\begin{cl}\label{Cl:g_3_eq}\leavevmode
\begin{enumerate}
	\item $\PA + \Con_T \vdash \forall x \forall y \Bigl(\Fml(x) \to \bigl(\Proof_T(x, y) \leftrightarrow x = g_3(y) \bigr) \Bigr)$. 
	\item $\PA \vdash \forall x \Bigl(\Fml(x) \to \bigl(\Prov_T(x) \leftrightarrow \PR_{g_3}(x) \bigr) \Bigr)$. 
\end{enumerate}
\end{cl}

Hence, $\PR_{g_3}^{\mathrm{R}}(x)$ is a Rosser provability predicate of $T$. 

\begin{cl}\label{Cl:g_3}
Let $i \in W_n$ and $B \in \Sub(A_n)$. 
\begin{enumerate}
	\item If $i \Vdash_n B$, then $\PA \vdash S'(\num{i}) \to f_{g_3}(B)$. 
	\item If $i \nVdash_n B$, then $\PA \vdash S'(\num{i}) \to \neg f_{g_3}(B)$. 
\end{enumerate}
\end{cl}
\begin{proof}
We prove the claim by induction on the construction of $B \in \Sub(A_n)$. 
We only give a proof of the case $B \equiv \Box C$. 

1. Suppose that $i \Vdash_n \Box C$. 
For each $l \in W_n$ with $l \Vdash_n C$, by the induction hypothesis, we have $\PA \vdash S'(\num{l}) \to f_{g_3}(C)$. 
Since $W_n$ is finite and the model $(W_n, \{\prec_{n, B}\}_{B \in \MF}, \Vdash_n)$ is primitive recursively represented, we find a $p \in \omega$ such that
\begin{equation}\label{fml1}
	\PA \vdash \forall x \in W_n \bigl(x \Vdash_n C \to \exists y < \num{p}\, \Proof_T(\gn{S'(\dot{x}) \to f_{g_3}(C)}, y) \bigr).
\end{equation}
Since $\Box C \in \Sub(A_n)$ and $(W_n, \{\prec_{n, B}\}_{B \in \MF}, \Vdash_n)$ is $\Sub(A_n)$-serial, there exists a $j \in W_n$ such that $i \prec_{n, C} j$. 
Then, $j \Vdash_n C$ and there exists a $T$-proof of $S'(\num{j}) \to f_{g_3}(C)$ smaller than $p$. 

We argue in $\PA + S'(\num{i})$: 
Let $m$ be such that $h'(m) = 0$ and $h'(m+1) = i$. 

If $\neg f_{g_3}(C) \in P_{T, m-1}$, then $\neg S'(\num{j})$ is a t.c.~of $P_{T, m-1}$ because $m > p$ by Claim \ref{Cl:h}.4. 
We have $h'(m) \neq 0$ by the definition of $h'$, and this is a contradiction. 
Hence, $\neg f_{g_3}(C) \notin P_{T, m-1}$. 
If $\neg f_{g_3}(C) \in X_{i, m-1}$, then there exist $\Box D \in \Sub(A_n)$ and $l \in W_n$ such that $\neg f_{g_3}(C) \equiv \neg f_{g_3}(D)$, $i \prec_{n, D} l$, and $S'(\num{l}) \to \neg f_{g_3}(D)$ is a t.c.~of $P_{T, m-1}$.  
We have $C \equiv D$. 
Since $i \Vdash_n \Box C$ and $i \prec_{n, C} l$, we have $l \Vdash_n C$. 
By (\ref{fml1}), we obtain that $S'(\num{l}) \to f_{g_3}(C)$ has a $T$-proof smaller than $p$. 
Since $m > p$ by Claim \ref{Cl:h}.4, $S'(\num{l}) \to f_{g_3}(C)$ is in $P_{T, m-1}$. 
Since both $S'(\num{l}) \to f_{g_3}(C)$ and $S'(\num{l}) \to \neg f_{g_3}(C)$ are t.c.'s of $P_{T, m-1}$, we have that $\neg S'(\num{l})$ is a t.c.~of $P_{T, m-1}$, a contradiction with $h'(m) = 0$. 
Thus, we have $\neg f_{g_3}(C) \notin X_{i, m-1}$, that is, $\neg f_{g_3}(C) \notin \{g_3(m), \ldots, g_3(m + k -1)\}$. 
Therefore, we obtain $\neg f_{g_3}(C) \notin \{g_3(0), \ldots, g_3(m + k -1)\}$. 

Let $s$ and $u$ be such that $\xi_s \equiv f_{g_3}(C)$ and $\xi_u \equiv \neg f_{g_3}(C)$. 
Then, $s < u$, $g_3(m + k + s) = f_{g_3}(C)$, $g_3(m + k + u) = \neg f_{g_3}(C)$, and this is the first $g_3$-output of $\neg f_{g_3}(C)$. 
Therefore, $\PR_{g_3}^{\mathrm{R}}(\gn{f_{g_3}(C)})$ holds. 
That is, $f_{g_3}(\Box C)$ holds. 

2. Suppose $i \nVdash_n \Box C$. 
There exists a $j \in W_n$ such that $i \prec_{n, C} j$ and $j \nVdash_n C$. 
By the induction hypothesis, $\PA \vdash S'(\num{j}) \to \neg f_{g_3}(C)$. 
Let $p$ be a $T$-proof of $S'(\num{j}) \to \neg f_{g_3}(C)$. 

We reason in $\PA + S'(\num{i})$: 
Let $m$ be such that $h'(m) = 0$ and $h'(m+1) = i$. 
Since $m > p$ by Claim \ref{Cl:h}.4, $S'(\num{j}) \to \neg f_{g_3}(C)$ is a t.c.~of $P_{T, m-1}$, and hence we have $\neg f_{g_3}(C) \in X_{i, m-1}$. 
If $f_{g_3}(C) \in P_{T, m-1}$, then $\neg S'(\num{j})$ is a t.c.~of $P_{T, m-1}$, a contradiction. 
Hence, $f_{g_3}(C) \notin \{g_3(0), \ldots, g_3(m-1)\}$. 
Then, even if $f_{g_3}(C) \in X_{i, m-1}$, we get that $g_3$ outputs $\neg f_{g_3}(C)$ earlier than any output of $f_{g_3}(C)$. 
Hence, $\neg \PR_{g_3}^{\mathrm{R}}(\gn{f_{g_3}(C)})$ holds. 
That is, $\neg f_{g_3}(\Box C)$ holds. 
\end{proof}

We prove that $\PR_{g_3}^{\mathrm{R}}(x)$ satisfies $\D{3}$.

\begin{cl}\label{Cl:g_3_4}
For any $\LA$-formula $\varphi$, $\PA \vdash \PR_{g_3}^{\mathrm{R}}(\gn{\varphi}) \to \PR_{g_3}^{\mathrm{R}}(\gn{\PR_{g_3}^{\mathrm{R}}(\gn{\varphi})})$. 
\end{cl}
\begin{proof}
Since $\PR_{g_3}^{\mathrm{R}}(\gn{\varphi})$ is a $\Sigma_1$ sentence, $\PA \vdash \PR_{g_3}^{\mathrm{R}}(\gn{\varphi}) \to \Prov_T(\gn{\PR_{g_3}^{\mathrm{R}}(\gn{\varphi})})$. 
By Claim \ref{Cl:g_3_eq}.1, we have 
\[
	\PA + \Con_T \vdash \forall x \Bigl(\Fml(x) \to \bigl(\Prov_T^{\mathrm{R}}(x) \leftrightarrow \PR_{g_3}^{\mathrm{R}}(x) \bigr) \Bigr).
\]
Since $\PA + \Con_T \vdash \forall x \Bigl(\Fml(x) \to \bigl(\Prov_T(x) \leftrightarrow \Prov_T^{\mathrm{R}}(x) \bigr) \Bigr)$, we obtain $\PA + \Con_T \vdash \forall x \Bigl(\Fml(x) \to \bigl(\Prov_T(x) \leftrightarrow \PR_{g_3}^{\mathrm{R}}(x) \bigr) \Bigr)$.
Thus, 
\[
	\PA + \Con_T \vdash \PR_{g_3}^{\mathrm{R}}(\gn{\varphi}) \to \PR_{g_3}^{\mathrm{R}}(\gn{\PR_{g_3}^{\mathrm{R}}(\gn{\varphi})}).
\] 

We reason in $\PA + \neg \Con_T + \neg \PR_{g_3}^{\mathrm{R}}(\gn{\PR_{g_3}^{\mathrm{R}}(\gn{\varphi})})$: 
By Claim \ref{Cl:h}.2, there exists an $i \neq 0$ such that $S'(i)$ holds. 
Let $m$ and $n$ be such that $h'(m) = 0$, $h'(m+1) = i$, and $i \in W_n$. 
If $\neg \varphi \in P_{T, m-1}$, then $\varphi \notin P_{T, m-1}$ because $\neg S(\num{j})$ is not a t.c.~of $P_{T, m-1}$ for all $j \neq 0$. 
In this case, $\neg \PR_{g_3}^{\mathrm{R}}(\gn{\varphi})$ holds. 

Therefore, in the following, we assume that $\neg \varphi \notin P_{T, m-1}$. 
Let $k$ be the cardinality of the set $X_{i, m-1}$. 
Since $\neg \PR_{g_3}^{\mathrm{R}}(\gn{\PR_{g_3}^{\mathrm{R}}(\gn{\varphi})})$ holds, $\neg \PR_{g_3}^{\mathrm{R}}(\gn{\varphi})$ is output by $g_3$ earlier than any output of $\PR_{g_3}^{\mathrm{R}}(\gn{\varphi})$. 
Let $s$ and $u$ be such that $\xi_s \equiv \PR_{g_3}^{\mathrm{R}}(\gn{\varphi})$ and $\xi_u \equiv \neg \PR_{g_3}^{\mathrm{R}}(\gn{\varphi})$. 
Then, $g_3(m + k + u) = \neg \PR_{g_3}^{\mathrm{R}}(\gn{\varphi})$. 
Since $s < u$ and $g_3(m + k + s) = \PR_{g_3}^{\mathrm{R}}(\gn{\varphi})$, $g_3(m + k + u)$ is not the first output of $\neg \PR_{g_3}^{\mathrm{R}}(\gn{\varphi})$. 
It follows that $\neg \PR_{g_3}^{\mathrm{R}}(\gn{\varphi}) \in P_{T, m-1} \cup X_{i, m-1}$. 
We would like to show that $\neg \PR_{g_3}^{\mathrm{R}}(\gn{\varphi})$ holds. 
We distinguish the following two cases: 

\begin{itemize}
	\item Case 1: $\neg \PR_{g_3}^{\mathrm{R}}(\gn{\varphi}) \in P_{T, m-1}$. \\
	If $\neg \varphi \notin P_{T, m-1} \cup X_{i, m-1}$, then $0 \neq h'(m) \leq i$ because the G\"odel number of $\varphi$ is smaller than $m-1$ by Claim \ref{Cl:h} and $S'(\num{i}) \to \neg \PR_{g_3}^{\mathrm{R}}(\gn{\varphi})$ is a t.c.~of $P_{T, m-1}$.  
	This is a contradiction. 
	Hence, $\neg \varphi \in P_{T, m-1} \cup X_{i, m-1}$. 
	Since $\neg \varphi \notin P_{T, m-1}$ by the assumption, we have $\neg \varphi \in X_{i, m-1}$. 
	Then, there exist $\Box D \in \Sub(A_n)$ and $l \in W_n$ such that $\neg \varphi \equiv \neg f_{g_3}(D)$, $i \prec_{n, D} l$, and $S'(\num{l}) \to \neg f_{g_3}(D)$ is a t.c.~of $P_{T, m-1}$. 
	Since $S'(\num{l}) \to \neg \varphi$ is a t.c.~of $P_{T, m-1}$ but $\neg S'(\num{l})$ is not, we have $\varphi \notin P_{T, m-1}$. 
	Thus, $\varphi \notin \{g_3(0), \ldots, g_3(m-1)\}$. 
	Then, even if $\varphi \in X_{i, m-1}$, $\neg \varphi$ is output by $g_3$ earlier than any output of $\varphi$. 
	Therefore, $\neg \PR_{g_3}^{\mathrm{R}}(\gn{\varphi})$ holds. 

	\item Case 2: $\neg \PR_{g_3}^{\mathrm{R}}(\gn{\varphi}) \in X_{i, m-1}$. \\
	Then, there exist $\Box D \in \Sub(A_n)$ and $j \in W_n$ such that $\neg \PR_{g_3}^{\mathrm{R}}(\gn{\varphi}) \equiv \neg f_{g_3}(D)$, $i \prec_{n, D} j$, and $S'(\num{j}) \to \neg f_{g_3}(D)$ is a t.c.~of $P_{T, m-1}$. 
	It follows that $\PR_{g_3}^{\mathrm{R}}(\gn{\varphi}) \equiv f_{g_3}(D)$. 
	By the definition of $f_{g_3}$, there exists a $\Box E \in \Sub(A_n)$ such that $D \equiv \Box E$ and $\PR_{g_3}^{\mathrm{R}}(\gn{\varphi}) \equiv \PR_{g_3}^{\mathrm{R}}(\gn{f_{g_3}(E)})$. 
	We have that $\varphi \equiv f_{g_3}(E)$, $\Box \Box E \in \Sub(A_n)$, and $i \prec_{n, \Box E} j$. 

	If $\neg f_{g_3}(E) \notin X_{j, m-1}$, then $\neg f_{g_3}(E) \notin P_{T, m-1} \cup X_{j, m-1}$ by the assumption. 
	Since the G\"odel number of $f_{g_3}(E)$ is smaller than $m-1$ and $S'(\num{j}) \to \neg \PR_{g_3}^{\mathrm{R}}(\gn{f_{g_3}(E)})$ is a t.c.~of $P_{T, m-1}$, we have $h'(m) \neq 0$. 
	This is a contradiction. 
	Hence, $\neg f_{g_3}(E) \in X_{j, m-1}$. 
	Then, there exists an $l \in W_n$ such that $j \prec_{n, E} l$ and $S'(\num{l}) \to \neg f_{g_3}(E)$ is a t.c.~of $P_{T, m-1}$. 
	Since $i \prec_{n, \Box E} j$ and $j \prec_{n, E} l$, we obtain $i \prec_{n, E} l$ because $(W_n, \{\prec_{n, B}\}_{B \in \MF}, \Vdash_n)$ is $\Sub(A_n)$-transitive. 
	Therefore, $\neg f_{g_3}(E) \in X_{i, m-1}$, and hence $\neg \varphi \in X_{i, m-1}$. 

	Since $S'(\num{l}) \to \neg \varphi$ is a t.c.~of $P_{T, m-1}$, we have $\varphi \notin P_{T, m-1}$. 
	Hence, $\varphi \notin \{g_3(0), \ldots, g_3(m-1)\}$. 
	Then, even if $\varphi \in X_{i, m-1}$, $g_3$ outputs $\neg \varphi$ earlier than any output of $\varphi$. 
	Therefore, $\neg \PR_{g_3}^{\mathrm{R}}(\gn{\varphi})$ holds. 
\end{itemize}

	We have proved $\PA + \neg \Con_T \vdash \PR_{g_3}^{\mathrm{R}}(\gn{\varphi}) \to \PR_{g_3}^{\mathrm{R}}(\gn{\PR_{g_3}^{\mathrm{R}}(\gn{\varphi})})$. 
By the law of excluded middle, $\PA \vdash \PR_{g_3}^{\mathrm{R}}(\gn{\varphi}) \to \PR_{g_3}^{\mathrm{R}}(\gn{\PR_{g_3}^{\mathrm{R}}(\gn{\varphi})})$. 
\end{proof}

The first clause of the theorem follows from Claims \ref{Cl:g_3_eq} and \ref{Cl:g_3_4}. 
The second clause follows from Claims \ref{Cl:h}.3 and \ref{Cl:g_3}. 
\end{proof}

\begin{cor}
\begin{align*}
	\NRF & = \bigcap \{\PL(\PR_T^{\mathrm{R}}) \mid \PR_T^{\mathrm{R}}(x)\ \text{is a Rosser provability predicate of}\ T\ \text{satisfying}\ \D{3}\}.  
\end{align*}
Moreover, there exists a Rosser provability predicate $\PR_T^{\mathrm{R}}(x)$ of $T$ such that $\NRF = \PL(\PR_T^{\mathrm{R}})$. 
\end{cor}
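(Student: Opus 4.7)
The plan is to combine the Rosser-style construction from the proof of Theorem~\ref{Thm:NR} with the $\D{3}$-forcing idea from the proof of Theorem~\ref{Thm:NF}, building a primitive recursive enumeration $g$ of the theorems of $T$ whose associated Rosser predicate $\PR_g^{\mathrm{R}}(x) := \exists y\bigl(\Fml(x) \land x = g(y) \land \forall z < y\, \dot{\neg}(x) \neq g(z)\bigr)$ witnesses both arithmetical soundness with respect to $\NRF$ and faithfulness with respect to a canonical interpretation into a disjoint union of finite Kripkean countermodels.

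First I would primitive recursively enumerate all $\NRF$-unprovable formulas $\langle A_n\rangle_{n\in\omega}$ and, by Theorem~\ref{Thm:complNRF}, pick for each $n$ a finite $\Sub(A_n)$-transitive and $\Sub(A_n)$-serial $\N$-countermodel $(W_n,\{\prec_{n,B}\},\Vdash_n)$ to $A_n$. Joining them as in previous sections yields a primitive recursively representable $\N$-model $\mathcal{M}=(W,\{\prec_B\},\Vdash)$ with $W = \omega\setminus\{0\}$, and I put $f(p) :\equiv \exists x(S'(x)\land x\Vdash p)$ for an appropriate $\Sigma_1$ witness predicate $S'(x)$. The function $g$ would run a Procedure~1 identical to that of $g_0$, enumerating $T$-proofs until the witness function fires, and then switch to a Procedure~2 that first dumps a pre-enqueued finite list of negations $\neg f(D)$ (those designed to kill $\Box D$ at the activating world $i$) in descending $\gn{\cdot}$-order, and afterwards enumerates all $\LA$-formulas in order $\xi_0,\xi_1,\ldots$, exactly as in the proof of Theorem~\ref{Thm:NR}.

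The key new ingredient, needed because we must also secure $\D{3}$, is that the plain witness function $h$ of Proposition~\ref{Prop:h} is too blunt. I would instead use a refined $h'$ defined via the double recursion theorem jointly with $g$: $h'$ fires at stage $m+1$ not only when $\neg S'(\num{j})$ is a tautological consequence of $P_{T,m}$, but also when $T$ has already proved some implication of the form $\psi\to(S'(\num{j})\to \neg\PR_g^{\mathrm{R}}(\gn{\varphi}))$ for a $\varphi$ whose negation has not yet been committed to the enqueued set. Correspondingly, the pre-enqueued set $X_{i,m-1}$ at activation is exactly the set of $\neg f(D)$ for $\Box D\in\Sub(A_n)$ such that there is a $\prec_{n,D}$-successor $l$ of $i$ and a $T$-proved dominator of $S'(\num{l})\to\neg f(D)$. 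After verifying that $h'$ is primitive recursive (the candidate $i$ is bounded by $m$), I would establish the analogues of Proposition~\ref{Prop:h}: consistency of $T$ forces $h'\equiv 0$, inconsistency forces it to fire, and $T\nvdash \neg S'(\num{i})$ for $i\neq 0$. From this and the fact that Procedure~2 ultimately outputs every formula, $\PR_g^{\mathrm{R}}(x)$ provably agrees with $\Prov_T(x)$ and hence is a Rosser predicate of $T$.

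Next I would prove the truth lemma by induction on $B\in\Sub(A_n)$: $\PA\vdash S'(\num{i})\to f(B)$ if $i\Vdash_n B$ and $\PA\vdash S'(\num{i})\to \neg f(B)$ otherwise. The modal case exploits $\Sub(A_n)$-seriality (to get an $f(C)$-output at a witness successor) and $\Sub(A_n)$-transitivity (needed when $B\equiv \Box\Box E$ to handle the case where $f(\Box E)$ might be matched in Procedure~2). This yields clause~(2) of the theorem in conjunction with $T\nvdash\neg S'(\num{i})$. The main obstacle, and the whole point of the refinement of $h$, is the verification of $\D{3}$: $\PA \vdash \PR_g^{\mathrm{R}}(\gn{\varphi})\to\PR_g^{\mathrm{R}}(\gn{\PR_g^{\mathrm{R}}(\gn{\varphi})})$. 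Under $\Con_T$ this follows from the $\Sigma_1$-completeness bridge and the fact that $\Prov_T$ and $\PR_g^{\mathrm{R}}$ are provably equivalent; the delicate case is under $\neg\Con_T$. There $h'$ has fired at some world $i\in W_n$, and if $\neg\PR_g^{\mathrm{R}}(\gn{\PR_g^{\mathrm{R}}(\gn{\varphi})})$ held, then $\neg\PR_g^{\mathrm{R}}(\gn{\varphi})$ would have been output before $\PR_g^{\mathrm{R}}(\gn{\varphi})$, hence it must already lie in $P_{T,m-1}\cup X_{i,m-1}$. In the $X_{i,m-1}$-subcase one obtains $\PR_g^{\mathrm{R}}(\gn{\varphi})\equiv f(\Box E)$ for some $\Box E\in\Sub(A_n)$ with $\Box\Box E\in\Sub(A_n)$ and a successor $j$ of $i$ in $\prec_{n,\Box E}$, together with a $T$-proved dominator of $S'(\num{j})\to \neg f(\Box E)$; the extra firing clause of $h'$ then forces $\neg f(E)$ into $X_{j,m-1}$, and $\Sub(A_n)$-transitivity promotes this to membership in $X_{i,m-1}$, thereby guaranteeing that $\neg\varphi$ is output before $\varphi$, i.e.\ $\neg \PR_g^{\mathrm{R}}(\gn{\varphi})$ holds. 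This tight interlocking of $h'$, $X_{i,m}$ and the $\Sub(A_n)$-transitivity of the countermodels is the nontrivial part of the argument; the surrounding bookkeeping follows the patterns of Theorems~\ref{Thm:NF} and~\ref{Thm:NR}.
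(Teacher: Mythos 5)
Your proposal reconstructs the paper's own argument for Theorem \ref{Thm:NRF} (on which the corollary rests) essentially verbatim: the double-recursion-theorem definition of a refined witness function $h'$ with the extra firing clause involving $\psi \to (S'(\num{j}) \to \neg \PR^{\mathrm{R}}(\gn{\varphi}))$, the pre-enqueued sets $X_{i,m}$ output in descending G\"odel-number order, and the use of $\Sub(A_n)$-transitivity to promote $\neg f(E) \in X_{j,m-1}$ to $X_{i,m-1}$ in the $\D{3}$ verification are exactly the paper's devices. The approach is correct and the same as the paper's.
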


\section*{Acknowledgement}

This work was supported by JSPS KAKENHI Grant Number JP19K14586. 
The author would like to thank Sohei Iwata, Haruka Kogure, and Yuya Okawa for their helpful comments. 
The author would also like to thank the anonymous referees for their insightful suggestions.

\bibliographystyle{plain}
\bibliography{ref}

\begin{thebibliography}{10}

\bibitem{Ara}
Toshiyasu Arai.
\newblock Derivability conditions on {Rosser}'s provability predicates.
\newblock {\em Notre Dame Journal of Formal Logic}, 31(4):487--497, 1990.

\bibitem{AB}
Sergei~N. Artemov and Lev~D. Beklemishev.
\newblock Provability logic.
\newblock In D.~Gabbay and F.~Guenthner, editors, {\em Handbook of
  Philosophical Logic}, volume~13, pages 189--360. Springer, Dordrecht, 2nd
  edition, 2005.

\bibitem{BM}
Claudio Bernardi and Franco Montagna.
\newblock Equivalence relations induced by extensional formulae: classification
  by means of a new fixed point property.
\newblock {\em Fundamenta Mathematicae}, 124:221--233, 1984.

\bibitem{Boo}
George {Boolos}.
\newblock {\em {The logic of provability}}.
\newblock Cambridge: Cambridge University Press, 1993.

\bibitem{FMT}
Melvin~C. Fitting, V.~Wiktor Marek, and Miros{\l}aw Truszczy{\'n}ski.
\newblock The pure logic of necessitation.
\newblock {\em Journal of Logic and Computation}, 2(3):349--373, 1992.

\bibitem{GS}
David Guaspari and Robert~M. Solovay.
\newblock Rosser sentences.
\newblock {\em Annals of Mathematical Logic}, 16:81--99, 1979.

\bibitem{JD}
Giorgi {Japaridze} and Dick {de Jongh}.
\newblock {The logic of provability}.
\newblock In {\em Handbook of proof theory}, pages 475--546. Amsterdam:
  Elsevier, 1998.

\bibitem{KK}
Haruka Kogure and Taishi Kurahashi.
\newblock Arithmetical completeness theorems for monotonic modal logics.
\newblock {\em Annals of Pure and Applied Logic}, 174(7), 2023.
\newblock Id/No 103271.

\bibitem{Kur18_1}
Taishi Kurahashi.
\newblock Arithmetical completeness theorem for modal logic {{\(\mathsf{K}\)}}.
\newblock {\em Studia Logica}, 106(2):219--235, 2018.

\bibitem{Kur18_2}
Taishi Kurahashi.
\newblock Arithmetical soundness and completeness for {{\(\Sigma_{2}\)}}
  numerations.
\newblock {\em Studia Logica}, 106(6):1181--1196, 2018.

\bibitem{Kur20}
Taishi Kurahashi.
\newblock Rosser provability and normal modal logics.
\newblock {\em Studia Logica}, 108(3):597--617, 2020.

\bibitem{Lin}
Per {Lindstr\"om}.
\newblock {\em {Aspects of incompleteness}}, volume~10 of {\em {Lecture Notes
  in Logic}}.
\newblock Natick, MA: Association for Symbolic Logic, 2003.

\bibitem{Lob}
Martin~Hugo L{\"o}b.
\newblock Solution of a problem of {Leon} {Henkin}.
\newblock {\em The Journal of Symbolic Logic}, 20:115--118, 1955.

\bibitem{Mon63}
Richard Montague.
\newblock Syntactical treatments of modality, with corollaries on reflexion
  principles and finite axiomatizability.
\newblock {\em Acta Philosophica Fennica}, 16:153--167, 1963.

\bibitem{Ros}
Barkley Rosser.
\newblock Extensions of some theorems of {G{\"o}del} and {Church}.
\newblock {\em The Journal of Symbolic Logic}, 1:87--91, 1936.

\bibitem{Sha91}
V.~Yu. Shavrukov.
\newblock On {Rosser}'s provability predicate.
\newblock {\em Zeitschrift f{\"u}r Mathematische Logik und Grundlagen der
  Mathematik}, 37(4):317--330, 1991.

\bibitem{Sha94}
V.~Yu. Shavrukov.
\newblock A smart child of {Peano}'s.
\newblock {\em Notre Dame Journal of Formal Logic}, 35(2):161--185, 1994.

\bibitem{Smo89}
C.~Smory{\'n}ski.
\newblock Arithmetic analogues of {McAloon}'s unique {Rosser} sentences.
\newblock {\em Archive for Mathematical Logic}, 28(1):1--21, 1989.

\bibitem{Smo}
Craig Smory{\'n}ski.
\newblock {\em Self-reference and modal logic}.
\newblock Universitext. Springer, Cham, 1985.

\bibitem{Sol}
Robert~M. {Solovay}.
\newblock {Provability interpretations of modal logic}.
\newblock {\em {Israel Journal of Mathematics}}, 25:287--304, 1976.

\bibitem{Vis}
Albert Visser.
\newblock Peano's smart children: {A} provability logical study of systems with
  built-in consistency.
\newblock {\em Notre Dame Journal of Formal Logic}, 30(2):161--196, 1989.

\bibitem{Vis21}
Albert Visser.
\newblock The absorption law. {Or}: how to {Kreisel} a
  {Hilbert}-{Bernays}-{L{\"o}b}.
\newblock {\em Archive for Mathematical Logic}, 60(3-4):441--468, 2021.

\end{thebibliography}

\appendix

\section{Appendix: $\Sigma_1$ provability predicates corresponding to $\K$}\label{Sec:K}

In \cite{Kur18_1}, it is proved that there exists a $\Sigma_2$ provability predicate $\PR_T(x)$ of $T$ such that $\K = \PL(\PR_T)$. 
It follows
\[
	\K = \bigcap \{\PL(\PR_T) \mid \PR_T(x)\ \text{is a provability predicate of}\ T\ \text{satisfying}\ \D{2}\}. 
\]
As in Theorems \ref{Thm:N} and \ref{Thm:NF}, we prove that the provability logic of all $\Sigma_1$ provability predicates satisfying $\D{2}$ is also $\K$.

\begin{thm}[The uniform arithmetical completeness of $\K$]\label{Thm:K}
There exists a $\Sigma_1$ provability predicate $\PR_T(x)$ of $T$ such that
\begin{enumerate}
	\item for any $A \in \MF$ and any arithmetical interpretation $f$ based on $\PR_T(x)$, if $\K \vdash A$, then $\PA \vdash f(A)$; and
	\item there exists an arithmetical interpretation $f$ based on $\PR_T(x)$ such that for any $A \in \MF$, $\K \vdash A$ if and only if $T \vdash f(A)$.
\end{enumerate}
\end{thm}
\begin{proof}
Let $(W, \prec, \Vdash)$ be a primitive recursively representable Kripke model satisfying the following conditions: 
\begin{itemize}
	\item $W = \omega \setminus \{0\}$, 
	\item $(W, \prec, \Vdash)$ is the disjoint union of finite Kripke models and for every $i \in W$, we can primitive recursively find the finite set $\{j \in W \mid i \prec j\}$ that may be empty, 
	\item for any $\K$-unprovable $\LB$-formula $A$, there exists an $i \in W$ such that $i \nVdash A$. 
\end{itemize}

We define the primitive recursive function $g_4$ corresponding to this theorem. 
We only describe Procedure 2. 

\vspace{0.1in}

\textsc{Procedure 2}\\
Let $m$ and $i \neq 0$ be such that $h(m) = 0$ and $h(m+1) = i$. 
Define
\[
	g_4(m + t) = \begin{cases}
	\xi_t & \text{if}\ \xi_t\ \text{is a t.c.~of}\ P_{T, m-1} \cup \bigl\{\bigvee_{i \prec j} S(\num{j}) \bigr\}, \\
	0 & \text{otherwise}.
	\end{cases}
\]
Notice that the empty disjunction represents $0=1$. 
Our definition of $g_4$ is finished. 
Let $f_{g_4}$ be the arithmetical interpretation based on $\PR_{g_4}(x)$ defined by $f_{g_4}(p) \equiv \exists x(S(x) \land x \neq 0 \land x \Vdash p)$. 

The following claim is proved similarly as in the proof of Theorem \ref{Thm:N}. 

\begin{cl}\label{Cl:g_4_eq}
$\PA + \Con_T \vdash \forall x \forall y \Bigl(\Fml(x) \to \bigl(\Proof_T(x, y) \leftrightarrow x = g_4(y) \bigr) \Bigr)$. 
\end{cl}

Thus, $\PR_{g_4}(x)$ is a $\Sigma_1$ provability predicate of $T$. 

\begin{cl}\label{Cl:g_4_eq2}
$\PA$ proves the following statement: 
``Let $m$ and $i \neq 0$ be such that $h(m) = 0$ and $h(m+1) = i$. 
Then, for any $\LA$-formula $\varphi$, 
\[
	\PR_{g_4}(\gn{\varphi})\ \text{holds}\ \iff \varphi\ \text{is a t.c.~of}\ P_{T, m-1} \cup \bigl\{\bigvee_{i \prec j} S(\num{j}) \bigr\}\text{''}. 
\]
\end{cl}
\begin{proof}
$(\Rightarrow)$: This is because if $\xi_t \in P_{T, m-1}$, then $\xi_t$ is a t.c.~of $P_{T, m-1} \cup \bigl\{\bigvee_{i \prec j} S(\num{j}) \bigr\}$. 

$(\Leftarrow)$: Immediate from the definition of $g_4$. 
\end{proof}

\begin{cl}\label{Cl:g_4_2}
$\PA \vdash \forall x \forall y \bigl(\PR_{g_4}(x \dot{\to} y) \land \PR_{g_4}(x) \to \PR_{g_4}(y) \bigr)$. 
\end{cl}
\begin{proof}
Since $\PA \vdash \forall x \forall y \bigl(\Prov_T(x \dot{\to} y) \land \Prov_T(x) \to \Prov_T(y) \bigr)$, we have $\PA + \Con_T \vdash \forall x \forall y \bigl(\PR_{g_4}(x \dot{\to} y) \land \PR_{g_4}(x) \to \PR_{g_4}(y) \bigr)$ by Claim \ref{Cl:g_4_eq}. 

We argue in $\PA + \neg \Con_T$: 
By Proposition \ref{Prop:h}.2, there exists an $i \neq 0$ such that $S(\num{i})$ holds. 
Let $m$ be such that $h(m) = 0$ and $h(m+1) = i$. 
Suppose $\PR_{g_4}(\gn{\varphi \to \psi})$ and $\PR_{g_4}(\gn{\varphi})$ hold. 
By Claim \ref{Cl:g_4_eq2}, both $\varphi \to \psi$ and $\varphi$ are t.c.'s of $P_{T, m-1} \cup \bigl\{\bigvee_{i \prec j} S(\num{j}) \bigr\}$. 
We have that $\psi$ is also a t.c.~of $P_{T, m-1} \cup \bigl\{\bigvee_{i \prec j} S(\num{j}) \bigr\}$. 
By Claim \ref{Cl:g_4_eq2} again, we obtain that $\PR_{g_4}(\gn{\psi})$ holds. 

We have proved $\PA + \neg \Con_T \vdash \forall x \forall y \bigl(\PR_{g_4}(x \dot{\to} y) \land \PR_{g_4}(x) \to \PR_{g_4}(y) \bigr)$. 
By the law of excluded middle, we conclude $\PA \vdash \forall x \forall y \bigl(\PR_{g_4}(x \dot{\to} y) \land \PR_{g_4}(x) \to \PR_{g_4}(y) \bigr)$. 
\end{proof}

\begin{cl}\label{Cl:g_4_box}
Let $i, l \in W$. 
\begin{enumerate}
	\item $\PA \vdash S(\num{i}) \to \PR_{g_4} \Bigl( \gn{\bigvee_{i \prec j} S(\num{j})} \Bigr)$. 
	\item If $i \prec l$, then $\PA \vdash S(\num{i}) \to \neg \PR_{g_4}(\gn{\neg S(\num{l})})$.
\end{enumerate}
\end{cl}
\begin{proof}
We proceed in $\PA + S(\num{i})$: 
Let $m$ be such that $h(m) = 0$ and $h(m+1) = i$. 

1. Since $\bigvee_{i \prec j} S(\num{j})$ is a t.c.~of $P_{T, m-1} \cup \bigl\{\bigvee_{i \prec j} S(\num{j}) \bigr\}$, it follows that $\PR_{g_4} \Bigl( \gn{\bigvee_{i \prec j} S(\num{j})} \Bigr)$ holds by Claim \ref{Cl:g_4_eq2}. 

2. Suppose, towards a contradiction, that $\neg S(\num{l})$ is a t.c.~of $P_{T, m-1} \cup \bigl\{\bigvee_{i \prec j} S(\num{j}) \bigr\}$. 
Then, $\bigvee_{i \prec j} S(\num{j}) \to \neg S(\num{l})$ is a t.c.~of $P_{T, m-1}$. 
Since $S(\num{l})$ is a disjunct of $\bigvee_{i \prec j} S(\num{j})$, $S(\num{l}) \to \neg S(\num{l})$ is also a t.c.~of $P_{T, m-1}$. 
We have that $\neg S(\num{l})$ is a t.c.~of $P_{T, m-1}$. 
This is a contradiction. 
Hence, $\neg S(\num{l})$ is not a t.c.~of $P_{T, m-1} \cup \bigl\{\bigvee_{i \prec j} S(\num{j}) \bigr\}$. 
By Claim \ref{Cl:g_4_eq2}, $\neg \PR_{g_4}(\gn{\neg S(\num{l})})$ holds. 
\end{proof}

The following claim is proved in the same fashion as in the usual proof of Solovay's arithmetical completeness theorem by using Claim \ref{Cl:g_4_box}.

\begin{cl}\label{Cl:g_4}
Let $i \in W$ and $B \in \MF$. 
\begin{enumerate}
	\item If $i \Vdash B$, then $\PA \vdash S(\num{i}) \to f_{g_4}(B)$. 
	\item If $i \nVdash B$, then $\PA \vdash S(\num{i}) \to \neg f_{g_4}(B)$. 
\end{enumerate}
\end{cl}

The first clause of Theorem \ref{Thm:K} follows from Claims \ref{Cl:g_4_eq} and \ref{Cl:g_4_2}. 
The second clause follows from Proposition \ref{Prop:h}.3 and Claim \ref{Cl:g_4}. 
\end{proof}

\begin{cor}
\[
	\K = \bigcap \{\PL(\PR_T) \mid \PR_T(x)\ \text{is a}\ \Sigma_1\ \text{provability predicate of}\ T\ \text{satisfying}\ \D{2}\}. 
\]
Moreover, there exists a $\Sigma_1$ provability predicate $\PR_T(x)$ of $T$ such that $\K = \PL(\PR_T)$. 
\end{cor}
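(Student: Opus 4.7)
The plan is to read this corollary off from Theorem \ref{Thm:K} together with the standard observation that any provability predicate satisfying $\D{2}$ has provability logic containing $\K$. All the arithmetical work has already been done in the proof of Theorem \ref{Thm:K}; the corollary is essentially a repackaging.

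First I would establish the ``Moreover'' clause. Theorem \ref{Thm:K} hands us a $\Sigma_1$ provability predicate $\PR_T(x)$ of $T$ satisfying the two uniform completeness properties. I need this $\PR_T(x)$ to actually satisfy $\D{2}$; but in the proof of Theorem \ref{Thm:K} this is precisely Claim \ref{Cl:g_4_2}, which gives $\PA \vdash \forall x \forall y(\PR_{g_4}(x \dot{\to} y) \land \PR_{g_4}(x) \to \PR_{g_4}(y))$. From clause (1) of Theorem \ref{Thm:K}, every $\K$-theorem is mapped by any arithmetical interpretation based on $\PR_T(x)$ to a $\PA$-provable, hence $T$-provable, sentence, so $\K \subseteq \PL(\PR_T)$. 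Conversely, if $A \notin \K$, then the specific interpretation $f$ supplied by clause (2) satisfies $T \nvdash f(A)$, hence $A \notin \PL(\PR_T)$. Together these give $\K = \PL(\PR_T)$.

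For the intersection identity, the inclusion $\K \subseteq \bigcap\{\PL(\PR_T) \mid \PR_T(x)\ \text{is a}\ \Sigma_1\ \text{provability predicate of}\ T\ \text{satisfying}\ \D{2}\}$ is the routine soundness observation: for any provability predicate $\PR_T(x)$, $\PL(\PR_T)$ contains all propositional tautologies and is closed under \textsc{MP} and \textsc{Nec}; the condition $\D{2}$ supplies the schema $\Box(A \to B) \to (\Box A \to \Box B)$ in $\PL(\PR_T)$, so $\K \subseteq \PL(\PR_T)$. The reverse inclusion is immediate from the ``Moreover'' part, since the specific $\PR_T(x)$ constructed above is a member of the family being intersected and already has $\PL(\PR_T) = \K$.

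There is no real obstacle: the only thing one must be careful about is citing Claim \ref{Cl:g_4_2} (inside the proof of Theorem \ref{Thm:K}) to guarantee that the constructed predicate satisfies $\D{2}$, so that it genuinely belongs to the family on the right-hand side of the intersection.
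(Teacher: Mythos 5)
Your proposal is correct and matches how the paper intends the corollary to be read: the paper states it without a separate proof, as an immediate consequence of Theorem \ref{Thm:K} (clauses 1 and 2 giving $\K = \PL(\PR_{g_4})$), Claim \ref{Cl:g_4_2} (which certifies $\D{2}$, so the constructed predicate belongs to the family), and the standard observation that $\D{2}$ together with closure under \textsc{Nec} forces $\K \subseteq \PL(\PR_T)$. Nothing is missing.
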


\section{Appendix: Interchangeability of $\Box$ and $\Diamond$ in $\NR$}\label{Sec:NR_Box}

The language of propositional modal logic does not have the symbol $\Diamond$ as a modal operator. 
We introduce the expression $\Diamond A$ as the abbreviation of $\neg \Box \neg A$. 
However, in the logic $\N$, $\Box$ and $\Diamond$ are not dual operators, that is, we show that $\neg \Diamond \neg p \leftrightarrow \Box p$ is not provable in $\NRF$. 

\begin{prop}
$\NRF \nvdash \neg \Diamond \neg p \to \Box p$. 
\end{prop}
\begin{proof}
Let $\mathcal{F} = (\{a, b\}, \{\prec_B\}_{B \in \MF})$ be the $\N$-frame defined as follows: for each $B \in \MF$ and $x, y \in \{a, b\}$, 
\[
	x \prec_B y : \iff \begin{cases} y = b & \text{if}\ B \equiv p, \\ x = y & \text{otherwise.}\end{cases}
\]
Obvioulsy, $\mathcal{F}$ is serial. 
We prove that $\mathcal{F}$ is transitive. 
Suppose $x \prec_{\Box B} y \prec_{B} z$ for $B \in \MF$ and $x, y, z \in \{a, b\}$. 
Since $\Box B \not \equiv p$, we have $x = y$. 
Thus, we obtain $x \prec_B z$ because $y \prec_B z$. 

Let $\Vdash$ be a satisfaction relation on $\mathcal{F}$ such that $a \Vdash p$ and $b \nVdash p$. 
Since $\neg \neg p \not \equiv p$, we have that $a \prec_{\neg \neg p} x$ if and only of $x = a$. 
So, we obtain $a \Vdash \Box \neg \neg p$ because $a \Vdash \neg \neg p$. 
Since $a \Vdash \neg \neg \Box \neg \neg p$, we get $a \Vdash \neg \Diamond \neg p$. 
On the other hand, we have $a \nVdash \Box p$ because $a \prec_p b$ and $b \nVdash p$. 
Therefore, $\neg \Diamond \neg p \to \Box p$ is not valid in $\mathcal{F}$. 
By Corollary \ref{Cor:NF}, we conclude that $\NRF \nvdash \neg \Diamond \neg p \to \Box p$. 
\end{proof}

From another point of view, in $\NR$, the operators $\Box$ and $\Diamond$ have an interesting relationship.
That is, in a sense, $\Box$ and $\Diamond$ are interchangeable in $\NR$.
To state this fact precisely, we introduce the following translation $\chi$.

\begin{defn}
We define a translation $\chi$ of $\LB$-formulas recursively as follows: 
\begin{enumerate}
	\item $\chi(A)$ is $A$ if $A$ is a propositional variable or $\bot$, 
	\item $\chi(\neg A)$ is $\neg \chi(A)$, 
	\item $\chi(A \circ B)$ is $\chi(A) \circ \chi(B)$ for $\circ \in \{\land, \lor, \to\}$, 
	\item $\chi(\Box A)$ is $\Diamond \chi(A)$. 
\end{enumerate}
\end{defn}

That is, $\chi(A)$ is obtained from $A$ by replacing every $\Box$ with $\Diamond$. 

\begin{prop}
For any $A \in \MF$, $\NR \vdash A$ if and only if $\NR \vdash \chi(A)$. 
\end{prop}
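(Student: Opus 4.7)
Plan:

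The forward direction $\NR \vdash A \Rightarrow \NR \vdash \chi(A)$ is proved by induction on the length of an $\NR$-proof of $A$. Propositional tautologies are preserved under $\chi$ since $\chi$ acts as a uniform substitution replacing each subformula $\Box B$ by $\neg \Box \neg \chi(B)$, so tautological structure is unaffected. Modus ponens is immediate from the induction hypothesis. For a Necessitation step with premise $B$, the induction hypothesis gives $\NR \vdash \chi(B)$; by propositional logic $\NR \vdash \neg \neg \chi(B)$, and the Rosser rule yields $\NR \vdash \neg \Box \neg \chi(B)$, which equals $\chi(\Box B)$. For a Rosser step with premise $\neg B$, the induction hypothesis gives $\NR \vdash \neg \chi(B)$; applying Necessitation yields $\NR \vdash \Box \neg \chi(B)$, whence $\NR \vdash \neg \neg \Box \neg \chi(B) = \chi(\neg \Box B)$ propositionally.

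For the backward direction $\NR \vdash \chi(A) \Rightarrow \NR \vdash A$, I argue the contrapositive via the completeness theorem (Theorem \ref{Thm:complNR}). Naively applying $\chi$ to a proof of $\chi(A)$ yields $\NR \vdash \chi^2(A)$, but $\chi^2(A)$ differs from $A$ by double negations under $\Box$ that cannot in general be eliminated in $\NR$, so a direct syntactic reduction is blocked. Instead, I proceed semantically: assume $\NR \nvdash A$, so there exist a finite $\Sub(A)$-serial $\N$-model $\mathcal{M} = (W, \{\prec_B\}_{B \in \MF}, \Vdash)$ and $x_0 \in W$ with $x_0 \nVdash A$, and I construct a $\Sub(\chi(A))$-serial $\N$-model $\mathcal{M}^\sigma = (W, \{\prec^\sigma_B\}_{B \in \MF}, \Vdash^\sigma)$ on the same universe with the same atomic valuation, in which $\chi(A)$ fails at $x_0$. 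For each $\Box B \in \Sub(A)$ and each $x \in W$, I set $\prec^\sigma_{\neg \chi(B)}(x)$ to be the singleton $\{y_0\}$ for some $y_0 \in \prec_B(x)$ with $y_0 \Vdash B$ in $\mathcal{M}$ when $x \Vdash \Box B$, and the singleton $\{y_1\}$ for some $y_1 \in \prec_B(x)$ with $y_1 \nVdash B$ when $x \nVdash \Box B$; seriality of $\mathcal{M}$ and the semantics of $\Box$ guarantee that such witnesses exist. All remaining relations $\prec^\sigma_C$ are taken to be the identity on $W$, so that $\mathcal{M}^\sigma$ is globally serial.

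The core step is then to verify, by induction on $B \in \Sub(A)$, that for all $x \in W$, $x \Vdash B$ in $\mathcal{M}$ iff $x \Vdash^\sigma \chi(B)$ in $\mathcal{M}^\sigma$. Propositional cases follow from the induction hypothesis and the commuting of $\chi$ with Boolean connectives. In the key $\Box$-case, $x \Vdash^\sigma \chi(\Box B) = \neg \Box \neg \chi(B)$ iff some $y \in \prec^\sigma_{\neg \chi(B)}(x)$ satisfies $\chi(B)$ in $\mathcal{M}^\sigma$, which by the induction hypothesis is equivalent to some $y \in \prec^\sigma_{\neg \chi(B)}(x)$ satisfying $B$ in $\mathcal{M}$; by the singleton design of $\prec^\sigma_{\neg \chi(B)}(x)$, this is precisely the condition $x \Vdash \Box B$ in $\mathcal{M}$. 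Taking $B = A$ at $x = x_0$ yields $x_0 \nVdash^\sigma \chi(A)$, so by Theorem \ref{Thm:complNR} we get $\NR \nvdash \chi(A)$. The main obstacle is ensuring that this singleton choice of successors behaves coherently for all $\Box B \in \Sub(A)$ simultaneously, including nested modalities; once the construction is laid out and the $\Sub(\chi(A))$-seriality of $\mathcal{M}^\sigma$ is checked against the subformula structure of $\chi(A)$, the remaining verifications are routine.
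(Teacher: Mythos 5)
Your proof is correct, and the forward direction is exactly the paper's argument: closure of $\NR$ under $\dfrac{A}{\Diamond A}$ (double negation plus \textsc{Ros}) and under $\dfrac{\neg A}{\neg \Diamond A}$ (\textsc{Nec} plus double negation). The backward direction, however, takes a genuinely different route. The paper sidesteps the quantifier flip that you confront head-on: rather than building a countermodel for $\chi(A)$, it builds one for $\chi(\chi(A))$, whose modalized subformulas have the shape $\neg\neg\Box\neg\neg\chi(\chi(D))$ and are therefore still universally quantified over successors; it suffices to redirect the relation by setting $\prec^*_{\neg\neg\chi(\chi(D))} := \prec_D$ while keeping every successor set intact, prove the truth lemma $x \Vdash^* \chi(\chi(C)) \iff x \Vdash C$, and then pass from $\NR \nvdash \chi(\chi(A))$ to $\NR \nvdash \chi(A)$ by the contrapositive of the already-established forward direction applied to $\chi(A)$. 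You instead falsify $\chi(A)$ directly, converting the existential successor condition of $\Diamond$ into the universal one of $\Box$ by pruning each relevant successor set $\prec^\sigma_{\neg\chi(B)}(x)$ to a single well-chosen witness; this works, and the witnesses do exist by $\Sub(A)$-seriality. Your route is self-contained in the backward direction (no appeal to the forward direction), but it needs one point made explicit that the paper's version also quietly relies on in its own setting: the case distinction defining $\prec^\sigma_C$ is well-defined only because $\chi$ is injective (which holds since $\chi(C)$ never has $\Box$ as its main connective, so the recursive clauses cannot collide), so that $C = \neg\chi(B)$ determines $\Box B \in \Sub(A)$ uniquely. What the paper's detour through $\chi^2$ buys is a one-line relation redefinition and a truth lemma that preserves successor sets verbatim; what your direct construction buys is independence of the two directions and a countermodel for $\chi(A)$ itself rather than for $\chi(\chi(A))$.
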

\begin{proof}
$(\Rightarrow)$: We prove this implication by induction on the length of proofs in $\NR$. 
It suffices to prove that $\NR$ is closed under the rules $\dfrac{A}{\Diamond A}$ and $\dfrac{\neg A}{\neg \Diamond A}$. 
Suppose $\NR \vdash A$. 
Then, $\NR \vdash \neg \neg A$. 
By the rule \textsc{Ros}, $\NR \vdash \neg \Box \neg A$, that is, $\NR \vdash \Diamond A$. 

Suppose $\NR \vdash \neg A$. 
By \textsc{Nec}, $\NR \vdash \Box \neg A$, and then $\NR \vdash \neg \neg \Box \neg A$. 
This means $\NR \vdash \neg \Diamond A$. 

$(\Leftarrow)$: We prove the contrapositive. 
Suppose $\NR \nvdash A$. 
Then, by Theorem \ref{Thm:complNR}, there exists a serial $\N$-model $\mathcal{M} = (W, \{\prec_B\}_{B \in \MF}, \Vdash)$ and $w \in W$ such that $w \nVdash A$. 
For each $B \in \MF$, let $\prec_B^*$ be the binary relation on $W$ defined as follows:
\[
	\prec_B^* : = \begin{cases} \prec_D & \text{if}\ B\ \text{is of the form}\ \neg \neg \chi(\chi(D)), \\
			\prec_B & \text{otherwise}.
			\end{cases}
\]
Let $\mathcal{M}^*$ be the $\N$-model $(W, \{\prec_B^*\}_{B \in \MF}, \Vdash^*)$ defined by $x \Vdash^* p : \iff x \Vdash p$. 
It is easy to see that the frame of $\mathcal{M}^*$ is also serial. 

\begin{cl}\label{Cl:NR}
For any $\LB$-formula $C$ and $x \in W$, $x \Vdash^* \chi(\chi(C))$ if and only if $x \Vdash C$. 
\end{cl}
\begin{proof}
This claim is proved by induction on the construction of $C$. 
If $C$ is a propositional variable, the claim is trivial because $\chi(\chi(p))$ is exactly $p$. 
The cases of $\bot$ and propositional connectives are easy. 

We prove the case that $C$ is of the form $\Box D$. 
Notice $\prec_{\neg \neg \chi(\chi(D))}^* = \prec_D$. 
By the induction hypothesis, for any $y \in W$, $y \Vdash^* \chi(\chi(D))$ if and only if $y \Vdash D$. 
\begin{align*}
	x \Vdash^* \chi(\chi(\Box D)) & \iff x \Vdash^* \Box \neg \neg \chi(\chi(D)), \\
		& \iff \forall y \in W \bigl(x \prec_{\neg \neg \chi(\chi(D))}^* y \Rightarrow y \Vdash^* \neg \neg \chi(\chi(D)) \bigr), \\
		& \iff \forall y \in W \bigl(x \prec_D y \Rightarrow y \Vdash^* \chi(\chi(D)) \bigr), \\
		& \iff \forall y \in W \bigl(x \prec_D y \Rightarrow y \Vdash D \bigr), \\
		& \iff x \Vdash \Box D. \tag*{\mbox{\qedhere}}
\end{align*}
\end{proof}

Since $w \nVdash A$, we obtain $w \nVdash^* \chi(\chi(A))$ by Claim \ref{Cl:NR}. 
By Theorem \ref{Thm:complNR} again, we obtain $\NR \nvdash \chi(\chi(A))$. 
Since we have already proved the implication $(\Rightarrow)$ of the proposition, we conclude $\NR \nvdash \chi(A)$. 
\end{proof}

\end{document}